\documentclass[reqno]{amsart}

\usepackage[T1]{fontenc}
\usepackage{graphicx}
\usepackage{newtxtext}
\usepackage[varg]{newtxmath}


\usepackage{pinlabel}
\usepackage{amsmath}
\usepackage{amsfonts}
\usepackage{amssymb}
\usepackage{enumerate}
\usepackage{expdlist}


\usepackage{tikz}
\usetikzlibrary{decorations.pathreplacing}
\usetikzlibrary{calc}
\usetikzlibrary{shapes.geometric}
\usetikzlibrary{angles}
\usetikzlibrary{positioning}
\usepackage{empheq}
\usepackage{tkz-euclide}
\usepackage{pgfplots}
\usepgfplotslibrary{ternary}
\usetkzobj{all}

\usepackage[nooneline,hang]{subfigure}

\usepackage[margin=0pt,font=normalsize,singlelinecheck=off, 
labelfont=bf]{caption}
\usepackage{graphicx}
\usepackage{bbold}

\usepackage{floatflt}

\usepackage{mathrsfs}
\usepackage{url}

\DeclareFontFamily{U}{wncy}{}
    \DeclareFontShape{U}{wncy}{m}{n}{<->wncyr10}{}
    \DeclareSymbolFont{mcy}{U}{wncy}{m}{n}
    \DeclareMathSymbol{\ELL}{\mathord}{mcy}{"4C} 

\newcommand{\R}{\mathbb R}
\newcommand{\C}{\mathbb C}

\newcommand{\Z}{\mathbb Z}

\newcommand{\CP}{\mathbb CP}

\newcommand{\cro}{\mathrm{cr}}
\newcommand{\Ha}{\mathbb{H}}

\theoremstyle{plain}
\newtheorem{theorem}{Theorem}[section]

\newtheorem{corollary}[theorem]{Corollary}
\newtheorem{lemma}[theorem]{Lemma}

\theoremstyle{definition}
\newtheorem{remark}[theorem]{Remark}
\newtheorem{definition}[theorem]{Definition}



\title{Conformally 
symmetric triangular lattices and discrete $\vartheta$-conformal maps}


\author{Ulrike B\"ucking}

%
%
%
\date{\today}

\begin{document}

\begin{abstract}
Two immersed triangulations in the plane with the same combinatorics are 
considered 
as preimage and image of a discrete immersion $F$. We compare the cross-ratios
$Q$ and $q$ of corresponding pairs of adjacent triangles in the two
triangulations. If for every pair the arguments of these cross-ratios (i.e.\ 
intersection angles of circumcircles) agree, $F$ is a discrete conformal map 
based on circle patterns. Similarly, if for every pair the absolute values of 
the corresponding cross-ratios $Q$ and $q$
(i.e.\ length cross-ratios) agree, the two triangulations are discretely 
conformally equivalent. We introduce a new notion, discrete 
$\vartheta$-conformal maps, 
which interpolates between these two known definitions of discrete 
conformality for planar triangulations. We prove that there 
exists an associated variational principle.
In particular, discrete $\vartheta$-conformal maps are unique maximizers of
a locally defined concave functional ${\mathcal F}_\vartheta$ in suitable
variables.
Furthermore, we study  conformally 
symmetric triangular lattices which contain examples of discrete 
$\vartheta$-conformal maps.
\end{abstract}

\maketitle

\section{Introduction}

Conformal maps of planar domains, that is, holomorphic maps with non-vanishing 
derivatives, build an important and classical subject in complex analysis which 
has been intensively studied. Conformality of a map $f$ may be characterized 
by the fact that it infinitesimally preserves cross-ratios. In particular,
$f$ also preserves intersection angles and orientation. 
Moreover, the standard metric $g$ of the complex plane is changed conformally 
by $f$ to $\tilde g$, that is $\tilde g=\text{e}^u g$ for some smooth function 
$u$. Note that all these properties are invariant under M\"obius 
transformations.

In the last decades, a growing interest in discrete conformal maps has emerged. 
Thurston first introduced in~\cite{Thu85} planar circle packings, that is 
configurations of touching discs corresponding to a triangulation, as discrete 
conformal maps. This idea has been extended to circle patterns which allow 
intersecting circles with fixed intersection angles, for instance 
Schramm's orthogonal circle patterns~\cite{Sch97}. Many explicit classes of
examples of circle packings or circle patterns which correspond to special 
conformal maps like polynomials, exponential functions, $z^\gamma,\log, 
\text{erf}$, see~\cite{BDS94,Sch97,Bo99,AB00,BH01,BH03,BMS05}, have been 
discovered using integrable structures or other additional properties.
Furthermore, circle patterns (and circle packings) can be obtained from
variational principles for the radii of the circles, 
see~\cite{CV91,Ri94,BS02,KSSch}. Moreover, two circle patterns with the same 
underlying
combinatorics may be considered as discrete conformal map if all 
intersection angles of corresponding pairs of circles for incident vertices agree,
see for example~\cite{Sch97,KSSch,Bue08}.

Another more recent metric approach led to a different notion of discrete 
conformal maps based on discretely conformally equivalent 
triangulations~\cite{BSSp16,Bue17conv}. This notion was introduced by Luo
in~\cite{Luo}. Here, the 
length cross-ratios (i.e\ absolute values of the cross-ratios) agree for 
corresponding pairs of incident triangles. For these discrete conformal maps 
there is only one explicit class of examples known so far, which corresponds to 
exponential functions, see~\cite{WGS15}. Nevertheless, conformally equivalent 
triangulations may also be obtained from a variational principle for the edge 
lengths, see~\cite{SSchP08,BPS13}. Moreover, there is a relation of this 
functional studied in~\cite{BPS13} to a variational principle for circle 
patterns based on edge lengths.
Note that there also exists a variational principle for more general discrete 
conformal changes of triangulations from a metric viewpoint, see~\cite{G11}.

In this article, we propose a new
definition of discrete conformal maps $F:T\to \widehat{T}$ between two 
immersed planar triangulations $T$ and $\widehat{T}$ with the same combinatorics
which generalizes the two known notions 
based on circle patterns and discrete conformal equivalence. Recall that the 
cross-ratio of the four vertices of two incident triangles encode the 
intersection angle of the 
corresponding circumcircles of the triangles as well as the length cross-ratio.
We may add circumcircles to all triangles of the triangulations $T$ and 
$\widehat{T}$. In 
this way we obtain two {\em circle patterns} with the same combinatorics.
Now $F$ is discrete conformal if all intersection angles of these 
circumcircles agree for corresponding pairs of adjacent triangles. In terms 
of cross-ratios, this means that the arguments of the cross-ratios are the same
for all pairs of adjacent triangles. 
Similarly, focusing on the absolute values of the cross-ratios of all 
corresponding pairs of adjacent 
triangles, we could call $F$ discrete conformal if all length cross-ratios
are preserved.
Discrete $\vartheta$-conformality generalize these two notions of discrete 
conformality by including them into the one parameter family of definitions 
which interpolates smoothly between them. Instead of intersections angles or 
length cross-ratios, we preserve a combination of these values for an 
arbitrary, but fixed parameter $\vartheta$. See Section~\ref{secDef} for more
details.
Figure~\ref{figExvartheta} shows some examples.
There is also a relation to discrete holomorphic quadratic differentials 
defined by Lam and Pinkall in~\cite{Lam2016}, see Subsection~\ref{remquaddiff}.

One main result of this paper is the proof in Section~\ref{SecVari} that the
condition for discrete $\vartheta$-conformal maps, see~\eqref{eqdefdiscconf}, 
is variational which is known to hold  for circle patterns~\cite{BS02} and 
conformally equivalent triangulations~\cite{BPS13}. This means that using 
suitable coordinates, discrete $\vartheta$-conformal maps correspond (locally)
to critical points of a functional. Calculating the second derivative, we see 
the particularly nice property that the functional is concave and the Hessian 
is 
related to the cotan-Laplacian similarly as in the case of circle patterns and 
conformally equivalent triangulations. Thus convex optimization may be 
applied for obtaining critical points. Convexity also shows that the solutions 
are unique (up to a similarity transformation). Therefore, in appropriate 
coordinates, the triangulations corresponding to discrete $\vartheta$-conformal 
maps are locally rigid. It remains an open question to 
determine an explicit formula for this functional.

Our considerations in this article are motivated by a class of 
examples for discrete 
$\vartheta$-conformal maps, namely, conformally symmetric triangular lattices,
see Section~\ref{SecEx}. 
These can be considered as discrete exponential functions or discrete Airy 
functions.
We characterize and study this class of examples in Section~\ref{SecConfSym}.

Note that there exist several other approaches for discrete conformal maps.
The earliest investigations 
resulted in the linear theory of discrete holomorphic maps, which also arises 
as suitable linearization of the (nonlinear) notions mentioned above, see for 
example~\cite{BMS05,BS08,Bue17}. Further studies of this approach can for 
example be found in~\cite{F,G16}. Apart from applications in 
numerics~\cite{C78,Sko13,DLS18}, this linear theory has been used for a 
rigorous study of dimers and the $2D$-Ising model in the context of 
probability, see~\cite{Ke02,S10,ChS11,ChS12}. 

\begin{figure}[tb]
\hfill
\begin{tikzpicture}[scale=0.7]
  \coordinate [label={left:$v_i$}] (A) at (0, 0);
  \coordinate [label={right:$v_j$}] (B) at (2cm,0);
  \coordinate [label={above:$v_k$}] (C) at (1cm, 1.732cm);
\coordinate [label={below:$v_l$}] (D) at (1cm, -1.732cm);

\draw [thick] (A) -- (D) -- (B) -- (C) -- (A);

   \tkzCircumCenter(A,B,C)\tkzGetPoint{G1}
    \tkzDrawCircle[dotted](G1,A)
  \tkzCircumCenter(A,B,D)\tkzGetPoint{G2}
    \tkzDrawCircle[dotted](G2,A)
  \draw [thick] (A) -- (B) ;
\end{tikzpicture}
\hfill
\begin{tikzpicture}[scale=0.7]
  \coordinate [label={left:$z_i$}] (A) at (0, 0);
  \coordinate [label={right:$z_j$}] (B) at (2cm,0);
  \coordinate [label={above:$z_k$}] (C) at (1.5cm, 1.732cm);
\coordinate [label={below:$z_l$}] (D) at (1.333cm, -0.81cm);

\draw [thick] (A) -- (D) -- (B) -- (C) -- (A);
  \draw [thick] (A) -- (B) ;
\end{tikzpicture}
\hfill
\begin{tikzpicture}[scale=0.7]
  \coordinate [label={left:$z_i$}] (A) at (0, 0);
  \coordinate [label={right:$z_j$}] (B) at (2.024cm,-0.022);
  \coordinate [label={above:$z_k$}] (C) at (1.697cm, 0.998cm);
\coordinate [label={below:$z_l$}] (D) at (-0.265cm, -1.199cm);

\draw [thick] (A) -- (D) -- (B) -- (C) -- (A); 

   \tkzCircumCenter(A,B,C)\tkzGetPoint{G1}
    \tkzDrawCircle[dotted](G1,A)
  \tkzCircumCenter(A,B,D)\tkzGetPoint{G2}
    \tkzDrawCircle[dotted](G2,A)
  \draw [thick] (A) -- (B) ;
\end{tikzpicture}
\hfill
\begin{tikzpicture}[scale=0.5]
  \coordinate [label={left:$z_i$}] (A) at (0, 0);
  \coordinate [label={right:$z_j$}] (B) at (2cm,0);
  \coordinate [label={above:$z_k$}] (C) at (2.175cm, 1.605cm);
\coordinate [label={below:$z_l$}] (D) at (0.131cm, -2.659cm);

\draw [thick] (A) -- (D) -- (B) -- (C) -- (A);

   \tkzCircumCenter(A,B,C)\tkzGetPoint{G1}
    \tkzDrawCircle[dotted](G1,A)
  \tkzCircumCenter(A,B,D)\tkzGetPoint{G2}
    \tkzDrawCircle[dotted](G2,A)
  \draw [thick] (A) -- (B) ;
\end{tikzpicture}
\hspace{2em}
\hfill
 \caption{Examples of discrete $\vartheta$-conformal functions for two 
incident triangles (from left to right): original configuration of two 
equilateral triangles, 
$\vartheta=0$ (discrete conformal equivalence), $\vartheta=\pi/2$ (circle 
pattern), $\vartheta=\pi/3$ (new notion)}\label{figExvartheta}
\end{figure}
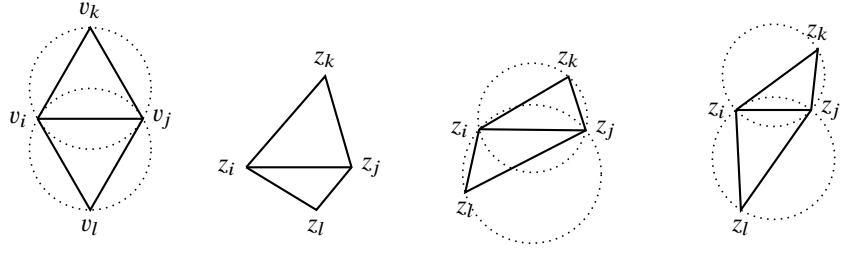

\section{Definitions and basic properties}\label{secDef}

In any discrete setting, the domain of a smooth conformal map is replaced 
by a suitable discrete object. In the following, let $T$ denote a (locally) 
embedded non-degenerate triangulation in the plane. Denote the vertices and 
edges of $T$ by $V$ and $E$ respectively.
 Edges will often be written as $e=[v_i,v_j]\in E$, where $v_i,v_j\in 
V$ are its incident vertices. For triangular faces we use the notation 
$\Delta[v_i,v_j,v_k]$ enumerating the incident vertices with respect to the
(counterclockwise) orientation of $\C$.

As a starting point for discrete conformal maps on $T$, we consider mappings 
$F:T\to\C$ which are continuous, piecewise affine-linear on every triangle, and 
orientation preserving. For every interior vertex, all its adjacent triangles build a {\em flower}. 
 $F$ is called a {\em discrete 
immersion} if for every flower all pairs of image triangles only intersect 
in their common vertex or edge, so the image flower is embedded. The 
immersed image triangulation will be denoted by $\widehat{T}=F(T)$ with 
vertices 
$\widehat{V}=F(V)$ and edges $\widehat{E}=F(E)$.
Given such a discrete immersion, when should it be called discrete conformal? 
In this article, we focus on the fact, that a smooth conformal map 
infinitesimally 
preserves cross-ratios. For the two triangulations $T$ and $\widehat{T}$ we can 
compute the 
cross-ratios of adjacent triangles in $T$ and $\widehat{T}$, respectively. We 
will use the 
following definition of the \emph{cross-ratio of four points} $z_1,z_2,z_3,z_4\in\C$: 
\begin{equation*}
\cro(z_1,z_2,z_3,z_4)= \frac{(z_1-z_2)}{(z_2-z_3)} \frac{(z_3-z_4)}{(z_4-z_1)}.
\end{equation*} 
For every interior edge $[v_i,v_j]$ in $T$ with adjacent triangles 
$\Delta[v_i,v_j,v_k]$ and $\Delta[v_i,v_l,v_j]$ as in 
Figure~\ref{Fig2Triang}, we define
\begin{align}
Q([v_i,v_j])&:=
\cro(v_i,v_l,v_j, v_k),\label{eqdefQ} \\
q([v_i,v_j])&:=
\cro(z_i,z_l,z_j,z_k),\label{eqdefq} 
\end{align}
where $z_m=F(v_m)$ for $m\in\{i,j,k,l\}$.
These cross-ratios are related to the geometric configuration of two incident 
triangles as
$q([v_i,v_j])=|q([v_i,v_j])|\text{e}^{i\varphi}$, where $|q([v_i,v_j])|= 
|\cro(z_i,z_l,z_j,z_k)|$ is also called {\em length cross-ratio} and 
$\varphi\in(0,\pi)$ is the interior intersection angle of the two circumcircles 
of the triangles $\Delta[z_i,z_l,z_j]$ and 
$\Delta[z_i,z_j,z_k]$ in $\widehat{T}$.
Note that for two embedded, non-degenerate, counterclockwise oriented triangles the logarithm of 
$q$ (and analogously $\log Q$) is well defined with values in $\R+i(0,\pi)$ by taking $\log 
(q([v_i,v_j])) := \log \frac{z_1-z_2}{z_3-z_2} +\log\frac{z_3-z_4}{z_1-z_4}$.
Furthermore, the values of $q$ characterize the image 
triangulation $\widehat{T}$ up to M\"obius transformations, see also
Lemma~\ref{theoq} below.
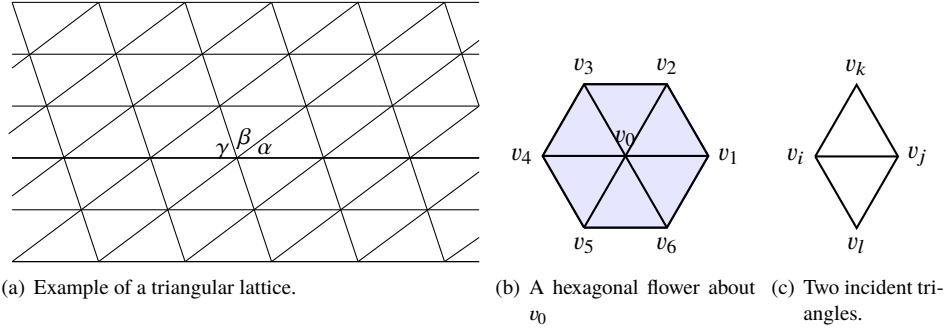
\begin{figure}[t]
\hfill
\subfigure[Example of a triangular lattice.]{\label{FigTiling}

\ifx\XFigwidth\undefined\dimen1=0pt\else\dimen1\XFigwidth\fi
\divide\dimen1 by 12264
\ifx\XFigheight\undefined\dimen3=0pt\else\dimen3\XFigheight\fi
\divide\dimen3 by 6774
\ifdim\dimen1=0pt\ifdim\dimen3=0pt\dimen1=4143sp\dimen3\dimen1
  \else\dimen1\dimen3\fi\else\ifdim\dimen3=0pt\dimen3\dimen1\fi\fi
\tikzpicture[x=+\dimen1, y=+\dimen3,scale=0.23]
\clip(-102,-8562) rectangle (12162,-1788);
\tikzset{inner sep=+0pt, outer sep=+0pt}
\draw (0,-8550)--(12150,-8550);
\draw (0,-7200)--(12150,-7200);
\draw (0,-4500)--(12150,-4500);
\draw (0,-3150)--(12150,-3150);
\draw (0,-1800)--(12150,-1800);
\draw (0,-8550)--(9000,-1800);
\draw (0,-8550)--(12150,-8550);
\draw (2250,-8550)--(0,-1800);
\draw (2250,-8550)--(11250,-1800);
\draw (6750,-8550)--(12150,-4500);
\draw (11250,-8550)--(9000,-1800);
\draw (9000,-8550)--(6750,-1800);
\draw (6750,-8550)--(4500,-1800);
\draw (4500,-8550)--(2250,-1800);
\draw (6750,-1800)--(0,-6840);
\draw (4500,-1800)--(-90,-5220);
\draw (2250,-1800)--(0,-3510);
\draw (11250,-1800)--(12150,-4500);
\draw (4500,-8550)--(12150,-2790);
\draw (9000,-8550)--(12150,-6210);
\draw (11250,-8550)--(12150,-7920);
\draw (0,-5850)--(12150,-5850);
\draw (0,-5850)--(12150,-5850);
\draw (0,-5850)--(12150,-5850);
\pgftext[base,left,at=\pgfqpointxy{6390}{-5760}] {\fontsize{40}{26.4}\normalfont $\alpha$};
\pgftext[base,left,at=\pgfqpointxy{5850}{-5480}] {\fontsize{40}{26.4}\normalfont $\beta$};
\pgftext[base,left,at=\pgfqpointxy{5300}{-5760}] {\fontsize{40}{26.4}\normalfont $\gamma$};
\endtikzpicture%
}
\hfill
\subfigure[A hexagonal flower about $v_0$]{\label{Figdefalpha}
\begin{tikzpicture}[thick, scale=0.55]
 
  \coordinate [label={right:$v_1$}] (B) at (2cm,0);
  \coordinate [label={above:$v_2$}] (C) at (1cm, 1.732cm);
\coordinate [label={below:$v_6$}] (D) at (1cm, -1.732cm);
\coordinate [label={left:$v_4$}] (E) at (-2cm,0);
  \coordinate [label={above:$v_3$}] (F) at (-1cm, 1.732cm);
\coordinate [label={below:$v_5$}] (G) at (-1cm, -1.732cm);

\draw [thick] (G) -- (D) -- (B) -- (C) -- (F) -- (E) -- (G); 
\filldraw[fill=blue!10!white] (G) -- (D) -- (B) -- (C) -- (F) -- (E) -- (G);
 \coordinate [label={above:$v_0$}] (A) at (0, 0);
  \draw [thick] (A) -- (B) ;
\draw [thick] (A) -- (C) ;
\draw [thick] (A) -- (D) ;
\draw [thick] (A) -- (E) ;
\draw [thick] (A) -- (F) ;
\draw [thick] (A) -- (G) ;
\end{tikzpicture}
}
\hfill
\subfigure[Two incident triangles.]{\label{Fig2Triang}
\begin{tikzpicture}[scale=0.55]
  \coordinate [label={left:$v_i$}] (A) at (0, 0);
  \coordinate [label={right:$v_j$}] (B) at (2cm,0);
  \coordinate [label={above:$v_k$}] (C) at (1cm, 1.732cm);
\coordinate [label={below:$v_l$}] (D) at (1cm, -1.732cm);

\draw [thick] (A) -- (D) -- (B) -- (C) -- (A);
  \draw [thick] (A) -- (B) ;
\end{tikzpicture}
}
\hspace{2em}
\hfill
\caption{Lattice triangulation $TL_\C$ of the plane with congruent 
triangles, a flower of $TL_\C$ about $v_0$, and two incident 
triangles.}\label{FigRegTile}
\end{figure}

Similarly as for smooth conformal maps, we could demand that $F$ preserves all 
cross-ratios, so $q\equiv Q$. But then $\widehat{T}$ is only the image of $T$ by 
a M\"obius transformation. Therefore, it seems reasonable that only 
``half'' of the cross-ratios remains unchanged, in particular:
\begin{definition}\label{discconfdef}
 The discrete immersion $F:T\to\C$ with image triangulation 
$\widehat{T}$ 
is called {\em discrete $\vartheta$-conformal} for a constant 
$\vartheta\in[0,\pi/2]$ if for all interior edges $[v_i,v_j]$ with 
adjacent triangles 
$\Delta[v_i,v_l,v_j]$ and $\Delta[v_i,v_j,v_k]$ there holds
\begin{equation}\label{eqdefdiscconf}
\text{Re}[\text{e}^{-i\vartheta}\log(Q([v_i,v_j]))]= 
\text{Re}[\text{e}^{-i\vartheta}\log(q([v_i,v_j]))],
\end{equation}
where we assume the values of the logarithm to be in $\R+i(0,2\pi)$.
\end{definition}

Note that this definition contains
 two known notions of discrete conformality based on circle patterns and 
discrete conformal equivalence.
\begin{itemize}
 \item For $\vartheta=0$
the absolute values of the cross-ratios (length cross-ratios) for 
adjacent triangles agree, that is $|Q([v_i,v_j])|=|q([v_i,v_j])|$ 
holds for all interior edges $[v_i,v_j]$. Therefore, the triangulations $T$ and 
$\widehat{T}$ are {\em discretely conformally 
equivalent}, see for example~\cite[Prop.~2.3.2]{BPS13}. 
\item For $\vartheta=\pi/2$ the arguments of the 
cross-ratios for adjacent triangles agree, that is $\arg Q([v_i,v_j])= \arg q([v_i,v_j])$. These 
equal the intersection angles 
of the corresponding circumcircles. Adding these circumcircles
for all triangles in $T$ and in $\widehat{T}$, we obtain a discrete conformal 
map between two {\em circle patterns}.
\end{itemize}
Further properties of discrete $\vartheta$-conformal maps will be studied in 
Sections~\ref{SecExT} and~\ref{SecVari}.

Our considerations in this article are motivated by the study of a class of 
examples, where the cross-ratios defined in~\eqref{eqdefq} are 
locally constant. 
To this end, we restrict ourselves to the 
case where the triangulation $T$ is a (part of a) {\em triangular lattice} 
$TL_{\C}$, that is, a lattice triangulation of the whole complex plane $\C$ 
with congruent triangles, see Figure~\ref{FigTiling}. We denote the 
triangulation $TL=T$ in this case in order to emphasize the lattice structure. 
For some considerations we need the actual geometric data of the lattice 
$TL_{\C}$, i.e.\ the angles 
$\alpha,\beta,\gamma\in (0,\pi)$ such that $\alpha+\beta+\gamma=\pi$, which is
specified according to the notation in Figure~\ref{FigTiling}.
Obviously, the cross-ratios $Q$ are constant on parallel edges (as $TL$ is part 
of a lattice). 

We start with a useful property of the cross-ratio function.
To this end, we introduce some notation.
A vertex $v_0\in V$ is called {\em interior vertex} if it is an interior 
point of the union of its adjacent closed triangular faces, see 
Figure~\ref{Figdefalpha}.
We call an interior vertex $v_0$ of ${T}$ together 
with its incident triangles (including their vertices and edges) a {\em 
flower}. The vertex $v_0$ is called the {\em center of the flower} and its 
neighbors will 
frequently be enumerated corresponding to the cyclic order of the 
corresponding vertices of $T$.

\begin{lemma}\label{theoq}
Let $TL$ be a part of a triangular lattice.
If a function $q:EL_{int}\to\C\setminus\R_{\geq 0}$ originates from a 
discrete immersion of $TL$ then
for every interior vertex $v_0\in VL$ with cyclic 
neighbors $v_1,\dots, v_6\in V$ and $q_k=q([v_0,v_k])$ there holds
\begin{align}
 &\sum_{k=1}^6\arg(q_k)=4\pi\quad \Rightarrow\quad q_1q_2q_3q_4q_5q_6=1, 
\label{eqq1}\\
&1-q_1 +q_1q_2 -q_1q_2q_3 +q_1q_2q_3q_4 -q_1q_2q_3q_4q_5=0.\label{eqq2}
\end{align}
\end{lemma}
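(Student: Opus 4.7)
The plan is to normalize $z_0 = 0$ (allowed by translation-invariance of cross-ratios), set $w_k := z_k$ and $u_k := w_{k-1}/w_k$ with all indices cyclic mod $6$, and substitute into \eqref{eqdefq} to obtain the compact formula
\[
q_k \;=\; -\,u_k\,\frac{1-u_{k+1}}{1-u_k}.
\]
This is a short direct calculation and is the workhorse of the proof. Note also that $\prod_{k=1}^{6} u_k = 1$ by cyclic cancellation.

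For \eqref{eqq1}, the product identity $\prod_{k=1}^{6} q_k = 1$ falls out immediately: the sign contributes $(-1)^6 = 1$, the $u_k$-product equals $1$, and $\prod_{k=1}^{6}(1-u_{k+1})/(1-u_k) = 1$ telescopes around the cycle. Having established $\prod q_k = 1$, the sum $\sum_{k=1}^{6}\arg q_k$ lies in $2\pi\Z$, and to pin down the value $4\pi$ I would use the geometric reading of $\arg q_k$. By the tangent--chord (alternate-segment) theorem applied to the two circumcircles sharing the chord $[z_0,z_k]$, the interior intersection angle $\arg q_k$ equals the sum of the two inscribed angles at the non-central vertices opposite to that chord:
\[
\arg q_k \;=\; \angle z_0 z_{k-1} z_k \,+\, \angle z_0 z_{k+1} z_k.
\]
Running $k$ through $1,\dots,6$, every angle of every triangle in the image flower that is \emph{not} at $z_0$ is counted exactly once. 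Since the six triangles contribute total angle sum $6\pi$ and the immersion property forces the six angles at $z_0$ to add to $2\pi$, the non-central angles add to $6\pi - 2\pi = 4\pi$.

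For \eqref{eqq2}, set $P_j := q_1\cdots q_j$ and $U_j := u_1\cdots u_j$ (with $P_0 = U_0 := 1$). The compact formula for $q_k$ makes the sign and the $(1-u_\cdot)$-factors telescope within each partial product, giving $P_j = (-1)^j(1-u_{j+1})/(1-u_1)\cdot U_j$. Hence
\[
\sum_{j=0}^{5}(-1)^j P_j \;=\; \frac{1}{1-u_1}\sum_{j=0}^{5}(1-u_{j+1})\,U_j \;=\; \frac{1}{1-u_1}\sum_{j=0}^{5}(U_j - U_{j+1}) \;=\; \frac{U_0 - U_6}{1-u_1} \;=\; 0,
\]
using $U_6 = 1$ and $u_1 \neq 1$ (the latter because $w_6 \neq w_1$ in an embedded flower). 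The only subtle step I anticipate is the argument-sum calculation: identifying $\arg q_k$ with the sum of two inscribed angles and checking that the branch conventions ($\mathrm{Im}\log q_k \in (0,\pi)$, available because the image flower is embedded) line up so that the geometric identity is literally an equality of real numbers rather than just a congruence modulo $2\pi$. Everything else is a substitution or a telescoping sum.
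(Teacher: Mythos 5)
Your proposal is correct, and it reaches the two identities by a different route than the paper. You normalize $z_0=0$, express $q_k=\cro(z_0,z_{k-1},z_k,z_{k+1})$ through the consecutive ratios $u_k=z_{k-1}/z_k$ as $q_k=-u_k(1-u_{k+1})/(1-u_k)$ (this formula checks out, and the needed non-vanishing $u_k\neq 1$, $z_k\neq z_0$ holds because the image flower is embedded and non-degenerate), and then both $\prod_k q_k=1$ and the alternating-sum identity~\eqref{eqq2} follow by telescoping; the argument-sum statement is handled by counting angles in the image flower. The paper instead applies a M\"obius transformation sending $z_0\to\infty$, so that $q_k=-(\tilde z_{k+1}-\tilde z_k)/(\tilde z_k-\tilde z_{k-1})$ becomes a ratio of consecutive edge vectors of a hexagon, and reads off~\eqref{eqq1} and~\eqref{eqq2} as that hexagon's closing conditions. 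The two arguments are close cousins — your substitution $u_k=z_{k-1}/z_k$ is essentially the inversion $z\mapsto 1/z$ in disguise, and your telescoping sum is the coordinate form of ``the edge vectors sum to zero'' — but yours is a self-contained algebraic verification, while the paper's is a geometric one-liner in which the $4\pi$ condition appears as the interior angle sum of the embedded hexagon. Your version in fact proves slightly more than the stated implication~\eqref{eqq1}: the product identity holds unconditionally, and your angle count (each non-central angle of the six image triangles appears exactly once, the angles at $z_0$ summing to $2\pi$ by embeddedness) establishes $\sum_k\arg q_k=4\pi$ as well.

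Two small quibbles. First, the identity $\arg q_k=\angle z_0z_{k-1}z_k+\angle z_0z_{k+1}z_k$ needs no tangent--chord theorem: it is immediate from splitting the cross-ratio into the two factors $(z_0-z_{k-1})/(z_k-z_{k-1})$ and $(z_k-z_{k+1})/(z_0-z_{k+1})$, whose arguments are exactly those interior angles; the circle-geometry reading is only an interpretation. Second, your branch convention should be $\mathrm{Im}\log q_k\in(0,2\pi)$ (as in Definition~\ref{discconfdef}), not $(0,\pi)$: the sum of the two opposite angles can exceed $\pi$ even for an embedded flower, and with the $(0,2\pi)$ convention your identity is an honest equality of real numbers, which is all the angle count requires.
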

\begin{proof}
Given an embedded flower of $\widehat{TL}$ about $z_0$, we
add circumcircles to the triangles and then apply a M\"obius transformation to 
this configuration which maps $z_0$ to $\infty$ as indicated in 
Figure~\ref{figFlowerMoeb}.
As the M\"obius transformation does not change the values of the corresponding 
cross-ratios $q_k$, we easily 
identify equations~\eqref{eqq1} and~\eqref{eqq2} as the closing conditions for 
the image polygon. 
\end{proof}
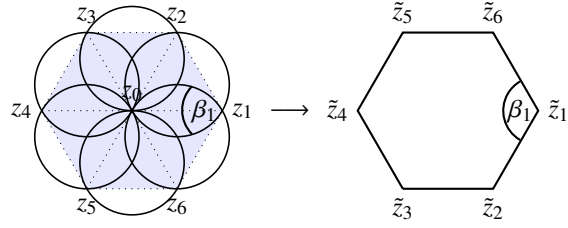
\begin{figure}
\begin{center}
\begin{tikzpicture}[scale=0.6]
  \coordinate [label={right:$z_1$}] (B) at (2cm,0);
  \coordinate [label={above:$z_2$}] (C) at (1cm, 1.732cm);
\coordinate [label={below:$z_6$}] (D) at (1cm, -1.732cm);
\coordinate [label={left:$z_4$}] (E) at (-2cm,0);
  \coordinate [label={above:$z_3$}] (F) at (-1cm, 1.732cm);
\coordinate [label={below:$z_5$}] (G) at (-1cm, -1.732cm);

\filldraw[dotted, fill=blue!10!white] (G) -- (D) -- (B) -- (C) -- (F) -- (E) -- 
(G);
 \coordinate [label={above:$z_0$}] (A) at (0, 0);
  \draw [dotted] (A) -- (B) ;
\draw  [dotted] (A) -- (C) ;
\draw [dotted] (A) -- (D) ;
\draw [dotted] (A) -- (E) ;
\draw [dotted] (A) -- (F) ;
\draw [dotted] (A) -- (G) ;

    \tkzCircumCenter(A,B,C)\tkzGetPoint{G1}
    \tkzDrawCircle(G1,A)
  \tkzCircumCenter(A,B,D)\tkzGetPoint{G2}
    \tkzDrawCircle(G2,A)
  \tkzCircumCenter(A,C,F)\tkzGetPoint{G3}
    \tkzDrawCircle(G3,A)
  \tkzCircumCenter(A,F,E)\tkzGetPoint{G4}
    \tkzDrawCircle(G4,A)
  \tkzCircumCenter(A,E,G)\tkzGetPoint{G5}
    \tkzDrawCircle(G5,A)
  \tkzCircumCenter(A,G,D)\tkzGetPoint{G6}
    \tkzDrawCircle(G6,A)

\draw (1.6cm,0) node {$\beta_1$};
\draw[thick]  (1.33cm,-0.52cm) arc (225:135:0.75cm);

\draw (3.5cm,0) node {$\longrightarrow$};

 \coordinate [label={right:$\tilde{z}_1$}] (B1) at (9cm,0);
  \coordinate [label={above:$\tilde{z}_6$}] (C1) at (8cm, 1.732cm);
\coordinate [label={below:$\tilde{z}_2$}] (D1) at (8cm, -1.732cm);
\coordinate [label={left:$\tilde{z}_4$}] (E1) at (5cm,0);
  \coordinate [label={above:$\tilde{z}_5$}] (F1) at (6cm, 1.732cm);
\coordinate [label={below:$\tilde{z}_3$}] (G1) at (6cm, -1.732cm);
\draw [thick] (G1) -- (D1) -- (B1) -- (C1) -- (F1) -- (E1) -- (G1);

\draw (8.6cm,0) node {$\beta_1$};
\draw[thick]  (8.6cm,-0.65cm) arc (240:120:0.75cm);
\end{tikzpicture}
\end{center}
\caption{Mapping the circumcircles of a flower by a M\"obius 
transformation with $z_0\mapsto\infty$}\label{figFlowerMoeb}
\end{figure}

\begin{remark}\label{remGen}
 Lemma~\ref{theoq} is formulated for flowers consisting of six triangles 
 because we are particularly interested in this combinatorics for 
 conformally symmetric triangular lattices. But 
equations~\eqref{eqq1} and~\eqref{eqq2} and their proof can easily 
be generalized for flowers in arbitrary immersed triangulations ${T}$.
\end{remark}

\begin{remark}\label{remq}
Given a cross-ratio function $q$ on the edges of a flower of $\widehat{TL}$ 
incident to $v_0$, 
note that equations~\eqref{eqq1} and~\eqref{eqq2} are not sufficient to define 
a discrete embedding of this flower because the interiors of different 
triangles may intersect. Nevertheless, from the values of $q$ we can 
always build a corresponding hexagon as in Figure~\ref{figFlowerMoeb} (right). 
But this hexagon is possibly not embedded.
Applying a M\"obius transformation which maps $\infty$ to a finite point and 
maps none of the vertices of the hexagon to $\infty$, we obtain a configuration 
of six triangles. These are the images of a piecewise linear map of a flower of 
$TL_\C$. By construction, this configuration is unique up to M\"obius 
transformations. In case that the resulting configuration is an embedded 
flower, we obtain a discrete immersion of this flower. Given further values of 
the cross-ratio $q$ on the boundary edges of the flower, these determine new 
vertices and triangles. If all these triangles are locally embedded about 
every flower, 
we can continue this procedure and finally obtain a discrete immersion from the values 
of the cross-ratio $q$ on the edges.
\end{remark}

\section{Conformally symmetric triangular lattices}\label{SecConfSym}

In this section we study conformally symmetric discrete immersions and exploit 
ideas used in~\cite{BH01} for conformally symmetric circle packings. We choose 
the cross-ratios of incident triangles as suitable 
M\"obius invariant parameters. Note that throughout this section we will
work with (parts of) triangular lattices $TL$ instead of a general 
triangulation~$T$.

The following lemma gives examples of cross-ratio functions as considered in 
Lemma~\ref{theoq} which always lead to discrete immersions.

\begin{lemma}\label{lemDoyle}
Let $q_1,q_2,q_3\in\Ha =\{z\in\C:\text{Im}(z)>0\}$ be three numbers such that 
$\arg(q_k)\in(0,\pi)$ and $\arg(q_1)+\arg(q_2)+\arg(q_3)=2\pi$. Define a 
function $q:E\to \{q_1,q_2,q_3\}$ on the edges of a triangular lattice 
such that $q$ assumes the same value on parallel edges of $TL$. Then there exists 
a discrete immersion $\widehat{TL}$, which will be called {\em 
generalized Doyle spiral}, such that $q$ is the corresponding cross-ratio 
function.
\end{lemma}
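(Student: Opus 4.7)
The plan is to construct $\widehat{TL}$ explicitly via a multiplicative ansatz that respects the translation symmetry of $TL$. Choose a basis $e_1,e_2$ of the lattice of vertices and try
\[
F(me_1+ne_2)=c_0\,\alpha^m\beta^n,\qquad \alpha,\beta\in\C^*,\ c_0\neq 0.
\]
The two generating lattice translations are then intertwined under $F$ with the M\"obius transformations $z\mapsto\alpha z$ and $z\mapsto\beta z$, and by M\"obius invariance of the cross-ratio the associated function $q$ is automatically constant on each parallel class of edges. It therefore suffices to choose $(\alpha,\beta)$ so that the three cross-ratio values produced by the ansatz match the prescribed $q_1,q_2,q_3$.

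First I would compute $q_j(\alpha,\beta)$, $j=1,2,3$, from one base flower. A direct calculation yields two rational equations in $(\alpha,\beta)$ together with the identity $q_1 q_2 q_3\equiv 1$, so the system to solve is effectively two complex equations in the two unknowns $(\alpha,\beta)$. The identity $q_1 q_2 q_3=1$ for the prescribed data is forced by the hypotheses: applied to a hexagonal flower in which each parallel class occurs twice, Lemma~\ref{theoq} gives $(q_1q_2q_3)^2=1$, while $\arg q_1+\arg q_2+\arg q_3=2\pi$ excludes the value $-1$. Solving then reduces to root extraction, and the branch selected by $\arg q_k\in(0,\pi)$ yields a distinguished pair $(\alpha,\beta)\in(\C^*)^2$.

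The remaining task is to verify that $F$ is a discrete immersion. Non-degeneracy and positive orientation of each image triangle follow directly from $q_k\in\Ha$, since $\arg q_k\in(0,\pi)$ is the interior intersection angle of the two adjacent circumcircles. Because $F$ intertwines lattice translations with M\"obius transformations, the embedding condition at every flower reduces to one base flower. As in the proof of Lemma~\ref{theoq}, I would apply the M\"obius transformation sending the center to $\infty$, so that the flower becomes a hexagon whose consecutive edge vectors satisfy $d_k=-q_k d_{k-1}$. The turning angles are then $\arg(-q_k)=\arg q_k-\pi\in(-\pi,0)$ and sum to $-2\pi$, making the hexagon strictly convex and hence embedded; inverting the M\"obius map yields the embedded flower.

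The main obstacle is precisely this final convexity/embeddedness step: the algebraic closing conditions~\eqref{eqq1},~\eqref{eqq2} alone only guarantee that the hexagon closes up, not that it is non-self-intersecting. Combining the sign information from $\arg q_k\in(0,\pi)$ with the sum condition $\arg q_1+\arg q_2+\arg q_3=2\pi$ is what upgrades the algebraic existence of $(\alpha,\beta)$ to a genuine immersion of the whole lattice.
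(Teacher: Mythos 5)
Your spiral ansatz $F(me_1+ne_2)=c_0\,\alpha^m\beta^n$ is a genuinely different route from the paper's (which never solves for the multipliers: it builds the hexagon directly from the data, inverts in its centre of symmetry to obtain an embedded conformally symmetric flower, and propagates by that symmetry as in Remark~\ref{remq}), and your reduction of the immersion property to one base flower via the intertwining similarities $z\mapsto\alpha z$, $z\mapsto\beta z$ is sound. However, there are two genuine gaps. First, your derivation of $q_1q_2q_3=1$ is circular: Lemma~\ref{theoq} applies only to a cross-ratio function that \emph{already} originates from a discrete immersion, which is exactly what you are trying to construct. The stated hypotheses fix only $\arg(q_1q_2q_3)$, not $|q_1q_2q_3|$; for instance $q_1=2\mathrm{e}^{2\pi i/3}$, $q_2=q_3=\mathrm{e}^{2\pi i/3}$ satisfies them with $q_1q_2q_3=2$. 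The identity $q_1q_2q_3=1$ is an additional \emph{necessary} condition on the data (around any immersed flower one has $q_k=-d_k/d_{k-1}$ for the hexagon edges, so $\prod_{k=1}^6 q_k=1$ and hence $(q_1q_2q_3)^2=1$), implicitly assumed by the paper --- compare Lemma~\ref{lempart}, where $|abc|=1$ appears explicitly, and the remark after the quadrilateral construction that the three values multiply to $1$. You must state it as a hypothesis, not ``force'' it from the given ones; without it your system for $(\alpha,\beta)$ has no solution at all, since the ansatz always produces product $1$. Relatedly, ``solving reduces to root extraction'' is asserted rather than shown: you need to prove that the rational system expressing two of the $q_k$ in terms of $(\alpha,\beta)$ is solvable for every admissible datum and that the selected branch is the geometrically correct one.

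Second, the final step fails as written: convexity of the hexagon does not imply that the straight-edge flower is embedded after ``inverting the M\"obius map'', because a M\"obius transformation does not carry the straight hexagon edges to the straight edges $[z_k,z_{k+1}]$ of the flower. Writing the map sending the centre to $\infty$ as $M$, what one actually needs is that $M(\infty)$ lies in the open interior of the hexagon; then the angles of the flower triangles at the centre equal the angles subtended by the hexagon edges at $M(\infty)$, hence lie in $(0,\pi)$ and sum to $2\pi$, which gives embeddedness. The paper secures this by \emph{choosing} the M\"obius transformation --- the hexagon is centrally symmetric (from $q_{k+3}=q_k$ and $q_1q_2q_3=1$ one gets $d_{k+3}=-d_k$), and inverting in its centre of symmetry puts $M(\infty)$ at that interior point, simultaneously producing the symmetry used to extend over the lattice. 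In your approach the flower is rigidly determined by $(\alpha,\beta)$, so there is no such freedom: you must verify, for your chosen branch, that $M(\infty)$ is interior (equivalently, that the base flower is embedded), and this verification is missing. Likewise, your claim that non-degeneracy and positive orientation ``follow directly from $q_k\in\Ha$'' presupposes the circumcircle interpretation of $\arg q_k$, which is only available once the configuration is already known to be immersed.
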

\begin{proof}
By construction, $q$ satisfies the 
necessary conditions~\eqref{eqq1} and~\eqref{eqq2} for every flower.
Furthermore, the hexagon built corresponding to the values of $q$ on a flower 
is symmetric and also embedded as $q_1,q_2,q_3\in\Ha$. Therefore, there exists 
a M\"obius transformation which maps this hexagon to an embedded flower. In 
particular, we can take an inversion which maps $\infty$ to the intersection 
point of the diagonals of the symmetric hexagon. Then the resulting flower is 
still symmetric in the following sense: for pair of incident triangles there 
exists a similarity transformation which maps this pair onto the opposite pair 
of incident triangles (for example $\Delta[z_0,z_1,z_2]\cup\Delta[z_0,z_2,z_3]$ 
onto $\Delta[z_0,z_4,z_5]\cup\Delta[z_0,z_5,z_6]$ in the notation of 
Figure~\ref{figFlowerMoeb}). Therefore, if we start with such a flower we can 
continue and use the values of $q$ to determine further images of vertices as 
indicated in Remark~\ref{remq}. By symmetry, all flowers will be similar and 
embedded.
\end{proof}

Doyle spirals carry a 
lot of symmetry which can be expressed by the fact, that 
the cross-ratios are constant for all parallel edges of $TL$. This is 
in fact a special case of
a {\em conformally symmetric triangular lattice} $\widehat{TL}$ which only 
contains conformally symmetric flowers defined as follows.
 \begin{definition}\label{DefM}
  A flower of $\widehat{TL}$ with center $y_0\in \widehat{VL}$ and incident
vertices $z_1,\dots,z_6\in \widehat{VL}$ 
in cyclic order is called {\em conformally symmetric} if there is an involutive 
M\"obius transformation $M$ with fixed point $y_0$ and such that
\begin{equation}
 M(z_k)=z_{k+3}\ \text{ for } k\ (\bmod 6).
\end{equation}
 \end{definition}

The M\"obius invariant notion of conformal symmetry can locally be  
characterized as follows.

\begin{theorem}\label{theoconfsym}
 For any flower in $\widehat{TL}$ about $z_0\in\widehat{V}$ 
with neighbors $z_1,\dots, z_6\in\widehat{V}$ in cyclic order the following 
statements are  equivalent.
\begin{enumerate}[(i)]
 \item The flower is conformally symmetric.
\item  Opposite cross-ratios agree, that is $q([v_0,v_k])=q([v_0,v_{k+3}])$ for 
$k=1,2,3$.
\item The circles/lines $C_k=C_k(z_k,z_0,z_{k+3})$, 
$k=1,2,3$, through $z_k,z_0,z_{k+3}$ have a second common intersection point 
$X\in\C\cup\{\infty\}$, apart from $z_0$, which satisfies 
$\cro(z_k,z_0,z_{k+3},X)=-1$.
\end{enumerate}
\end{theorem}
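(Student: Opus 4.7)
The strategy is to M\"obius-normalize the flower by an auxiliary transformation $M_0$ sending $z_0\mapsto\infty$, as in the proof of Lemma~\ref{theoq}, and to write $\tilde z_k=M_0(z_k)$. The flower then becomes a planar hexagon $\tilde z_1,\dots,\tilde z_6$, and since all three conditions (i)--(iii) are M\"obius-invariant, the plan is to show that each of them is equivalent to the central symmetry of this hexagon about some centre $\tilde X$, i.e.\ $\tilde z_k+\tilde z_{k+3}=2\tilde X$ for $k=1,2,3$.

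First I would dispose of (i) and (iii) together. Any involutive M\"obius transformation fixing $\infty$ is an affine involution, hence either the identity or a point reflection $z\mapsto 2\tilde X-z$; thus (i) is equivalent to central symmetry of the hexagon. For (iii), each circle $C_k$ passes through $z_0$ and therefore becomes, in the normalized picture, the straight line through $\tilde z_k$ and $\tilde z_{k+3}$. A direct computation yields
\begin{equation*}
\cro(\tilde z_k,\infty,\tilde z_{k+3},\tilde X)=\frac{\tilde X-\tilde z_{k+3}}{\tilde X-\tilde z_k},
\end{equation*}
which equals $-1$ precisely when $\tilde X$ is the midpoint of the segment $\tilde z_k\tilde z_{k+3}$. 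Hence (iii) says that the three main diagonals share a common midpoint $\tilde X$, which is again central symmetry, giving (i)$\,\Leftrightarrow\,$(iii) immediately.

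The main content is (i)$\,\Leftrightarrow\,$(ii). Writing $w_k=\tilde z_{k+1}-\tilde z_k$ (indices mod $6$) and using M\"obius invariance of the cross-ratio, one computes $q_k=\cro(\infty,\tilde z_{k-1},\tilde z_k,\tilde z_{k+1})=-w_k/w_{k-1}$. Setting $\lambda_k:=w_k/w_{k-1}$, condition (ii) reads $\lambda_k=\lambda_{k+3}$ for $k=1,2,3$. Iterating $w_k=\lambda_k w_{k-1}$ around the hexagon and using this periodicity gives $w_{k+3}=\lambda_1\lambda_2\lambda_3\,w_k$ for all $k$, and one further revolution forces $(\lambda_1\lambda_2\lambda_3)^2=1$. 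The branch $\lambda_1\lambda_2\lambda_3=+1$ gives $w_{k+3}=w_k$; combined with the automatic closure $\sum_{k=1}^6 w_k=0$ this yields $w_1+w_2+w_3=0$, so $\tilde z_4=\tilde z_1$, contradicting that $F$ is a discrete immersion. The remaining branch $\lambda_1\lambda_2\lambda_3=-1$ gives $w_{k+3}=-w_k$, which telescopes to $\tilde z_1+\tilde z_4=\tilde z_2+\tilde z_5=\tilde z_3+\tilde z_6$, the central symmetry of (i). The converse (i)$\,\Rightarrow\,$(ii) follows by substituting $\tilde z_{k+3}=2\tilde X-\tilde z_k$ into $q_k=-w_k/w_{k-1}$.

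The main obstacle I anticipate is this final branch analysis: one must justify carefully that the case $\lambda_1\lambda_2\lambda_3=+1$ is genuinely incompatible with $F$ being a non-degenerate discrete immersion. Everything else is a routine translation of cross-ratio data between the normalized and original pictures via $M_0$.
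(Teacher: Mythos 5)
Your proof is correct, and it follows exactly the route the paper indicates (the paper leaves the proof to the reader, suggesting only to ``apply suitable M\"obius transformations as in the definition of conformal symmetry''): normalizing by $z_0\mapsto\infty$ reduces all three conditions to central symmetry of the hexagon, and your branch analysis $\lambda_1\lambda_2\lambda_3=\pm1$, with $+1$ excluded via $\tilde z_4=\tilde z_1$ contradicting embeddedness, closes the only non-obvious step. The sole cosmetic omission is to note explicitly that the affine involution cannot be the identity, which is immediate since $z_k\neq z_{k+3}$ for an embedded flower.
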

The proof follows by applying suitable M\"obius transformations as in the 
definition of conformal symmetry and is left to the reader.

Note that the statements of this theorem also hold if we replace $z_0$ by $X$ 
for the flower in consideration. But the corresponding flower about $X$ is not 
embedded and thus not part of the image of a discrete immersion $\widehat{TL}$.

\subsection{Generalized Doyle spirals}
The study of Doyle spirals began with an observation of Doyle for 
the construction of circle packing, see ~\cite{BDS94} for more details. 
We briefly recall the geometric construction of generalized Doyle spirals which 
is based on successive gluing of rescaled copies of a given convex 
quadrilateral.

For a given non-degenerate convex quadrilateral ${\mathcal Q}$ in $\C$ with 
vertices $A,B,C,D$ as in Figure~\ref{FigQuad}~(left),
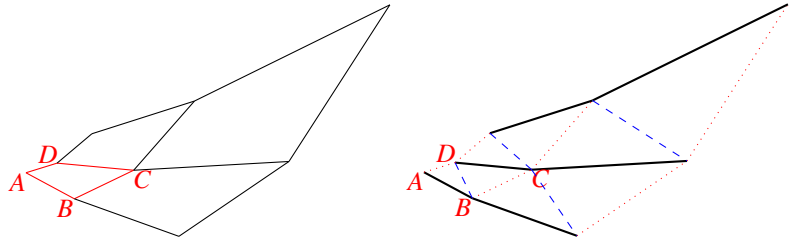
\begin{figure}
\begin{center}
\begin{tikzpicture} [rotate=-45,scale=0.7]
\draw [red] (1,0) -- (2,0.3);
\draw [red, rotate=23, scale=1.4] (1,0) -- (2,0.3);
\draw [red] (1,0) -- ({1.4*cos(23)}, {1.4*sin(23)});
\draw [red] (2,0.3) -- ({
1.4*cos(23)*2.022*cos(8.531) -1.4*sin(23)*2.022*sin(8.531)}, 
{1.4*sin(23)*2.022*cos(8.531) +1.4*cos(23)*2.022*sin(8.531)});

\draw [rotate=23*2, scale=1.4^2] (1,0) -- (2,0.3);
\draw [rotate=23, scale=1.4] (2,0.3) -- ({
1.4*cos(23)*2.022*cos(8.531) -1.4*sin(23)*2.022*sin(8.531)}, 
{1.4*sin(23)*2.022*cos(8.531) +1.4*cos(23)*2.022*sin(8.531)});
\draw [rotate=23, scale=1.4] (1,0) -- ({1.4*cos(23)}, {1.4*sin(23)});

\foreach \i in {0,...,1}{
\draw [rotate=23*\i, scale=1.4^\i] 
({2.022^2*cos(8.531*2)},{2.022^2*sin(8.531*2)})--({
1.4*cos(23)*2.022^2*cos(8.531*2)-1.4*sin(23)*2.022^2*sin(8.531*2)}, 
{1.4*sin(23)*2.022^2*cos(8.531*2)+1.4*cos(23)*2.022^2*sin(8.531*2)});
\draw [rotate=23*\i, scale=1.4^\i] 
({2.022*cos(8.531)},{2.022*sin(8.531)}) -- 
({2*2.022*cos(8.531)-0.3*2.022*sin(8.531)}, 
{0.3*2.022*cos(8.531)+2*2.022*sin(8.531)});
}
\draw [rotate=23*2, scale=1.4^2] 
({2.022*cos(8.531)},{2.022*sin(8.531)}) -- 
({2*2.022*cos(8.531)-0.3*2.022*sin(8.531)}, 
{0.3*2.022*cos(8.531)+2*2.022*sin(8.531)});

\draw (1,-0.25) node[red] {$A$};
\draw (2,0.05) node[red] {$B$};
\draw ({1.4*cos(23)-0.25}, {1.4*sin(23)}) node[red] {$D$};
\draw ({
1.4*cos(23)*2.022*cos(8.531) -1.4*sin(23)*2.022*sin(8.531) +0.25}, 
{1.4*sin(23)*2.022*cos(8.531) +1.4*cos(23)*2.022*sin(8.531)}) node[red] {$C$};
\end{tikzpicture}
\begin{tikzpicture}[rotate=-45,scale=0.7]
\draw [thick] (1,0) -- (2,0.3);
\draw [thick, rotate=23, scale=1.4] (1,0) -- (2,0.3);
\draw [dotted, red] (1,0) -- ({1.4*cos(23)}, {1.4*sin(23)});
\draw [dotted, red] (2,0.3) -- ({
1.4*cos(23)*2.022*cos(8.531) -1.4*sin(23)*2.022*sin(8.531)}, 
{1.4*sin(23)*2.022*cos(8.531) +1.4*cos(23)*2.022*sin(8.531)});

\draw [thick, rotate=23*2, scale=1.4^2] (1,0) -- (2,0.3);
\draw [dotted, red, rotate=23, scale=1.4] (2,0.3) -- ({
1.4*cos(23)*2.022*cos(8.531) -1.4*sin(23)*2.022*sin(8.531)}, 
{1.4*sin(23)*2.022*cos(8.531) +1.4*cos(23)*2.022*sin(8.531)});
\draw [dotted, red, rotate=23, scale=1.4] (1,0) -- ({1.4*cos(23)}, 
{1.4*sin(23)});

\foreach \i in {0,...,1}{
\draw [dotted, red, rotate=23*\i, scale=1.4^\i] 
({2.022^2*cos(8.531*2)},{2.022^2*sin(8.531*2)})--({
1.4*cos(23)*2.022^2*cos(8.531*2)-1.4*sin(23)*2.022^2*sin(8.531*2)}, 
{1.4*sin(23)*2.022^2*cos(8.531*2)+1.4*cos(23)*2.022^2*sin(8.531*2)});
\draw [thick, rotate=23*\i, scale=1.4^\i] 
({2.022*cos(8.531)},{2.022*sin(8.531)}) -- 
({2*2.022*cos(8.531)-0.3*2.022*sin(8.531)}, 
{0.3*2.022*cos(8.531)+2*2.022*sin(8.531)});
}
\draw [thick, rotate=23*2, scale=1.4^2] 
({2.022*cos(8.531)},{2.022*sin(8.531)}) -- 
({2*2.022*cos(8.531)-0.3*2.022*sin(8.531)}, 
{0.3*2.022*cos(8.531)+2*2.022*sin(8.531)});

\draw (1,-0.25) node[red] {$A$};
\draw (2,0.05) node[red] {$B$};
\draw ({1.4*cos(23)-0.25}, {1.4*sin(23)}) node[red] {$D$};
\draw ({
1.4*cos(23)*2.022*cos(8.531) -1.4*sin(23)*2.022*sin(8.531) +0.25}, 
{1.4*sin(23)*2.022*cos(8.531) +1.4*cos(23)*2.022*sin(8.531)}) node[red] {$C$};

\draw [blue, dashed] ({1.4*cos(23)}, {1.4*sin(23)}) -- (2,0.3);
\draw [blue, dashed] ({
1.4*cos(23)*2.022*cos(8.531) -1.4*sin(23)*2.022*sin(8.531)}, 
{1.4*sin(23)*2.022*cos(8.531) +1.4*cos(23)*2.022*sin(8.531)}) -- 
({2*2.022*cos(8.531)-0.3*2.022*sin(8.531)}, 
{0.3*2.022*cos(8.531)+2*2.022*sin(8.531)});
\draw [blue, dashed, rotate=23, scale=1.4] ({1.4*cos(23)}, {1.4*sin(23)}) -- 
(2,0.3);
\draw [blue, dashed, rotate=23, scale=1.4] ({
1.4*cos(23)*2.022*cos(8.531) -1.4*sin(23)*2.022*sin(8.531)}, 
{1.4*sin(23)*2.022*cos(8.531) +1.4*cos(23)*2.022*sin(8.531)}) -- 
({2*2.022*cos(8.531)-0.3*2.022*sin(8.531)}, 
{0.3*2.022*cos(8.531)+2*2.022*sin(8.531)});
\end{tikzpicture}
\end{center}
\caption{Geometric construction of a generalized Doyle spiral 
and splitting into triangles.}\label{FigQuad}
\end{figure}
let $L_1,L_2$ be the orientation preserving similarity transformations such 
that $L_1(A)=B$, $L_1(D)=C$, $L_2(A)=D$, $L_2(B)=C$.
Then
$L_2(L_1({\mathcal Q})) =L_1(L_2({\mathcal Q}))$ by angle count, so 
$L_2L_1=L_1L_2$.
Therefore $L_1,L_2$ are both translations or both scale-rotations with the same 
center of rotation.
Thus, successive applications of $L_1,L_2$ generate either a lattice or a 
generalized Doyle spiral spiraling  about the common center, see 
Figure~\ref{FigExDoyle} for an example.

\begin{figure}[t]
\begin{center}
\begin{tikzpicture} [scale=0.2]
\clip(-25,-10) rectangle (25,20);
\foreach \j in {0,...,6}
\foreach \i in {-10,...,7}{
\draw [rotate=23*\i, scale=1.4^\i] 
({2.022^\j*cos(8.531*\j)},{2.022^\j*sin(8.531*\j)}) -- 
({2*2.022^\j*cos(8.531*\j)-0.3*2.022^\j*sin(8.531*\j)}, 
{0.3*2.022^\j*cos(8.531*\j)+2*2.022^\j*sin(8.531*\j)});
\draw [rotate=23*\i, scale=1.4^\i] 
({2.022^\j*cos(8.531*\j)},{2.022^\j*sin(8.531*\j)})--({
1.4*cos(23)*2.022^\j*cos(8.531*\j)-1.4*sin(23)*2.022^\j*sin(8.531*\j)}, 
{1.4*sin(23)*2.022^\j*cos(8.531*\j)+1.4*cos(23)*2.022^\j*sin(8.531*\j)});
}
\foreach \i in {-10,...,0}{
\draw [rotate=23*\i, scale=1.4^\i] 
({2.022^7*cos(8.531*7)},{2.022^7*sin(8.531*7)})--({
1.4*cos(23)*2.022^7*cos(8.531*7)-1.4*sin(23)*2.022^7*sin(8.531*7)}, 
{1.4*sin(23)*2.022^7*cos(8.531*7)+1.4*cos(23)*2.022^7*sin(8.531*7)});
}
\end{tikzpicture}
\end{center}
\caption{Example of a part of a generalized Doyle spiral from a 
quad.}\label{FigExDoyle}
\end{figure}
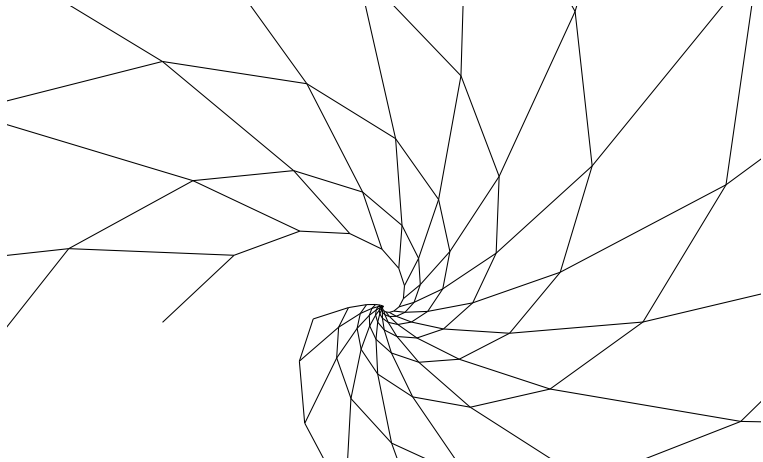

If we split the quadrilateral ${\mathcal Q}$ and all its images consistently into 
two triangles as in Figure~\ref{FigQuad} (right), we 
obtain a triangulation $\widehat{TL}$ which is the image of a discrete 
immersion of the whole triangular lattice $TL_\C$. It is 
easy to see that the cross-ratios $q$ defined in~\eqref{eqdefq} are constant on 
each of the three types of edges and these three values multiply to~$1$.

 \begin{remark}
A triangulation
$\widehat{T}$ is a (part of a) generalized Doyle spiral if and only if 
$\widehat{T}$ is invariant under the M\"obius transformation $\mathcal M$ given in 
Definition~\ref{DefM}. This is also equivalent to the fact that 
all circles $C_k$ considered in part~(iii) of Theorem~\ref{theoconfsym} 
intersect in the same point (the center of the spiral) for all flowers of 
$\widehat{T}$.
\end{remark}

\begin{remark}[Connection to Doyle spirals for circle packings]
 Start with a quadrilateral $\mathcal Q$ constructed from the midpoints of four 
mutually tangent circles (this is possible if and only if $a+c=b+d$ holds for 
the edge lengths), see Figure~\ref{FigDoylePack}. Furthermore, assume that 
$L_1$ and $L_2$ map 
circles onto circles (which is equivalent to the condition $r_1r_3=r_2r_4$ for 
the radii). Then successive applications of $L_1$ and $L_2$ to the first four 
circles will result in a hexagonal circle packing called {\em Doyle spiral}, 
see~\cite{BDS94}.
\end{remark}

 \begin{figure}
\begin{center}
 \begin{tikzpicture}[scale=0.8]
   \coordinate [label={left:$A$}] (A) at (0,0);
  \coordinate [label={right:$B$}] (B) at (2cm, 0);
\coordinate [label={left:$D$}] (D) at (1cm,1.732cm);
\coordinate [label={right:$C$}] (C) at (3cm,1.732cm);
 
  \draw [thick] (A) --  (B) node[midway,below] {$a$} ;
\draw  [thick] (A) -- (D) node[near start,above] {$d$} ;
\draw [thick] (B) --(C) node[near end,below] {$b$};
\draw [thick] (C) -- (D) node[midway,below] {$c$};
 \end{tikzpicture}
 \hspace{3em}
 \begin{tikzpicture}[scale=0.6]
   \coordinate [label={left:$A$}] (A) at (0,0);
  \coordinate [label={right:$B$}] (B) at (2cm, 0);
\coordinate [label={left:$D$}] (D) at (1cm,1.732cm);
\coordinate [label={right:$C$}] (C) at (3cm,1.732cm);
 
  \draw [thick] (A) -- (B) node[near start,below] {$r_1$} node[near end,below] 
{$r_2$};
\draw  [thick] (A) -- (D) ;
\draw [thick] (B) --(C) ;
\draw [thick] (C) -- (D) node[near start,below] {$r_3$} node[near end,below] 
{$r_4$};

\draw (A) circle (1cm);
\draw (B) circle (1cm);
\draw (C) circle (1cm);
\draw (D) circle (1cm);
 \end{tikzpicture}
\end{center}
\caption{Construction of Doyle spirals for circle packings 
from four mutually tangent circles.}\label{FigDoylePack}
\end{figure}
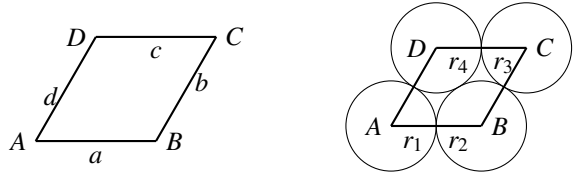

\begin{remark}[Generalized Doyle spirals for conformally equivalent 
tri\-an\-gu\-la\-tions]
\label{remlog}
For two conformally equivalent triangulations $TL$ and $\widehat{TL}=F(TL)$
there exists a function $u:VL\to\R$, called {\em logarithmic 
scale factors}, such that the edge lengths are related by ${\widehat l}([v,w])= 
l([vw]) \text{e}^{(u(v)+u(w))/2}$ 
for all $[v,w]\in EL$,  see for example~\cite{BPS13}. The 
logarithmic scale factors for corresponding generalized Doyle spirals on 
$TL_\C$ with 
$VL_\C=\{n\sin\alpha_2+ 
m\sin\alpha_3\text{e}^{i\alpha_1}: n,m\in\Z\}$ are linear,
\[u(n\sin\alpha_2+ m\sin\alpha_3\text{e}^{i\alpha_1})= An+Bm
  \qquad \text{for suitable } A,B\in\R,\]  
which indicates that this discrete conformal map can be considered 
as an analogue of the exponential map. See also~\cite[Lemma~2.6]{WGS15} 
and~\cite{Bue17}.
 \end{remark}

\subsection{Parametrization of conformally symmetric triangular lattices}
Consider any function $q:EL_{int}\to\C$ which satisfies condition~(ii) of 
Theorem~\ref{theoconfsym}, namely opposite cross-ratios agree,
\begin{equation}\label{eqconfsym}
q([v_0,v_k])=q([v_0,v_{k+3}]),\quad k=1,2,3,
\end{equation} 
for any flower in $T$, where $v_0$ denotes the center with neighbors 
$v_1,\dots, v_6$ in cyclic order. Then $q$ automatically satisfies~\eqref{eqq2}.
Combining~\eqref{eqconfsym} with~\eqref{eqq1}, we deduce that
\begin{equation}\label{eqprod}
 q([v_0,v_k])q([v_0,v_{k+1}])q([v_0,v_{k+2}])=1
\end{equation}
for any three consecutive edges in the flower about $v_0$. Thus we can describe 
the general solution of equation~\eqref{eqq2} together with~\eqref{eqconfsym} 
and~\eqref{eqprod} as three parameter family on the honeycomb 
lattice which is dual to the triangular lattice $TL$. To this end, consider the 
function $q$ (by abuse of notation) to be defined on the corresponding {\em 
dual} edges $e^*\in EL^*$ as in Figure~\ref{FigPara}. 
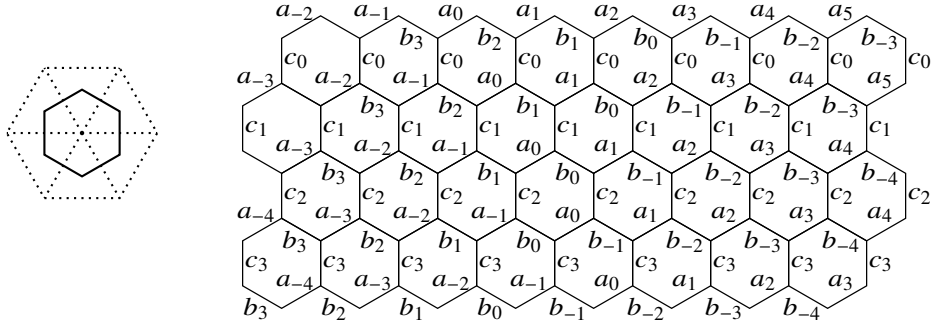
\begin{figure}
\raisebox{1.\height}{
\begin{tikzpicture}[thick, scale=0.5]
  \coordinate  (B) at (2cm,0);
  \coordinate  (C) at (1cm, 1.732cm);
\coordinate  (D) at (1cm, -1.732cm);
\coordinate  (E) at (-2cm,0);
  \coordinate  (F) at (-1cm, 1.732cm);
\coordinate  (G) at (-1cm, -1.732cm);

\draw[dotted] (G) -- (D) -- (B) -- (C) -- (F) -- (E) -- 
(G);
 \coordinate  (A) at (0, 0);
  \draw [dotted] (A) -- (B) ;
\draw [dotted] (A) -- (C) ;
\draw [dotted] (A) -- (D) ;
\draw [dotted] (A) -- (E) ;
\draw [dotted] (A) -- (F) ;
\draw [dotted] (A) -- (G) ;
\draw [thick] (1cm, 0.577cm) -- (0, 1.154cm) --(-1cm, 0.577cm) -- (-1cm, 
-0.577cm) -- (0, -1.154cm)--(1cm, -0.577cm) -- cycle;
\end{tikzpicture}
}
\hfill
\resizebox{0.77\textwidth}{!}{
\begin{tikzpicture}
  \tikzset{
    box/.style={
      regular polygon,
      regular polygon sides=6,
      minimum size=10mm,
      inner sep=0mm,
      outer sep=0mm,
      rotate=90,
    draw
    }
  }

\foreach \i in {0,...,7} 
    \foreach \j in {0,...,1} {
            \node[box] at (0.866*\i,1.5*\j) {};
            \node[box] at (0.866*\i+0.433,1.5*\j+0.75) {};
        }

\foreach \i in {0,...,8} {
\draw (0.866*\i-0.433-0.1,0) node[right] {\footnotesize $c_3$};
\draw (0.866*\i-0.1,0.75) node[right] {\footnotesize $c_2$};
\draw (0.866*\i-0.433-0.1,1.5) node[right] {\footnotesize $c_1$};
\draw (0.866*\i-0.1,1.5+0.75) node[right] {\footnotesize $c_{0}$};
}
\foreach \l in {-3,...,3}
\foreach \j in {-1,...,3} {
\draw (3*0.866-0.866*\l +0.15+0.75*0.577*\j,0.75*\j) node[above] 
{\footnotesize $b_{\scriptstyle\l}$};
}
\foreach \j in {-1,...,1} {
\draw (2*0.866+0.866+0.866*4 +0.15+0.75*0.577*\j,0.75*\j) node[above] 
{\footnotesize $b_{-4}$};
}
\foreach \k in {-4,...,3} 
\foreach \j in {0,...,0} {
\draw (4*0.866+0.866*\k-0.866*\j +0.15+0.75*0.577*\j,0.75*\j) node[below] 
{\footnotesize $a_{\scriptstyle\k}$};
}
\foreach \k in {-4,...,4} 
\foreach \j in {1,...,1} {
\draw (4*0.866+0.866*\k-0.866*\j +0.15+0.75*0.577*\j,0.75*\j) node[below] 
{\footnotesize $a_{\scriptstyle\k}$};
}
\foreach \k in {-3,...,4} 
\foreach \j in {2,...,2} {
\draw (4*0.866+0.866*\k-0.866*\j +0.15+0.75*0.577*\j,0.75*\j) node[below] 
{\footnotesize $a_{\scriptstyle\k}$};
}
\foreach \k in {-3,...,5} 
\foreach \j in {3,...,3} {
\draw (4*0.866+0.866*\k-0.866*\j +0.15+0.75*0.577*\j,0.75*\j) node[below] 
{\footnotesize $a_{\scriptstyle\k}$};
}
\foreach \k in {-2,...,5} 
\foreach \j in {4,...,4} {
\draw (4*0.866+0.866*\k-0.866*\j +0.15+0.75*0.577*\j,0.75*\j) node[below] 
{\footnotesize $a_{\scriptstyle\k}$};
}
\end{tikzpicture}
}
\caption{{\it Left:} The edges of a flower (dotted) and the hexagon of their
dual edges (black).
{\it Right:} Parametrization of conformally 
symmetric triangular lattices.}\label{FigPara}
\end{figure}

\begin{theorem}\label{theoabc}
 Let $\widehat{TL}$ be a conformally symmetric triangulation. Define the 
cross-ratio 
function $q:EL_{int}\to\C$ on the edges and consider it as a function on the 
dual edges $q:EL_{int}^*\to\C$. Consider an interior dual vertex and denote the 
corresponding cross-ratios on the incident edges by 
$a,b,c\in\C\setminus\R_{\geq 0}$. Then the dual graph $\widehat{TL}^*$ is part 
of a honeycomb lattice and the cross-ratios are given by
\begin{equation}\label{eqpara}
a_n=a(abc)^{n-1},\qquad b_l=b(abc)^{l-1},\qquad c_m=c(abc)^{m-1}
\end{equation}
with the notation of Figure~\ref{FigPara}~(right).
\end{theorem}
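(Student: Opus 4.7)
The plan is to pass to the dual honeycomb lattice, extract two local constraints on the cross-ratio values from conformal symmetry together with equation~\eqref{eqprod}, and propagate these constraints from the distinguished central dual vertex. The planar dual of $TL$ is a honeycomb lattice whose edges fall into three parallel classes, which I call types $a, b, c$ (one class per edge direction of $TL$). Each hexagonal face of the dual corresponds to a flower of $TL$, and opposite edges of a hexagon are parallel, hence of the same type. Theorem~\ref{theoconfsym}(ii) then says: on each hexagonal face $q$ takes the same value on the two edges of each type, so each hexagon $H$ carries three values $A(H), B(H), C(H)$, one per type. Applying equation~\eqref{eqprod} to three consecutive edges of a flower---which in the honeycomb traverse all three types exactly once---yields $A(H)\,B(H)\,C(H) = 1$ at every hexagon.

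Next I would set up triangular coordinates on the hexagonal faces: assign each hexagon a triple $(p_1, p_2, p_3) \in \Z^3$ so that two hexagons sharing a type-$k$ edge agree in $p_k$ and differ by $\pm 1$ in each of the other two coordinates, with opposite signs so that $p_1 + p_2 + p_3$ is preserved across every edge. After normalization, $p_1 + p_2 + p_3 = 2$ on the entire connected dual, and the three hexagons at the distinguished dual vertex (with edge values $a, b, c$) receive coordinates $(1, 1, 0)$, $(1, 0, 1)$, $(0, 1, 1)$, which aligns with Figure~\ref{FigPara}.

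Because two hexagons sharing a type-$a$ edge both carry that edge's value as their $A$-component (conformal symmetry forces the two type-$a$ edges within a single hexagon to agree), $A$ depends only on $p_1$. Writing $a_n := A|_{p_1 = n}$ and analogously $b_l, c_m$, the product constraint becomes the functional equation
\begin{equation*}
a_n\, b_l\, c_m = 1 \quad \text{whenever} \quad n + l + m = 2.
\end{equation*}
Comparing this at $(n, l, 2-n-l)$ and $(n-1, l+1, 2-n-l)$ gives $a_n/a_{n-1} = b_{l+1}/b_l$, which must therefore equal a constant $R$ independent of $n$ and $l$; the analogous comparison for $c$ gives $c_{m+1}/c_m = R$. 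Hence $a_n = a R^{n-1}$, $b_l = b R^{l-1}$, $c_m = c R^{m-1}$, and substituting into $a_1 b_1 c_0 = 1$ with $c_0 = c/R$ forces $R = abc$, yielding the claimed formula.

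The main obstacle will be step two: carefully verifying that the three-index coordinate system on hexagons is globally well-defined, i.e.\ that the signs in crossing edges can be chosen consistently so $p_1 + p_2 + p_3$ really is single-valued across the whole dual, and that the resulting indexing matches Figure~\ref{FigPara}. Once this combinatorial bookkeeping is in place, the propagation of values and the solution of the functional equation in step three are essentially automatic.
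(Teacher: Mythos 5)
Your proposal is correct and follows essentially the same route the paper intends: the paper does not write out a proof of Theorem~\ref{theoabc} but derives exactly your two local ingredients beforehand (conformal symmetry as equality of $q$ on opposite edges of each dual hexagon, and the product relation~\eqref{eqprod} giving $A(H)B(H)C(H)=1$), and then asserts the honeycomb parametrization of Figure~\ref{FigPara}. Your triangular-coordinate bookkeeping and the solution of the functional equation $a_nb_lc_m=1$ for $n+l+m=2$ (yielding the common ratio $R=abc$) correctly fill in the details the paper leaves implicit.
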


Using this parametrization, we can (locally) construct examples of conformally 
symmetric discrete immersions.

\begin{lemma}\label{lempart}
Let $a,b,c\in\C\setminus\R_{\geq 0}$. Assume that there exist three conformally 
symmetric embedded flowers with the following values of the cross-ratio 
functions $q$: $a$, $b$, $1/ab$, and $a$, $c$, $1/ac$, and $c$, $b$, $1/bc$ in 
counterclockwise orientation respectively. Define a function $\tilde{q}$ on the 
whole triangular lattice as in Figure~\ref{FigPara}~(right) by~\eqref{eqpara}.

If $|abc|=1$ and $\arg(a)+\arg(b)+\arg(c)=2\pi$, then there exists a 
generalized Doyle spiral whose cross-ratio function $q$ agrees with 
$\tilde{q}$.

If $|abc|\not=1$ or $\arg(abc)\not=2\pi$, then there exists a 
conformally symmetric discrete immersion whose cross-ratio function $q$ agrees 
with $\tilde{q}$ on this part. But this conformally symmetric discrete 
immersion cannot be extended to the whole triangular lattice.
\end{lemma}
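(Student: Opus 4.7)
The plan is to handle the two cases separately, using Lemma~\ref{lemDoyle} for the first and the local construction of Remark~\ref{remq} for the second. If $|abc|=1$ and $\arg(a)+\arg(b)+\arg(c)=2\pi$, then $abc=e^{i\cdot 2\pi}=1$, so the parametrization~\eqref{eqpara} collapses to the constants $a_n\equiv a$, $b_l\equiv b$, $c_m\equiv c$, and $\tilde q$ simply assigns the three values $a,b,c$ to the three classes of parallel edges of $TL_\C$. The hypothesis that the sample flowers are embedded and counterclockwise oriented places $a,b,c\in\Ha$ with argument sum equal to $2\pi$, which are exactly the hypotheses of Lemma~\ref{lemDoyle}. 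That lemma then directly produces a generalized Doyle spiral whose cross-ratio function coincides with $\tilde q$.

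For the second case, starting from the three embedded sample flowers I would build the image triangulation inductively flower by flower, following the construction of Remark~\ref{remq}. The parametrization~\eqref{eqpara} makes the three cross-ratios at every flower multiply to $1$ (their indices sum to a constant, so the powers of $abc$ cancel), and combined with the conformal symmetry $q_{k}=q_{k+3}$ this yields the closing conditions~\eqref{eqq1} and~\eqref{eqq2} automatically at every flower of $TL_\C$. Hence at each step the next conformally symmetric hexagon is determined up to a M\"obius transformation and can be glued along the shared edge. A continuity argument shows that a neighborhood of the three initial flowers can be filled by embedded hexagons, producing a conformally symmetric discrete immersion on a finite sub-triangulation whose cross-ratio function agrees with $\tilde q$.

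The main difficulty is proving that this immersion cannot be extended to all of $TL_\C$. Write $\lambda=abc$; under the assumptions of the second case one has $\lambda\neq 1$, and $a_n=a\lambda^{n-1}$, $b_l=b\lambda^{l-1}$, $c_m=c\lambda^{m-1}$. If $\arg\lambda\not\equiv 0\pmod{2\pi}$, then for $|n|$ large the argument $\arg a_n$ leaves the open interval $(0,\pi)$, so $a_n\notin\Ha$ and the corresponding flower cannot be embedded with counterclockwise orientation. If instead $\lambda\in\R_{>0}\setminus\{1\}$, the arguments stay in $(0,\pi)$ but $|a_n|\to 0$ or $\infty$; here the obstruction is global rather than pointwise, and the cleanest argument is that the would-be immersion accumulates a nontrivial scaling factor $\lambda$ along a chosen lattice direction, which is incompatible with being a single-valued discrete immersion of the entire lattice $TL_\C$. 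In either sub-case, some flower sufficiently far from the initial ones fails to be embedded, so no global extension exists.
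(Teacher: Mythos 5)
Your first case is close to the paper's but has a small unjustified step, and your second case has two genuine gaps. For the first case: embeddedness of the sample flowers does \emph{not} place $a,b,c$ in $\Ha$. The argument of the cross-ratio on an edge equals the sum of the two triangle angles opposite that edge, so for an embedded pair (or flower) it can lie anywhere in $(0,2\pi)$; the only constraint an embedded flower imposes is that the six arguments sum to $4\pi$, hence for a conformally symmetric flower $\arg a+\arg b+\arg c=2\pi$, with individual arguments possibly exceeding $\pi$. So you cannot invoke Lemma~\ref{lemDoyle} as stated; the paper instead starts from the given embedded flower and repeats the similarity-propagation argument from the \emph{proof} of Lemma~\ref{lemDoyle}, which needs no hypothesis beyond the initial embedded flower.

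The same misconception breaks your first non-extendability sub-case: from ``$a_n\notin\Ha$'' you conclude the flower cannot be embedded, but a single cross-ratio with argument in $(\pi,2\pi)$ is perfectly compatible with an embedded (even conformally symmetric) flower, since the only necessary condition is $\arg a_n+\arg b_{2-n-m}+\arg c_m=2\pi$. The paper's argument therefore chooses \emph{two} indices $n,m$ with $\arg a_n,\arg c_m\in(\pi,2\pi)$ sitting in a common flower; then the sum already exceeds $2\pi$ and no embedded flower exists. Your second sub-case ($abc\in\R_{>0}\setminus\{1\}$) is not proved at all: the ``accumulated scaling factor along a lattice direction'' is not an obstruction to being a discrete immersion --- generalized Doyle spirals themselves have unbounded scale growth along lattice directions and are immersions of the whole lattice, since only local embeddedness of flowers is required. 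This sub-case is the main content of the paper's proof: one first disposes of the situation where some $\arg\in(\pi,2\pi)$ by producing a flower with two huge length cross-ratios whose hexagon is non-embedded, and then, when $\arg a,\arg b,\arg c\in(0,\pi]$, one shows quantitatively that unbounded $|a_n|$ and $|c_m|$ force the angles $\gamma_1,\hat{\alpha}_1,\delta_1,\delta_2$ of Figure~\ref{FigNonEmbedded} to be so small that $2\arg(b)+\gamma_1+\hat{\alpha}_1+\delta_1+\delta_2<2\pi$, so the flower cannot close up about its center. Some argument of this kind (or a substitute) is needed; without it the non-extendability claim is unsubstantiated.
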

Figure~\ref{FigAiry}~(left) shows an example of a conformally symmetric discrete 
immersion which is not a Doyle spiral.
\begin{figure}
\begin{center}
\includegraphics[clip, trim=1.2cm 0cm 0cm 0cm, height=5.3cm]{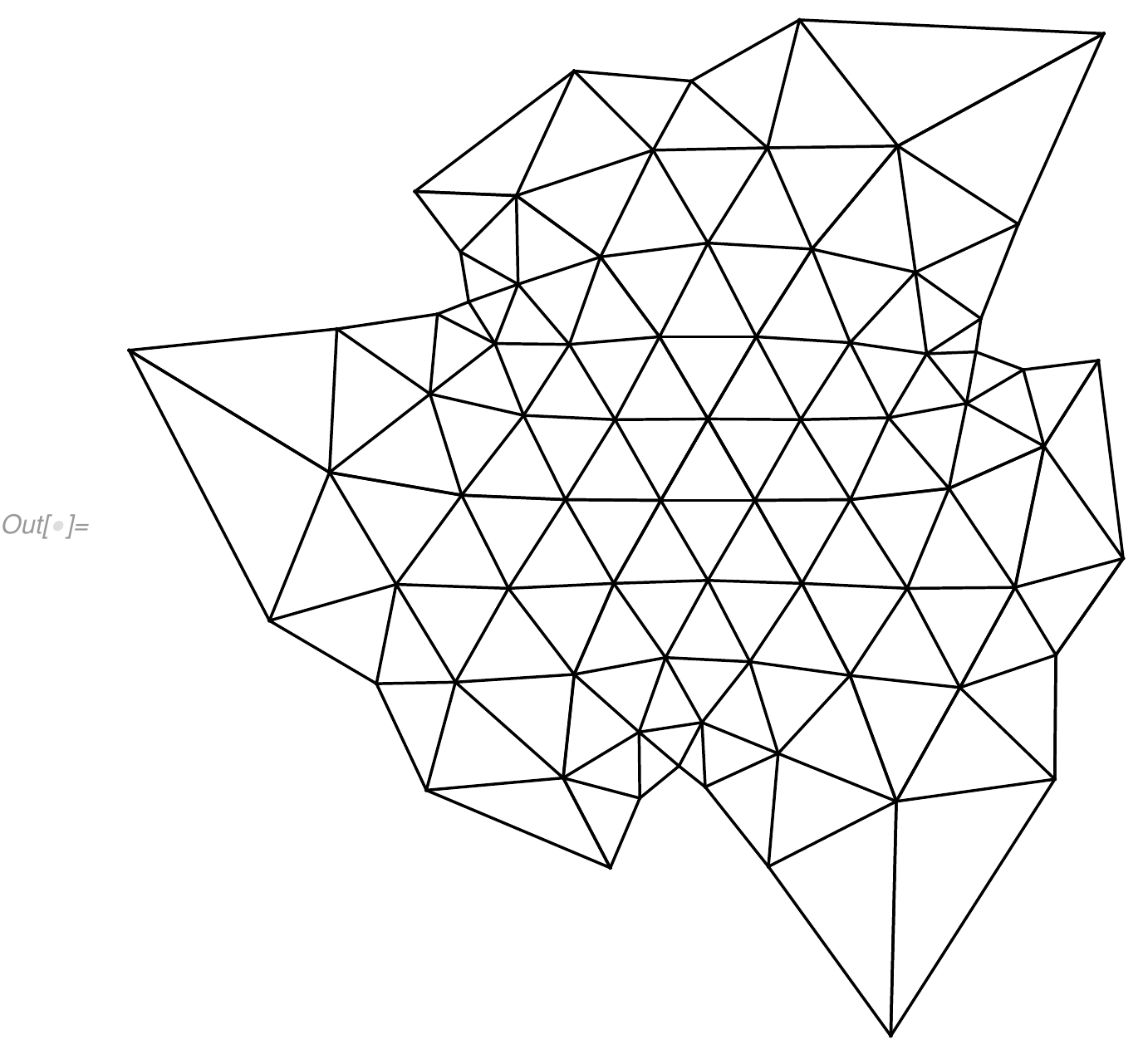}
\hspace{3em}
\includegraphics[clip, trim=1.5cm 0.5cm 1.5cm 2cm, height=5.3cm]{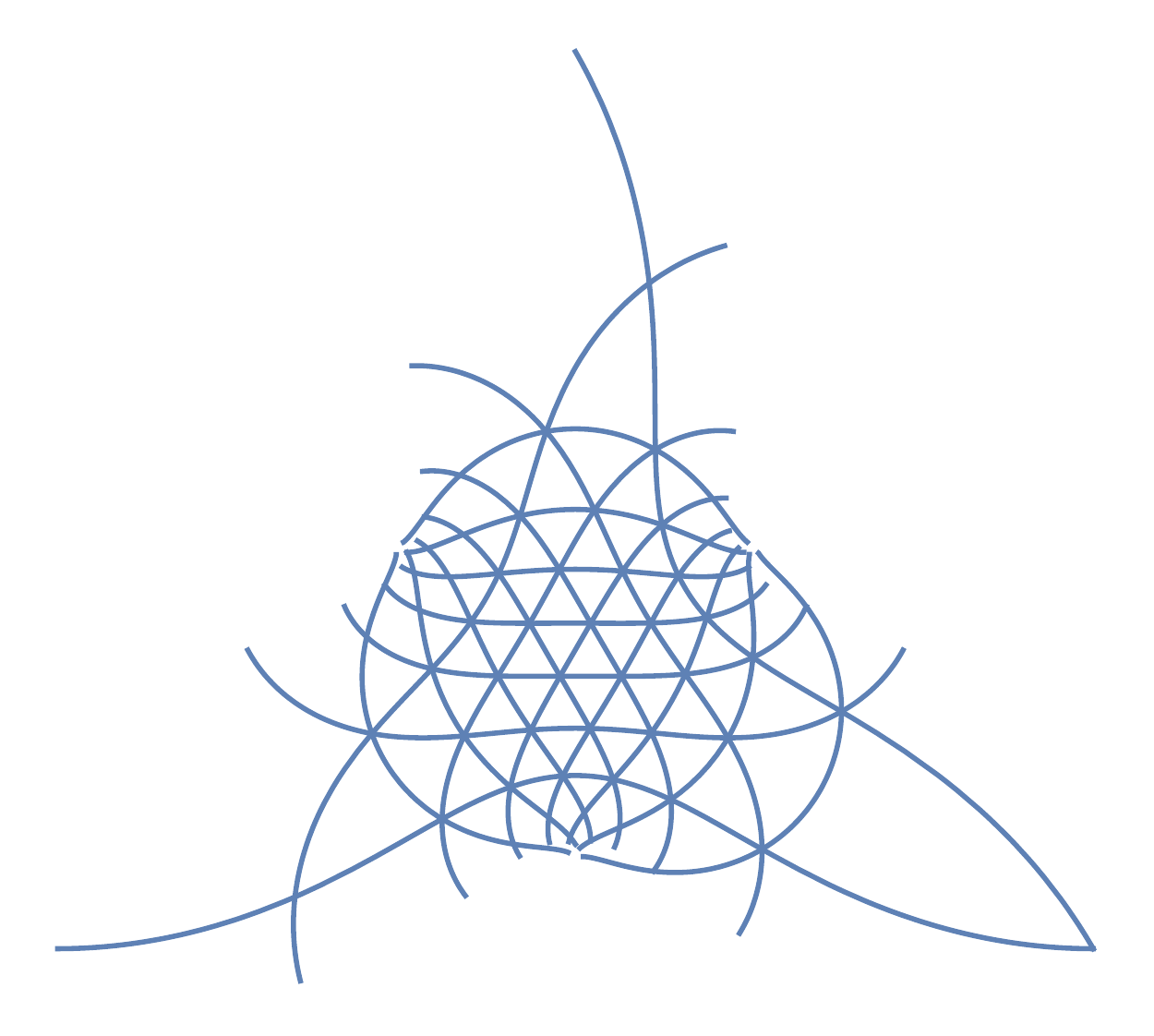}
\end{center}
 \caption{{\it Left:} Example of a general conformally symmetric triangulation 
with $a=b=c=\text{e}^{i\phi}$ and $\phi=(2/3+1/200)\pi$. The 
triangulation already starts to develop singularities. {\it 
Right:} Plot of the function $\text{e}^{i\pi/6}\frac{\text{Bi}(w) - 
\sqrt{3}\cdot \text{Ai}(w)}{\text{Bi}(w) +\sqrt{3}\cdot 
\text{Ai}(w)}$}\label{FigAiry}
\end{figure}
\begin{proof}
If $|abc|=1$ and $\arg(a)+\arg(b)+\arg(c)=2\pi$, we can start by assumption 
with an embedded conformally symmetric flower with cross-ratios $a$, $b$, $c$. 
As in the proof of Lemma~\ref{lemDoyle} we can continue to add triangles 
according to the given cross-ratios (which are constant on parallel edges) and 
obtain a generalized Doyle spiral defined on the whole lattice.

If $|abc|\not=1$ or $\arg(abc)\not=2\pi$ (or both), we still can start by 
assumption with an embedded conformally symmetric flower and we can continue 
this discrete immersion (first defined only on one flower) by using the values 
of $\tilde{q}$ for the cross-ratios. But we will show that this construction 
will always lead to non-closing flowers or to non-embedded flowers as in 
Figure~\ref{FigNonEmbedded}. 
Therefore, we cannot extend the conformally symmetric discrete immersion in 
this case.

First note that if $\arg(abc)\not=2\pi K$ for some $K\in\Z$, then we can choose 
$n,m\in\Z$ such that with the notation of Figure~\ref{FigPara} 
and~\eqref{eqpara} we have $\arg a_n,\arg c_m\in(\pi,2\pi)$. But an embedded 
flower with these cross-ratios $a_n$ and $c_m$ (and $b_{2-n-m}$) cannot exist.

If $abc=|abc|\not= 1$, we may assume that $\arg(a)+\arg(b)+\arg(c)=2\pi$ holds 
for $\arg(a)$, $\arg(b), \arg(c)\in(0,2\pi)$ because else we do not obtain an 
embedded flower (which we have by assumption).
As $|abc|\not= 1$ there exist $n,m,l$ such that $|a_n|>C$, $|b_l|>C$ and 
$|c_m|>C$ for any constant $C>0$. Therefore, if one of the arguments of $a,b,c$ 
is $>\pi$, we can choose two of the indices and consider a flower with two 
very big length cross-ratios such that the hexagon corresponding to these 
cross-ratios as in Figure~\ref{figFlowerMoeb}~(right) is not embedded. In this 
case, the corresponding flower will also be non-embedded, see 
Figure~\ref{FigNonEmbedded}. So the only remaining case is
$\arg(a),\arg(b),\arg(c)\in(0,\pi]$.
Assume without loss of generality that $\arg b\leq \arg c<\pi$. Let $n$ be such 
that $|a_n|>1/\sin^2((\pi-\arg b)/2)$ and assume that $|a_n|$ is very large. 
Consider a pair of embedded triangles with cross-ratio $a_n$ on 
the edge, then one of the adjacent angles is smaller than 
$\arcsin(1/\sqrt{|a_n|})$. Call this angle $\gamma_1$ and consider the notation 
of Figure~\ref{FigNonEmbedded}~(left). As the configuration is assumed to be 
embedded we deduce $\hat{\alpha}_1\leq \arcsin(1/|c_m|)$. Note that we can also 
choose $|c_m|$ as large as we like if we suppose that we have a conformally 
symmetric discrete immersion on the whole triangular lattice. But as 
$\pi-\alpha_1-\beta_1<\arg b$, we can choose $n$, $m$ such that $\gamma_1$, 
$\hat{\alpha}_1$ and similarly $\delta_1, \delta_2$ are so small that 
$2\arg(b)+ \gamma_1+\hat{\alpha}_1 +\delta_1 +\delta_2<2\pi$. Therefore, the 
flower cannot close up about $z_0$.
\end{proof}

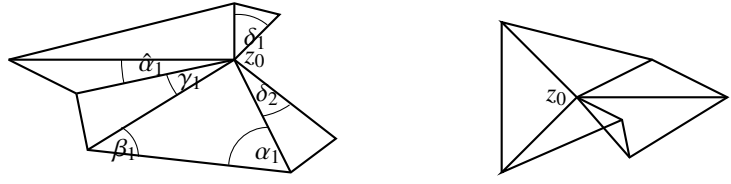
\begin{figure}
\begin{center}
 \begin{tikzpicture}[scale=1.5]
  \coordinate (A) at (0, 0);
  \coordinate [label={right:$z_0$}]  (B) at (2cm,0);
  \coordinate  (C) at (2cm, 0.5cm);
\coordinate  [label={[label distance=0.2cm]0:$\beta_1$}] (D) at (0.7cm, 
-0.8cm);
\draw  (1.cm,-0.6cm) arc (60:0:0.3cm);
\coordinate  (E) at (0.6cm, -0.3cm);
\coordinate  (F) at (2.4cm, 0.4cm);
\coordinate  [label={175:$\alpha_1$}] (G) at (2.5cm, -1cm);
\draw  (2.3cm,-0.6cm) arc (90:177:0.35cm);
\coordinate  (H) at (2.9cm, -0.7cm);

\coordinate [label={[label distance=0.3cm]190:$\gamma_1$}]  (B1) at (2cm,0);
\draw  (1.4cm,-0.13cm) arc (197:217:0.6cm);
\coordinate [label={[label distance=0.8cm]183.9:$\hat{\alpha}_1$}]  (B2) at 
(2cm,0);
\draw  (1.cm,0cm) arc (180:192:1.cm);
\coordinate [label={[label distance=0.015cm]86.9:$\delta_1$}]  (B3) at (2cm,0);
\draw  (2.cm,0.4cm) arc (95:50:0.4cm);
\coordinate [label={[label distance=0.25cm]310:$\delta_2$}]  (B4) at (2cm,0);
\draw  (2.25cm,-0.5cm) arc (280:305:0.6cm);

\draw [thick] (A) -- (C) -- (F) -- (B) -- (H) -- (G) -- (D) -- (E) ;
\draw [thick] (A) -- (B) ;
\draw [thick] (C) -- (B) ;
\draw [thick] (D) -- (B) ;
\draw [thick] (E) -- (B) ;
\draw [thick] (G) -- (B) ;
\draw [thick] (A) -- (E) ;
\end{tikzpicture}
\hspace{2cm}
 \begin{tikzpicture}
  \coordinate [label={left:$z_0$}] (A) at (0, 0);
  \coordinate  (B) at (2cm,0);
  \coordinate  (C) at (1cm, 0.5cm);
\coordinate  (D) at (0.7cm, -0.8cm);
\coordinate  (E) at (0.6cm, -0.3cm);
  \coordinate (F) at (-1cm, 1.cm);
\coordinate  (G) at (-1cm, -1.cm);

\draw [thick] (A) -- (D) -- (B) -- (C) -- (A);
  \draw [thick] (A) -- (B) ;
\draw [thick] (A) -- (E) -- (D) ;
\draw [thick] (A) -- (G) -- (E) ;
\draw [thick] (A) -- (F) -- (C) ;
\draw [thick] (F) -- (G) ;
\end{tikzpicture}
\end{center}
\caption{Examples of images of a flower about $z_0$ which does not close 
up~(left) or which contains a non embedded 
triangle~(right)}\label{FigNonEmbedded}
\end{figure}

\begin{corollary}\label{CorLat}
 Generalized Doyle spirals are the only conformally symmetric triangular meshes 
of the whole lattice.
\end{corollary}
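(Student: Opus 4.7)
The plan is to derive the corollary essentially as the contrapositive of the extension obstruction in Lemma~\ref{lempart}. Given a conformally symmetric triangulation $\widehat{TL}$ defined on all of $TL_\C$, I would fix an arbitrary interior dual vertex and read off the three incident cross-ratios $a,b,c\in\C\setminus\R_{\geq 0}$. By Theorem~\ref{theoabc}, these three numbers determine the cross-ratio function on the entire dual honeycomb lattice via the explicit formulas $a_n=a(abc)^{n-1}$, $b_l=b(abc)^{l-1}$, $c_m=c(abc)^{m-1}$.

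Next I would show that $|abc|=1$ and $\arg(abc)=2\pi$ (with the standard branches of the arguments of $a,b,c$) are forced by the assumption that $\widehat{TL}$ is embedded as a flower at every interior vertex and exists on the entire lattice. This is precisely the content of the second half of Lemma~\ref{lempart}: if $|abc|\neq 1$ or $\arg(abc)\neq 2\pi$, the construction either forces a flower with cross-ratios whose arguments leave $(0,\pi)$ (so that no embedded flower realizing them exists) or forces a non-closing/non-embedded flower as illustrated in Figure~\ref{FigNonEmbedded}. Since $\widehat{TL}$ is assumed to be a conformally symmetric discrete immersion on the whole lattice, none of these obstructions can occur, so both the modulus and argument conditions on $abc$ must hold.

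Once $|abc|=1$ and $\arg(abc)=2\pi$ are established, the parametrization in Theorem~\ref{theoabc} collapses: $(abc)^{n-1}=1$ for every integer $n$, so the cross-ratio function is constant on each of the three parallel classes of edges of $TL_\C$, with values $a,b,c$ satisfying $\arg(a)+\arg(b)+\arg(c)=2\pi$ and $|abc|=1$. This is exactly the hypothesis of Lemma~\ref{lemDoyle} (equivalently, the first case of Lemma~\ref{lempart}), and applying that lemma produces a generalized Doyle spiral whose cross-ratio function coincides with the one of $\widehat{TL}$. Since by Remark~\ref{remq} the cross-ratio function determines the immersion up to M\"obius transformations, this identifies $\widehat{TL}$ as a generalized Doyle spiral.

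The only genuine step is the first one, identifying $|abc|=1$ and $\arg(abc)=2\pi$, but this is effectively already proved inside Lemma~\ref{lempart}; so the corollary reduces to a clean invocation of that lemma together with Theorem~\ref{theoabc}. The main conceptual point to highlight in writing is that the global existence hypothesis removes the two obstructions listed in Lemma~\ref{lempart}, leaving only the Doyle case.
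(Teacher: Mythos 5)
Your proposal is correct and follows essentially the same route as the paper: the corollary is stated there as an immediate consequence of Lemma~\ref{lempart}, whose second part rules out $|abc|\neq 1$ or $\arg(a)+\arg(b)+\arg(c)\neq 2\pi$ for an immersion of the whole lattice, so by the parametrization of Theorem~\ref{theoabc} the cross-ratios are constant on parallel edges and the mesh is a generalized Doyle spiral as in Lemma~\ref{lemDoyle}. Your write-up just makes this contrapositive reading explicit, which matches the paper's intent.
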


 \subsection{Smooth analogues for conformally symmetric triangular lattices}
In the following, we are 
interested in identifying the smooth conformal maps which may be considered as 
analogues of conformally symmetric immersions.
Many examples of conformally symmetric discrete immersions are discrete 
$\vartheta$-conformal maps for some $\vartheta\in[0,\pi/2]$ which will be 
detailed in Subsection~\ref{SecEx}. 
The main idea for relating our conformally symmetric discrete immersions to 
smooth analogues is to consider the function 
$\log(q([v_0,v_k])/Q([v_0,v_k]))$
as a discrete version of the Schwarzian derivative, similarly as 
in~\cite{BH01} for conformally symmetric circle packings. 
A similar approach has been taken in~\cite{Bue17conv} where
a more detailed connection of discrete and smooth Schwarzian derivatives is 
established via convergence statements, but only for the cases of conformal 
equivalence ($\vartheta=0$) and circle patterns ($\vartheta=\pi/2$). 

Recall that $Q$ is constant on parallel edges due to the symmetry of the 
lattice. Thus generalized Doyle spirals with constant values of $q$ on parallel 
edges corresponds to a constant Schwarzian derivative and can therefore be 
considered as discrete exponentials. In the remaining case of conformally 
symmetric discrete immersions we know from~\eqref{eqpara} that $q$ is constant along lattice 
directions 
which corresponds to a linear Schwarzian derivative. In the smooth theory, 
conformal maps with linear non-constant Schwarzian derivative are quotients 
of Airy functions (see Figure~\ref{FigAiry}~(right) for an example). Airy 
functions are fundamental solutions of 
$\psi''=x\psi$, see for example~\cite{SpO87}, in particular 
\[\text{Ai}(x)=\frac{1}{\pi}\int_0^\infty\cos(xt+\frac{t^3}{3})dt,\qquad 
\text{Bi}(x)=\frac{1}{\pi}\int_0^\infty \text{e}^{xt-\frac{t^3}{3}}+ 
\sin(xt+\frac{t^3}{3})dt.\]
Therefore, conformally symmetric discrete immersions can be considered as 
discrete analogues of quotients of Airy functions.

\section{Examples of discrete $\vartheta$-conformal maps and connection to 
trigonometry}\label{SecExT}

Our study of conformally symmetric 
triangulations motivated the introduction of discrete $\vartheta$-conformal 
maps.
In this section, we
show that 
many conformally symmetric triangular lattices constitute examples of 
$\vartheta$-conformal maps. Furthermore, we show how discrete
$\vartheta$-conformal maps may be related to discrete holomorphic 
differentials. 
Finally, we present some geometric ideas related to the notion of 
$\vartheta$-conformal maps.

\subsection{Conformally symmetric triangular lattices as examples of discrete 
$\vartheta$-conformal maps}\label{SecEx}
The class of conformally symmetric triangular lattices provides examples for 
all discrete $\vartheta$-conformal maps in the sense of 
Definition~\ref{discconfdef}. 

We start with generalized Doyle spirals.
We will now prove that generalized Doyle spirals constitute examples of 
discrete $\vartheta$-conformal maps
for suitable triangular lattices and suitable choices for $\vartheta$.
 
\begin{lemma}\label{lemdiscconf}
For every generalized Doyle spiral where all quadrilaterals have been split 
consistently into triangles, there exists an intervall 
$I\subset[0,\frac{\pi}{2}]$, where $I=(\vartheta_0,\frac{\pi}{2}]$ for some 
$\vartheta_0\in[0,\frac{\pi}{2}]$ or $I=[0,\frac{\pi}{2}]$, such that for every 
$\vartheta\in I$ there is a triangular lattice $TL_\C=TL_\C(\vartheta)$ and a 
discrete $\vartheta$-conformal map $F:TL_\C\to\C$ such that the image of 
$TL_\C$ 
under $F$ is the given generalized Doyle spiral.
\end{lemma}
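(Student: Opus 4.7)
The plan is to set up the $\vartheta$-conformality condition as an explicit system of equations in the angles $(\alpha,\beta,\gamma)$ of the preimage lattice $TL_\C$ and solve it by continuation starting from the unambiguous case $\vartheta=\pi/2$.

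\emph{Step 1 (cross-ratios of a triangular lattice).} For a congruent triangular lattice with angle triple $(\alpha,\beta,\gamma)$, $\alpha+\beta+\gamma=\pi$, adjacent triangles are related by the $180^\circ$ rotation about the midpoint of the shared edge. Placing the shared edge on $[0,1]$ gives the opposite vertices $z=re^{i\beta}$ and $v_l=1-z$ with $r=\sin\gamma/\sin\alpha$ (law of sines); a direct calculation yields
$$Q_\alpha=\Bigl(\tfrac{\sin\beta}{\sin\gamma}\Bigr)^{\!2}e^{2i\alpha},\quad Q_\beta=\Bigl(\tfrac{\sin\gamma}{\sin\alpha}\Bigr)^{\!2}e^{2i\beta},\quad Q_\gamma=\Bigl(\tfrac{\sin\alpha}{\sin\beta}\Bigr)^{\!2}e^{2i\gamma},$$
so $Q_\alpha Q_\beta Q_\gamma=1$ and $\arg Q_\alpha+\arg Q_\beta+\arg Q_\gamma=2\pi$. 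The Doyle spiral cross-ratios $q_1,q_2,q_3$ satisfy the same two identities with $\arg q_k\in(0,\pi)$ by Lemma~\ref{lemDoyle}.

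\emph{Step 2 (reduction to two equations and a base point).} The relation~\eqref{eqdefdiscconf} becomes three equations
$$\cos\vartheta\,\log|Q_k|+\sin\vartheta\,\arg Q_k=\cos\vartheta\,\log|q_k|+\sin\vartheta\,\arg q_k,\qquad k=1,2,3.$$
Summing them reduces both sides to $2\pi\sin\vartheta$, so one equation is redundant; I keep two, $\Phi_1(\alpha,\beta;\vartheta)=\Phi_2(\alpha,\beta;\vartheta)=0$, with $\gamma=\pi-\alpha-\beta$. At $\vartheta=\pi/2$ the system collapses to $\arg Q_k=\arg q_k$, giving the explicit base solution $\alpha_0=\arg(q_1)/2$, $\beta_0=\arg(q_2)/2$, $\gamma_0=\arg(q_3)/2$, all lying in $(0,\pi/2)$.

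\emph{Step 3 (implicit function theorem and continuation).} A direct computation of the Jacobian at a general angle triple yields
$$\det\frac{\partial(\Phi_1,\Phi_2)}{\partial(\alpha,\beta)} =4\sin^2\vartheta +4\cos^2\vartheta\bigl(\cot\alpha\cot\beta+\cot\beta\cot\gamma+\cot\gamma\cot\alpha\bigr).$$
Invoking the classical identity $\cot\alpha\cot\beta+\cot\beta\cot\gamma+\cot\gamma\cot\alpha=1$, valid whenever $\alpha+\beta+\gamma=\pi$, collapses the determinant to $\det J\equiv 4$ at every admissible configuration and every $\vartheta$. Hence the implicit function theorem yields a smooth family $\vartheta\mapsto(\alpha(\vartheta),\beta(\vartheta),\gamma(\vartheta))$ that extends unobstructed by the Jacobian; the only possible obstacle is the positive-angle constraint.

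\emph{Step 4 (defining $I$ and the immersion).} Let $\vartheta_0$ be the infimum of $\vartheta\in[0,\pi/2]$ for which the continuation from $(\alpha_0,\beta_0)$ stays inside $\{(\alpha,\beta):\alpha,\beta,\pi-\alpha-\beta>0\}$ on $[\vartheta,\pi/2]$, and set $I:=(\vartheta_0,\pi/2]$, with $I=[0,\pi/2]$ if the continuation never leaves the admissible region. For each $\vartheta\in I$, the triangular lattice $TL_\C(\vartheta)$ with angles $(\alpha(\vartheta),\beta(\vartheta),\gamma(\vartheta))$ is well-defined, and the unique piecewise affine-linear extension $F:TL_\C(\vartheta)\to\C$ that sends each triangle of $TL_\C(\vartheta)$ to the corresponding triangle of the Doyle spiral satisfies~\eqref{eqdefdiscconf} by construction; it is a discrete immersion because its image, the Doyle spiral, is embedded. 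The main obstacle is verifying the trigonometric identity that makes $\det J\equiv 4$; once it is in hand, everything else is a routine continuation argument.
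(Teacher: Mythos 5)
Your proposal is correct, and its skeleton coincides with the paper's: express the three lattice cross-ratios through the angles $(\alpha,\beta,\gamma)$ (squared sine ratios times $e^{2i(\text{opposite angle})}$), match them against the three constant cross-ratios of the spiral, observe the base solution $\alpha=\arg q_k/2$ at $\vartheta=\pi/2$, and then extend to smaller $\vartheta$. Where you genuinely add something is the continuation step: the paper only asserts that the equations ``are still solvable'' for $\vartheta$ in a neighborhood of $\pi/2$, whereas you reduce to two equations (using $\sum_k\log|Q_k|=\sum_k\log|q_k|=0$ and $\sum_k\arg Q_k=\sum_k\arg q_k=2\pi$), compute the Jacobian explicitly, and use the identity $\cot\alpha\cot\beta+\cot\beta\cot\gamma+\cot\gamma\cot\alpha=1$ for $\alpha+\beta+\gamma=\pi$ to get $\det J\equiv 4$; I checked this computation and it is right. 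This gives a cleaner justification of the interval structure $I=(\vartheta_0,\pi/2]$ or $[0,\pi/2]$, since the implicit function theorem never degenerates and the only obstruction is an angle tending to $0$, which is exactly what the statement allows. Conversely, the paper spends its remaining effort on what you omit: via the non-convexity of the image of $G(\alpha,\beta)=\bigl(\log\frac{\sin\beta}{\sin\alpha},\log\frac{\sin\alpha}{\sin(\alpha+\beta)}\bigr)$ it exhibits spirals for which the system is unsolvable at $\vartheta=0$, showing $\vartheta_0>0$ can actually occur; this is not needed for the statement as phrased, so its absence is not a gap. Two small polish points: the matching of $Q_k$ with $q_k$ must respect the combinatorial identification of the three parallel edge classes under $F$ (you do this implicitly via the base point, but say so), and the image Doyle spiral is in general only a discrete immersion (embedded flowers), not globally embedded, which is all that the definition of discrete immersion requires.
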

\begin{proof}
Given a generalized Doyle spiral, split all of its convex quadrialterals 
consistently into triangles as in Figure~\ref{FigQuad}~(right). This defines 
six angles $\alpha_1,\dots,\alpha_6\in (0,\pi)$ as in Figure~\ref{FigAng} which 
determine the quads of the Doyle spiral up to Euclidean motions and scaling.
Due to the convexity of the quad, these six angles satisfy 
$\alpha_3+\alpha_6<\pi$, $\alpha_2+\alpha_4<\pi$,
$\alpha_1+\alpha_2+\alpha_3=\pi$, and $\alpha_4+\alpha_5+\alpha_6=\pi$ and they 
determine the cross-ratios $q_1,q_2,q_3$ of the Doyle spiral via
\begin{align*}
 \log q_1 &=\log\left(\frac{\sin\alpha_6}{\sin\alpha_4}\cdot 
\frac{\sin\alpha_2}{\sin\alpha_3}\right) +i(\alpha_1+\alpha_5),\\
 \log q_2 &=\log\left(\frac{\sin\alpha_3}{\sin\alpha_1}\cdot 
\frac{\sin\alpha_4}{\sin\alpha_5}\right) +i(\alpha_2+\alpha_6),\\
 \log q_3 &=\log\left(\frac{\sin\alpha_5}{\sin\alpha_6}\cdot 
\frac{\sin\alpha_1}{\sin\alpha_2}\right) +i(\alpha_3+\alpha_4).
\end{align*}
The labeling of the cross-ratios is defined according to these formulas.
\begin{figure}
\begin{tikzpicture}[rotate=-30,scale=1]
\tkzDefPoint(1,0){A}
\tkzDefPoint(2,0.3){B}
\tkzDefPoint(0,0){O}
\tkzDefPoint({2*2.022*cos(8.531*pi/180)-0.3*2.022*sin(8.531*pi/180)},{
0.3*2.022*cos(8.531*pi/180)+2*2.022*sin(8.531*pi/180)}){C}
\tkzDrawSegment[style=thick](A,B)
\tkzDrawSegment[style=thick](B,C)
\tkzDefPointBy[rotation=center O angle 23](A)
\tkzGetPoint{A1}
\tkzDefPointBy[homothety=center O ratio 1.4](A1)
\tkzGetPoint{A1'}
\tkzDefPointBy[rotation=center O angle 23](B)
\tkzGetPoint{B1}
\tkzDefPointBy[homothety=center O ratio 1.4](B1)
\tkzGetPoint{B1'}
\tkzDefPointBy[rotation=center O angle 23](C)
\tkzGetPoint{B1}
\tkzDefPointBy[homothety=center O ratio 1.4](C1)
\tkzGetPoint{C1'}

\tkzDrawSegment[style=thick](A1',B1')
\tkzDrawSegment[style=dotted, color=red](A,A1')
\tkzDrawSegment[style=dotted, color=red](B,B1')
\tkzDrawSegment[style=dotted, color=red](C,C1')
\tkzDrawSegment[style=dashed, color=blue](A1',B)
\tkzDrawSegment[style=thick](B1',C1')
\tkzDrawSegment[style=dashed, color=blue](B1',C)

\tkzMarkAngle[arc=l, size= .6cm](C,B,B1')
\tkzLabelAngle[pos = .4](C,B,B1'){$\alpha_1$}
\tkzMarkAngle[arc=l, size= .7cm](B1',C,B)
\tkzLabelAngle[pos = -0.5](B1',C,B){$\alpha_2$}
\tkzMarkAngle[arc=l, size= .5cm](B,B1',C)
\tkzLabelAngle[pos = .4](B,B1',C){$\alpha_3$}
\tkzMarkAngle[arc=l, size= .7cm](C1',C,B1')
\tkzLabelAngle[pos = .4](C1',C,B1'){$\alpha_4$}
\tkzMarkAngle[arc=l, size= 1.4cm](B1',C1',C)
\tkzLabelAngle[pos = 1.2](B1',C1',C){$\alpha_5$}
\tkzMarkAngle[arc=l, size= .5cm](C,B1',C1')
\tkzLabelAngle[pos = .4](C,B1',C1'){$\alpha_6$}
\tkzMarkAngle[arc=l, size= .35cm](B1',B,A1')
\tkzLabelAngle[pos = .15](B1',B,A1'){$\alpha_4$}
\tkzMarkAngle[arc=l, size= .7cm](A1',B1',B)
\tkzLabelAngle[pos = .5](A1',B1',B){$\alpha_5$}
\tkzMarkAngle[arc=l, size= .4cm](B,A1',B1')
\tkzLabelAngle[pos = .3](B,A1',B1'){$\alpha_6$}
\end{tikzpicture}
\hfill
\includegraphics[width=0.4\textwidth]{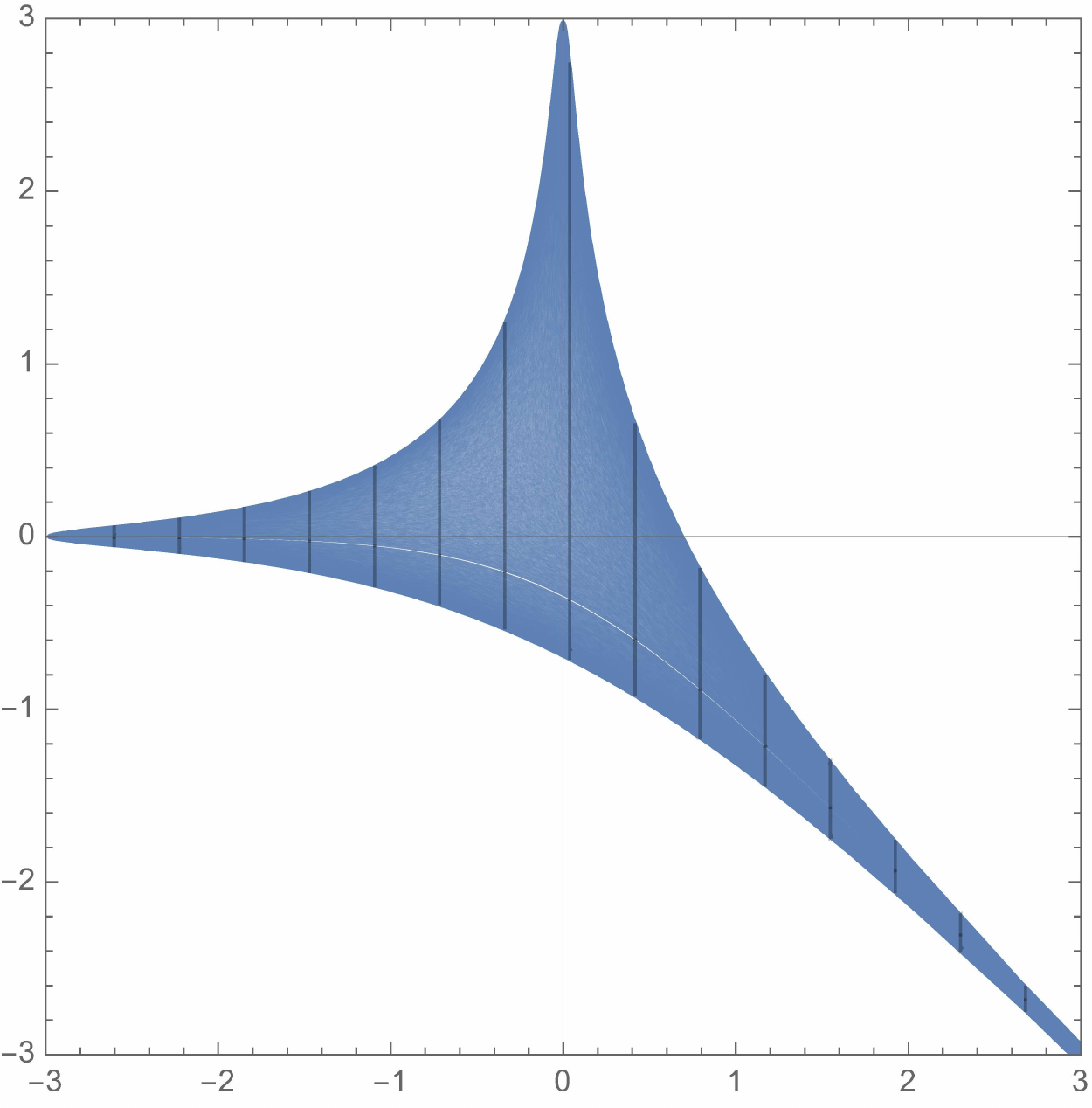}
\caption{{\it Left:} Definition of angles in the splitted convex quads of a 
generalized Doyle spiral; {\it right:} Image domain of $G(\alpha,\beta)$
}\label{FigAng}
\end{figure}

Let $TL_\C$ be a triangular lattice with angles $\alpha,\beta,\gamma\in 
(0,\pi)$ 
as in Figure~\ref{FigTiling}, where $\alpha+\beta+\gamma=\pi$. Then its three 
cross-ratios may be expressed in terms of the angles as
\begin{align*}
 \log Q_1 &=2\log\frac{\sin\beta}{\sin\alpha} +2i\gamma, &
 \log Q_2 &=2\log\frac{\sin\alpha}{\sin\gamma} +2i\beta, &
 \log Q_3 &=2\log\frac{\sin\gamma}{\sin\beta} +2i\alpha.
\end{align*}
Thus, for $\vartheta=\pi/2$ we can choose $\gamma=(\alpha_1+\alpha_5)/2$, 
$\beta=(\alpha_2+\alpha_6)/2$, and $\alpha=(\alpha_3+\alpha_4)/2$ and obtain 
$\text{Re}(\text{e}^{-i\frac{\pi}{2}}q_k)= 
\text{Re}(\text{e}^{-i\frac{\pi}{2}}Q_k)$ for $k=1,2,3$. Also, at least for 
$\vartheta$ in a small neighborhood of $\pi/2$, the equations 
$\text{Re}(\text{e}^{-i\vartheta}q_k)= 
\text{Re}(\text{e}^{-i\vartheta}Q_k)$ are still solvable in terms of 
$\alpha,\beta,\gamma$ for given $\alpha_1,\dots,\alpha_6$ as above. But the 
image of 
$\{(\alpha,\beta)\in (0,\pi)^2 : \alpha+\beta<\pi\}$ under 
\[G(\alpha,\beta)=\begin{pmatrix} \log\frac{\sin\beta}{\sin\alpha}
\\[1ex]
\log\frac{\sin\alpha}{\sin(\alpha+\beta)}\end{pmatrix}\]
is not convex, see Figure~\ref{FigAng}~(right) (compare also 
to~\cite[Fig.~9]{BPS13}). Thus, there exist configurations with angles 
$\alpha_1,\dots,\alpha_6\in (0,\pi)$ as in Figure~\ref{FigAng} such that the 
system $G(\alpha_4,\alpha_6)+G(\alpha_3,\alpha_2) =2G(\alpha,\beta)$ has no 
solution $\alpha,\beta\in (0,\pi)$ with $\alpha+\beta<\pi$. This is equivalent 
to the fact that for such a configuration the system of equations 
$\text{Re}(\log q_k)=\text{Re}(\log Q_k)$, $k=1,2,3$ have no solution for 
$Q_k$, which 
means that there is no corresponding triangular lattice $TL_\C$ such that the 
discrete immersion from $TL_\C$ to the generalized Doyle spiral is discrete 
$\vartheta$-conformal for this $\vartheta$. Thus, depending on the given 
generalized Doyle spiral, 
the interval for suitable $\vartheta$'s may in some cases be strictly smaller 
than $[0,\pi/2]$.
\end{proof}

\begin{remark}
 In the theory of conformally equivalent triangular lattices, a change of the 
combinatorics by means of edge flipping is required in order to guarantee the 
existence of solutions, see~\cite{GGLSW18,Sp17}. Lemma~\ref{lemdiscconf} 
suggests that such combinatorial changes may also be necessary for discrete
$\vartheta$-conformal maps. 
This demonstrates that discrete $\vartheta$-conformal maps, which interpolate between 
circle patterns and conformally equivalent triangulations, inherit
properties from both theories. 
A change of the 
combinatorics could also prevent triangles and flowers from 
being degenerate as considered in the proof of Lemma~\ref{lempart}.
\end{remark}

 The proof of Lemma~\ref{lemdiscconf} actually shows that if the angles of the 
split quads are all in a suitable neighborhood of $\pi/3$ then the 
corresponding 
generalized Doyle spirals are discrete $\vartheta$-conformal maps for all 
$\vartheta\in[0,\pi/2]$. This observation motivated the introduction and study 
of discrete $\vartheta$-conformal maps.

Furthermore, we can map every flower of a generalized Doyle spiral by a 
M\"obius transformation to a symmetric embedded flower in the sense that 
${\mathcal M}(z_j)=-{\mathcal M}(z_{j+3})$. Then we can observe that the generalized 
Doyle spiral is 
discrete $\vartheta$-conformal for every $\vartheta\in[0,\pi/2]$ if and only if 
this symmetric flower is convex.

Apart from Doyle spirals, we know from Lemma~\ref{lempart} that a conformally 
symmetric discrete immersion is only defined on a part $TL$ of the lattice 
$TL_\C$ and its cross-ratio function $q$ is given by~\eqref{eqpara} on the 
corresponding part of the hexagonal lattice indicated in
Figure~\ref{FigPara}. In particular, the proof of Theorem~\ref{theoabc} shows 
that for the part of the lattice where the conformally 
symmetric discrete immersion is defined, we have $\log a_n=\log 
a+(n-1)\log(abc)$ and analogous formulas for $\log b_l$ and $\log c_m$.

Therefore we deduce that
\begin{itemize}
 \item if $|abc|=1$, then $\text{Re}(\log a_n)= \text{Re}(\log a)=|a|$  and 
analogously for $\log b_l$ and $\log c_m$. If there exists a triangular lattice 
with length cross-ratios $|a|,|b|,|c|$ (see the proof of 
Lemma~\ref{lemdiscconf} for this issue), then the conformally 
symmetric discrete immersion is discrete $\vartheta$-conformal for 
$\vartheta=0$.
\item if $abc\in\R_+=(0,\infty)$ or equivalently 
$\arg(a)+\arg(b)+\arg(c)=2\pi$, the circle 
pattern obtained from adding the circumcircles to all triangles has the same 
intersection angles as the circle pattern built from the circumcircles of the 
triangles in the lattice $TL_\C$ with angles $\alpha=\arg(a)/2$, 
$\beta=\arg(b)/2$ and $\gamma=\arg(c)/2$. In this case, the conformally 
symmetric discrete immersion is discrete $\vartheta$-conformal for 
$\vartheta=\pi/2$.
\item if $\text{Re}(\text{e}^{-i\vartheta} \log(abc))=0$ or equivalently 
$\cos\vartheta\log|abc|+\sin\vartheta\arg(abc)=0$ for some 
$\vartheta\in(0,\pi/2)$, this may be an example of a general discrete 
$\vartheta$-conformal map. In fact, this is the case if 
there exists a triangular lattice $TL_\C$ whose cross-ratio function $Q$ 
satisfies
$\text{Re}(\text{e}^{-i\vartheta} \log Q_1)= \text{Re}(\text{e}^{-i\vartheta} 
\log a)$, $\text{Re}(\text{e}^{-i\vartheta} \log Q_2)= 
\text{Re}(\text{e}^{-i\vartheta} \log b)$, and 
$\text{Re}(\text{e}^{-i\vartheta} 
\log Q_3)= \text{Re}(\text{e}^{-i\vartheta} \log c)$, where $Q_1,Q_2,Q_3$ 
denote the three (in general different) values of the cross-ratio function $Q$ 
on the lattice $TL_\C$.
\end{itemize}

Combining this reasoning with Corollary~\ref{CorLat}, we obtain the following 
immediate consequence.
\begin{corollary}
 Generalized Doyle spirals are the only conformally symmetric triangular meshes 
from discrete $\vartheta$-conformal maps 
of the whole lattice.
\end{corollary}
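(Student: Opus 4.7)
The plan is to treat this corollary as an immediate consequence of Corollary~\ref{CorLat} together with the preceding case analysis. First I would recall that Corollary~\ref{CorLat} already asserts that the only conformally symmetric triangular meshes defined on the whole lattice $TL_\C$ are the generalized Doyle spirals, independently of any $\vartheta$-conformality hypothesis. If $F\colon TL_\C\to\C$ is a discrete $\vartheta$-conformal map whose image $\widehat{TL}=F(TL_\C)$ is conformally symmetric and defined on the whole lattice, then $\widehat{TL}$ is in particular such a mesh, so by Corollary~\ref{CorLat} it must be a generalized Doyle spiral. This gives the ``only'' direction.

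For the nonemptiness of the statement (i.e.\ that such $F$ really exist), I would invoke the three bullet-point cases that precede the corollary. In each case one reads off how the choice of the three cross-ratio parameters $a,b,c\in\C\setminus\R_{\geq 0}$ with $|abc|=1$ (the regime where the discrete immersion does extend to the whole lattice, by Lemma~\ref{lempart}) forces $\widehat{TL}$ to be a generalized Doyle spiral and shows, under the further condition $\mathrm{Re}(\mathrm{e}^{-i\vartheta}\log(abc))=0$, that the map $F$ from a suitable background lattice $TL_\C$ is discrete $\vartheta$-conformal. Lemma~\ref{lemdiscconf} in turn supplies the needed background lattice $TL_\C$, at least for $\vartheta$ in an interval containing $\pi/2$, confirming that generalized Doyle spirals really do arise in this way.

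The main (and essentially only) obstacle is verifying that the hypothesis of the corollary indeed forces the setting of Corollary~\ref{CorLat}; that is, one must check that ``conformally symmetric triangular mesh of the whole lattice'' in the sense of the image of a discrete $\vartheta$-conformal map from $TL_\C$ coincides with the notion used in Corollary~\ref{CorLat}. This is immediate from Definition~\ref{DefM} together with the fact that discrete $\vartheta$-conformal maps are a priori discrete immersions, so the image is an immersed triangulation of the whole lattice, which is exactly the class covered by Corollary~\ref{CorLat}. No new calculation is required: the corollary is a direct specialization of Corollary~\ref{CorLat} to the subclass of meshes that additionally originate from a discrete $\vartheta$-conformal map, and the preceding bullet list shows the subclass is nonempty for appropriate $\vartheta$.
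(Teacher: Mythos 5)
Your proposal is correct and matches the paper's own reasoning: the paper likewise presents this as an immediate consequence of Corollary~\ref{CorLat} combined with the preceding bullet-point analysis of when a conformally symmetric immersion is discrete $\vartheta$-conformal. Your extra care in noting that the $\vartheta$-conformal hypothesis only restricts to a subclass of the meshes already covered by Corollary~\ref{CorLat} (and that Lemma~\ref{lemdiscconf} guarantees nonemptiness) is exactly the content the paper leaves implicit.
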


\subsection{Connection to discrete holomorphic quadratic 
differentials}\label{remquaddiff}

For any smooth one parameter family $g(t):T\to\C$, $t\in(-\varepsilon, \varepsilon)$,
of discrete immersions of a
simply connected triangulation the considerations and results for 
infinitesimal deformations of~\cite[Section~5]{Lam2016} apply. In 
particular, the corresponding cross-ratio functions 
$q(t)$ on the interior edges is a perturbation of 
$q(0)\equiv Q$ 
for some general immersed triangulation $T$. The logarithmic 
derivatives
\begin{equation}\label{eqdefqij}
q_{ij}:=\frac{1}{q(0)([v_i,v_j])} 
\left.\frac{d}{dt}\right|_{t=0} q(t)([v_i,v_j])
\end{equation}
on the interior edges $[v_i,v_j]\in E$ satisfy
\begin{align}\label{eqquaddiff}
 \sum\limits_{v_j:v_j\text{ adjacent to } v_i} q_{ij}&=0 & \text{and}\qquad
 \sum\limits_{v_j:v_j\text{ adjacent to } v_i} \frac{q_{ij}}{v_i-v_j}&=0.
\end{align}
This is a consequence of the generalized versions of~\eqref{eqq1} 
and~\eqref{eqq2} explained in Remark~\ref{remGen} by taking derivatives, see 
also~\cite[Cor.~5.3]{Lam2016}.
Furthermore, for a family $g(t)$ of discrete $\vartheta$-conformal maps we 
deduce that $q_{ij}\in i\text{e}^{i\vartheta}\R$. This is analogous to the two 
cases for conformal equivalence ($q_{ij}\in i\R$) and circle patterns 
($q_{ij}\in \R$) considered in~\cite{Lam2016}. Therefore we may also call 
$q_{ij}$ a {\em discrete holomorphic quadratic differential}. In view 
of~\cite{Lam18}, we can interpret the cases for $\vartheta\not=0,\pi/2$ as 
corresponding to the associated family of minimal surfaces.

For the special example of generalized Doyle spirals,
a one parameter family of discrete  
$\vartheta$-conformal maps which are generalized Doyle spirals may be easily 
constructed explicitly. 
Start with a 
triangular lattice $TL$ with cross-ratio function 
$Q:EL\to\{A,B,C\}\subset\C\setminus\R_{\leq 0}$. In particular, $ABC=1$. For 
$\vartheta\in[0,\pi/2]$, set $R(A,\vartheta)=\text{Re} 
[\text{e}^{-i\vartheta}\log(A)]$ and $I(A,\vartheta)=\text{Im}
[\text{e}^{-i\vartheta}\log(A)]$ and define $R(B,\vartheta)$, 
$R(C,\vartheta)$ and $I(B,\vartheta)$, $I(C,\vartheta)$ analogously. Let 
$a,b,c\in\C\setminus\R_{\leq 0}$ be 
the cross-ratio function of another triangular lattice. Then there exists a one 
parameter family of generalized Doyle spirals $\widehat{T}(t)$ with cross-ratio 
function $q(t):EL\to\{q_A(t),q_B(t),q_C(t)\}$, where 
\[q_A(t)=\text{exp}(R(A,\vartheta)\text{e}^{i\vartheta} 
+i\text{e}^{i\vartheta}( 
t\text{Im}[\text{e}^{-i\vartheta}\log(a)]+(1-t)I(A,\vartheta)))\] 
and $q_B(t)$ and $q_C(t)$ are defined analogously.  
The logarithmic derivatives $q_{ij}$ defined by~\eqref{eqdefqij} 
satisfy~\eqref{eqquaddiff} and additionally
$\sum\limits_{[v_j,v_j]\text{ incident to triangle } \Delta} q_{ij}=0$ for all 
triangular faces $\Delta$
of $TL_\C$. This last property means that this discrete holomorphic quadratic 
differential is {\em integrable}. 

\begin{corollary}
Every integrable discrete holomorphic quadratic differential on a triangular 
lattice $TL_\C$ arises from a one parameter family of discrete  
$\vartheta$-conformal maps which are generalized Doyle spirals.
\end{corollary}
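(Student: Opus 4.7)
The plan is to invert the explicit construction given in the paragraph immediately preceding the corollary. Given an integrable DHQD $q_{ij}$ on $TL_\C$, I aim to recover three complex numbers $q_A, q_B, q_C$ (one per parallel class of edges) with $q_A+q_B+q_C=0$ and $q_I\in i\text{e}^{i\vartheta}\R$, and then realise the given DHQD as the logarithmic derivative at $t=0$ of the family of Doyle spirals whose cross-ratios are $Q_I(t):=Q_I\exp(t q_I)$, where $Q_A,Q_B,Q_C$ are the cross-ratios of $TL_\C$ itself. The substantial new ingredient is to prove that an integrable DHQD on $TL_\C$ is automatically constant on each of the three parallel classes of edges; once this is established, the rest is a direct application of the construction above.

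For the constancy step, I would label the vertices of $TL_\C$ by $\Z^2$ via lattice generators, and encode $q_{ij}$ as three lattice functions $a,b,c:\Z^2\to\C$ on the three parallel edge classes. The two vertex conditions in~\eqref{eqquaddiff}, together with the integrability condition applied to the two triangles $\Delta_+$ and $\Delta_-$ adjacent at a given lattice point, yield four local linear equations in $a,b,c$ with constant coefficients in the lattice position $(n,m)$. Fourier transforming on $\Z^2$ turns these into a homogeneous $4\times 3$ system in $(\hat a,\hat b,\hat c)(p,q)$ at each frequency $(p,q)\in(\R/2\pi\Z)^2$. Using the identity $\omega(\omega-1)=-1$ for $\omega=\text{e}^{i\pi/3}$ to simplify the second vertex condition, one checks in cases (the generic case $\text{e}^{ip}\neq 1\neq \text{e}^{iq}$, together with the degenerate cases $\text{e}^{ip}=1$ or $\text{e}^{iq}=1$) that the only non-trivial solutions occur at $(p,q)=(0,0)$ and are parameterised exactly by the single relation $\hat a+\hat b+\hat c=0$. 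Thus $q_{ij}$ is constant on each parallel class, with $q_A+q_B+q_C=0$ from integrability around any single triangle, and the $\vartheta$-conformality built into the definition of the DHQD carries over to each $q_I\in i\text{e}^{i\vartheta}\R$.

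With $q_A,q_B,q_C$ in hand, the family $Q_I(t):=Q_I\exp(t q_I)$ manifestly preserves the product $Q_AQ_BQ_C\equiv 1$ and fixes $\text{Re}(\text{e}^{-i\vartheta}\log Q_I(t))$, so by Lemma~\ref{lemDoyle} it defines, for $t$ in a small neighbourhood of $0$, a one-parameter family $\widehat T(t)$ of $\vartheta$-conformal generalised Doyle spirals; differentiating at $t=0$ recovers the given $q_{ij}$. The hard part will be the case analysis in the Fourier step: one must verify that the two integrability equations (coming from $\Delta_+$ and $\Delta_-$) together with the two vertex equations are, at every non-zero frequency, algebraically independent enough to force the trivial solution. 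This is a short calculation but sensitive to the precise coefficients, and the degenerate frequencies along $\text{e}^{ip}=1$ and $\text{e}^{iq}=1$ must be treated separately.
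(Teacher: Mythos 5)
Your reduction of the corollary to the claim that an integrable discrete holomorphic quadratic differential on $TL_\C$ must be constant on each of the three parallel edge classes is where the proposal breaks down: that claim is false without further hypotheses. Label the vertices of the lattice generated by $1$ and $\tau=i$ by $(n,m)\in\Z^2$ and set $q=n$ on the edge $[(n,m),(n+1,m)]$, $q=-m$ on $[(n,m),(n,m+1)]$, and $q=m-n$ on the diagonal $[(n+1,m),(n,m+1)]$. The sum over every up- and down-triangle vanishes, so this differential is integrable; at any vertex $(n,m)$ the plain sum is $n+(n-1)-m-(m-1)+(m-n+1)+(m-1-n)=0$ and the weighted sum in~\eqref{eqquaddiff} collapses to $-1+\tfrac{1}{i}+\tfrac{2}{1-i}=0$, so both vertex conditions hold; and all values are real, i.e.\ lie in $i\mathrm{e}^{i\vartheta}\R$ for $\vartheta=\pi/2$. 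This integrable differential is not constant on parallel classes and takes infinitely many values, so it cannot be the logarithmic derivative of any one-parameter family of generalized Doyle spirals, whose cross-ratio functions (hence whose logarithmic derivatives) take only three values. Moreover, a short computation with an ansatz linear in $(n,m)$ shows that for every lattice shape $\tau$ and every $\vartheta$ the space of integrable differentials with values in $i\mathrm{e}^{i\vartheta}\R$ contains such non-constant linear solutions beyond the constants, so the phenomenon is not special to this example. Consequently your main lemma needs an extra hypothesis (boundedness, or translation invariance of $q$) that neither you nor the statement of the corollary imposes.

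The reason your Fourier strategy does not detect this is structural: Fourier analysis on the torus $(\R/2\pi\Z)^2$ only captures bounded (or tempered) lattice functions, and even in that setting a solution whose spectrum is concentrated at $(p,q)=(0,0)$ need not be constant --- derivatives of the delta distribution at the origin correspond exactly to solutions polynomial in $(n,m)$, which is precisely what the counterexample is. Your case analysis at non-zero frequencies is consistent with this (the characteristic variety does meet the torus only at the origin), but the conclusion ``hence $q$ is constant on each parallel class'' does not follow, and in fact fails. For comparison, the paper offers no argument for the converse direction at all: the corollary is stated as an immediate consequence of the preceding explicit construction, which only shows that the constructed Doyle-spiral families produce integrable differentials constant on parallel classes. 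So you correctly identified where the real mathematical content lies, but the step you propose cannot be carried out as stated; it could only be salvaged by adding a hypothesis such as boundedness of $q_{ij}$ (then spectrum at the origin plus boundedness does force constancy), or by enlarging the class of deformations on the right-hand side beyond Doyle spirals, since the linear solutions have exactly the shape of infinitesimal deformations through general conformally symmetric lattices, cf.~\eqref{eqpara}.
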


\subsection{Trigonometry with parameter $\vartheta$}\label{secGeomTriangle}

It is well known that a triangle is determined up to similarity transformations by its angles or by 
the ratios of its edge lengths. 
These two well known examples will correspond to our cases $\vartheta=0$ and 
$\vartheta=\pi/2$. For 
$\vartheta\in(0,\pi/2)$ we switch to unusual parameters for the 
construction of triangles.

As for the definition of the cross-ratio in~\eqref{eqdefQ}, we mainly focus on notions based on 
differences like 
$w_{ij}=v_j-v_i$ etc. These are 
associated to directed edges of the embedded non-degenerate counterclockwise 
oriented trianlges $\Delta[v_i,v_j,v_k]$ of the given triangulation $T$. 
Similarly, we consider the directed edges $z_{ij}=z_j-z_i$ of the 
non-degenerate image 
triangles $F(\Delta[v_i,v_j,v_k])= \Delta[z_i,z_j,z_k]$ for any 
discrete immersion $F$. These complex numbers satisfy the obvious closing 
condition $z_{ij}+z_{jk}+z_{ki}=0$. This algebraic equation is 
naturally studied in $\CP^2$ and this means that we consider triangles up to global
similarity transformations which does not change the cross-ratios $q$. When 
dealing with similarity classes of embedded triangles, one natural approach is 
to work with the quotients
$\tau_k^{ij}=z_{kj}/z_{ki}\in\Ha=\{z\in\C :\text{Im}(z)>0\}$ in the upper 
half-plane of $\C$ or their
logarithms $\log \tau_k^{ij}$ as for example in~\cite[Section~1.5]{CM12}. They 
satisfy the relations
 $\tau_k^{ij}\tau_i^{jk}\tau_j^{ki}=-1$ and
$\tau_j^{ki}\tau_i^{jk}= \tau_k^{ij}-1$.
Thus, if we denote $\tau_k^{ij}=\tau$ we have $\tau_i^{jk}=1/(1-\tau)$ and 
$\tau_j^{ki}=1-1/\tau$. Furthermore, their logarithms satisfy
\begin{align}\label{eqlogtau}
 \log\tau_k^{ij}+ \log\tau_i^{jk}+ \log\tau_j^{ki}=\pi i.
\end{align}
In view of~\eqref{eqdefdiscconf}, we choose to work with these logarithmic 
variables. As scale-rotations may be parametrized via $z\mapsto z\cdot 
\text{e}^A$ for $A\in\C$, these variables belong to 
the Lie algebra of scale-rotations and
are usually split into real and imaginary parts which correspond to scaling
and rotation respectively. The right hand side of~\eqref{eqlogtau} fits to this 
splitting. For our purposes we choose another orthonormal basis 
of this Lie algebra, namely $\text{e}^{i\vartheta}$ and 
$i\text{e}^{i\vartheta}$. To simplify further considerations, we introduce 
parameters relative to a given configuration. In particular, instead of 
$\log\tau_k^{ij}= \log(z_{kj}/z_{ki})$, we focus on 
$\zeta_k^{ij}= \log\left( \frac{z_{kj}}{z_{ki}} \frac{w_{ki}}{w_{kj}} \right) := \log 
\frac{z_{kj}}{z_{ki}} -\log \frac{w_{kj}}{w_{ki}}$. 
These variables satisfy 
\begin{align}\label{eqzeta}
\zeta_k^{ij}+ \zeta_i^{jk}+ \zeta_j^{ki}= 0.
\end{align}
 and we will 
split them according to the orthonormal basis $\text{e}^{i\vartheta}$ and 
$i\text{e}^{i\vartheta}$, that is $\zeta_k^{ij}= \log\left(  
\frac{z_{kj}}{z_{ki}} \frac{w_{ki}}{w_{kj}} \right)
=\text{e}^{i\vartheta} (i\hat{\zeta}_{ij}+\hat{\hat{\zeta}}_{ij})$ with real 
$\hat{\zeta}_{ij},\hat{\hat{\zeta}}_{ij}\in\R$. 
In the triangular graph $G^\Delta_M$ whose vertices are the midpoints 
$M_{ij}=(v_i+v_j)/2$ of the edges $[v_i,v_j]$ of the triangle 
$\Delta[v_i,v_j,v_k]$, the variables $\hat{\zeta}_{ij}= -\hat{\zeta}_{ji}$ and 
$\hat{\hat{\zeta}}_{ij}= -\hat{\hat{\zeta}}_{ji}$ may be interpreted as a 
$1$-form on the edges $[M_{kj},M_{ki}]$. Condition~\eqref{eqzeta} shows that 
this $1$-form is closed. Therefore, we can integrate and obtain 
$\zeta_k^{ij}= \text{e}^{i\vartheta}(i(\omega_{kj}-\omega_{ki}) 
+(\nu_{kj}-\nu_{ki}))$. The real variables $\omega_{ij},\nu_{ij}\in\R$ 
are associated to the midpoints $M$ and thus to the original edges of the 
triangle $\Delta[v_i,v_j,v_k]$. They 
are unique up to a common constant, respectively. Furthermore, if we switch 
back to our original triangles $\Delta[v_i,v_j,v_k]$ and $\Delta[z_i,z_j,z_k]$, 
we deduce that by a suitable choice of these constants we can express
\begin{equation}\label{defoij}
 z_{ij}=w_{ij}\cdot \text{exp}(\text{e}^{i\vartheta}(i\omega_{ij}+\nu_{ij})).
\end{equation}
Thus, $\text{exp}(\text{e}^{i\vartheta}(i\omega_{ij}+\nu_{ij}))$ describes the 
scale-rotation which transforms $w_{ij}$ into $z_{ij}$.
For $\vartheta=\pi/2$, the variables $\omega_{ij}=\log|z_{ij}|-\log|w_{ij}|$ 
and $\nu_{ij} =\arg(z_{ij})-\arg(w_{ij})$ are just the 
(negative) logarithmic scale factor of the edge $z_{ij}$ and its relative 
rotation angle with respect to $w_{ij}$. This holds also true for $\vartheta=0$ 
where the roles of logarithmic scale factor and rotation angle are interchanged. 
In fact for $\vartheta\in(0,\pi/2)$, the new variables $\omega_{ij},\nu_{ij}$ 
are linear combinations of these two known parameters.
Note that the values of $\omega_{ij}$ and/or $\nu_{ij}$ are not uniquely 
defined in $\R$ by~\eqref{defoij}. The same result is obtained for 
$\omega_{ij}+2\pi\cos\vartheta\cdot K$ and $\nu_{ij}+2\pi\sin\vartheta\cdot K$ 
for any $K\in\Z$. In order to simplify our calculations, we will always 
choose the values $\omega_{ij},\nu_{ij}$ such that the angle 
$\alpha_k^{\Delta[z_i,z_j,z_k]}$ in the image 
triangle $\Delta[z_i,z_j,z_k]$ at the vertex $z_k$ can be expressed as
\begin{equation}\label{eqdefalpha}
 \alpha_k^{\Delta[z_i,z_j,z_k]}= \alpha_k^{\Delta[v_i,v_j,v_k]}+ 
\cos\vartheta(\omega_{ki}-\omega_{jk}) +\sin\vartheta(\nu_{ki}-\nu_{jk}).
\end{equation}
This defines our variables a the given triangle uniquely up to a global 
constant, which will not be 
important for our considerations as we mainly work with differences.

\section{A variational principle for discrete $\vartheta$-conformal 
maps}\label{SecVari}

Recall that $T$ is an immersed triangulation in the plane. For simplicity, we 
assume that $T$ is simply connected. Let $F:T\to\C$ be
a discrete immersion with image $\widehat{T}$.
In this section we prove the existence of a variational principle 
for discrete $\vartheta$-conformal maps. 

Similarly as for circle patterns and conformally equivalent triangulations, 
discrete $\vartheta$-conformal maps are associated to a special 
description of triangles which has been introduced in 
Section~\ref{secGeomTriangle}. We use this for a reformulation of 
condition~\eqref{eqdefdiscconf} which in turn will be shown to be 
a partial derivative of a locally defined functional 
${\mathcal F}_\vartheta(\omega)$.
Thus we aim at the following theorem, which is a direct consequence of
Theorems~\ref{theoclosed}--\ref{theoconstruct} below.

\begin{theorem}\label{theovari}
 For any discrete $\vartheta$-conformal map $F:T\to\widehat{T}$, the 
corresponding variables $\omega\in\R^{|E|}$ constitute the unique maximizer (up 
to a global constant) of a locally concave functional ${\mathcal F}_\vartheta$.

Conversely, every maximizer of ${\mathcal F}_\vartheta$ on a simply connected 
triangulation $T$ corresponds to a $\vartheta$-conformal map 
$F:T\to\widehat{T}$.
\end{theorem}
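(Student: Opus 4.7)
The plan is to reduce Theorem~\ref{theovari} to three intermediate facts which I would state and prove as Theorems~\ref{theoclosed}--\ref{theoconstruct}: (i) a reformulation of the defining condition~\eqref{eqdefdiscconf} as the vanishing of an explicit edge one-form $\alpha$ on the space of $\omega$-parameters, (ii) closedness of $\alpha$ on each flower, which yields a local antiderivative $\mathcal{F}_\vartheta$, and (iii) strict concavity of $\mathcal{F}_\vartheta$ modulo a one-dimensional gauge, which delivers existence and uniqueness of the maximizer.

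First I would rewrite~\eqref{eqdefdiscconf} using the parametrization $z_{ij}=w_{ij}\exp(\mathrm{e}^{i\vartheta}(i\omega_{ij}+\nu_{ij}))$ from~\eqref{defoij}. A direct computation for an interior edge $[v_i,v_j]$ with adjacent triangles $\Delta[v_i,v_l,v_j]$ and $\Delta[v_i,v_j,v_k]$ gives $\mathrm{e}^{-i\vartheta}(\log q-\log Q) = i(\omega_{il}+\omega_{jk}-\omega_{lj}-\omega_{ki})+(\nu_{il}+\nu_{jk}-\nu_{lj}-\nu_{ki})$, so~\eqref{eqdefdiscconf} is equivalent to the vanishing of the $\nu$-combination on the right. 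The triangle closing relations $z_{ij}+z_{jk}+z_{ki}=0$ determine the $\nu$-variables as a local implicit function of the $\omega$-variables, and substituting them back expresses the condition purely in $\omega$. In terms of~\eqref{eqdefalpha}, this reads as the equality at the two endpoints of $[v_i,v_j]$ of certain $\vartheta$-rotated sums of image-triangle angles, specializing to the circle-pattern angle condition at $\vartheta=\pi/2$ and to the BPS edge-length condition at $\vartheta=0$.

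Next I would construct the functional by integration. For each triangle $\Delta$, assign the one-form whose coefficient of $d\omega_e$ is (up to an explicit affine term) the image angle opposite to $e$; summed over triangles, this defines a one-form $\alpha$ on $\mathbb{R}^{|E|}$ whose components at interior edges are precisely the expressions derived in the previous step. The central computation, Theorem~\ref{theoclosed}, is that $\alpha$ is closed. This amounts to symmetry of the $3\times 3$ Jacobian of the image angles of a single triangle in the variables $(\omega_{ij},\omega_{jk},\omega_{ki})$ after eliminating $\nu=\nu(\omega)$ via the closing conditions; the orthonormal basis $(\mathrm{e}^{i\vartheta},i\mathrm{e}^{i\vartheta})$ introduced in Section~\ref{secGeomTriangle} was engineered precisely so that this symmetry holds. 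Since $T$ is simply connected, $\alpha=d\mathcal{F}_\vartheta$ for a globally defined smooth $\mathcal{F}_\vartheta$, whose critical points are by construction the discrete $\vartheta$-conformal maps.

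Finally, I would compute the Hessian of $\mathcal{F}_\vartheta$. Differentiating the image angles with respect to the $\omega_e$ and using the implicit $\nu=\nu(\omega)$, the Hessian should take the form of a symmetric matrix of cotangent-type edge weights, interpolating smoothly between the weighted cotan-Laplacians that appear in~\cite{BS02} and~\cite{BPS13}. I expect it to be negative semidefinite with a one-dimensional kernel, spanned by the uniform shift $\omega_e\mapsto\omega_e+c$, which corresponds to a global similarity of $\widehat{T}$; this gives uniqueness of the maximizer up to that gauge. The converse direction (Theorem~\ref{theoconstruct}) is then obtained by reading off the image-triangle shapes from a maximizer and gluing them along shared edges, simple connectedness of $T$ ensuring a globally defined discrete immersion $F$. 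The main obstacle is step two: because no closed-form formula for $\mathcal{F}_\vartheta$ is available when $\vartheta\in(0,\pi/2)$, closedness of $\alpha$ and the correct sign of the Hessian must be established symbolically, using only the implicit dependence of the image angles on $\omega$ through the triangle closing conditions rather than by inspection of an explicit integrand.
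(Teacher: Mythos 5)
Your proposal follows essentially the same route as the paper: you rewrite condition~\eqref{eqdefdiscconf} in the $(\omega,\nu)$-parametrization of~\eqref{defoij} as the vanishing of the combination $\nu_{il}-\nu_{lj}+\nu_{jk}-\nu_{ki}$, package these expressions into an edge one-form whose closedness (the paper's Theorem~\ref{theoclosed}) is verified symbolically from the triangle closing condition with $\nu=\nu(\omega)$ implicit, integrate on the simply connected $T$ to get ${\mathcal F}_\vartheta$, identify the Hessian as a negative semidefinite cotan form with kernel spanned by the constant shift (Theorem~\ref{theoconcave}), and recover the converse by gluing the triangles determined by a maximizer, with the flower/monodromy consistency guaranteed by~\eqref{eqcondnu} and simple connectedness (Theorem~\ref{theoconstruct}). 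This is the paper's own decomposition, so no substantive differences to report.
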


\subsection{A single triangle in new variables}\label{subsecvari}

As in Section~\ref{secGeomTriangle}, let $w_{ij}=v_j-v_i$ denote the directed 
edges of an embedded non-degenerate trianlges 
$\Delta[v_i,v_j,v_k]$. Similarly, we consider 
the directed edges $z_{ij}=z_j-z_i$  of the non-degenerate image 
triangles $F(\Delta[v_i,v_j,v_k])= \Delta[z_i,z_j,z_k]$ for any 
discrete immersion $F$. Now, we express 
the obvious closing condition $z_{ij}+z_{jk}+z_{ki}=0$  as
\begin{equation}\label{eqtriangleom}
 w_{ij}\cdot \text{exp}(\text{e}^{i\vartheta}(i\omega_{ij}+\nu_{ij})) 
+w_{jk}\cdot \text{exp}(\text{e}^{i\vartheta}(i\omega_{jk}+\nu_{jk}))  
+w_{ki}\cdot \text{exp}(\text{e}^{i\vartheta}(i\omega_{ki}+\nu_{ki})) =0.
\end{equation}
In the following, we consider the $\omega$'s as our main variables and the 
$\nu$'s as dependent functions.
By the implicit function theorem, we can assume that the $\nu$'s 
depend smoothly on the given variables $\omega$ in a suitable neighborhood 
of a solution. 

As in the cases for given edge 
lengths or angles, not all choices of $\omega$'s correspond to a 
triangle. This means, that for some choices of $\omega$'s there do 
not exist any real $\nu$'s  such that~\eqref{eqtriangleom} holds.
We briefly describe the set of suitable choices of $\omega$'s for the 
normalized case $z_{ij}=w_{ij}=1$, $z_{jk}=w_{jk}\cdot 
\text{exp}(\text{e}^{i\vartheta}(i\omega_{1}+ 
\nu_{1})) =:w_{jk}\cdot \sigma_1$ and $z_{ki}=w_{ki}\cdot
\text{exp}(\text{e}^{i\vartheta}(i\omega_{2}+ \nu_{2}))=:w_{ki}\cdot \sigma_2$.

Associate to a non-zero complex number $w$ 
the set $S_w=\{(\log|w|,\arg w+2\pi k) : k\in\Z\}\subset\R^2$ of all 
values of the logarithm $\log w$ (i.e.\ possible polar 
coordinates). We apply the map $w\mapsto S_w$ to the edge of the image
triangle and consider in particular
$L:(\sigma_{1},\sigma_{2})\mapsto S_{\sigma_{1}}\times S_{\sigma_{2}}$. 
Then the set  $E_\vartheta$
of all suitable choices for $\omega_1,\omega_2$ is the intersection of the 
plane in $\C^2\cong\R^4$
generated by $\left(\begin{smallmatrix} -\sin\vartheta \\ \cos\vartheta 
\end{smallmatrix}\right)\times \{0\}$ and $\{0\}\times 
\left(\begin{smallmatrix} -\sin\vartheta \\ \cos\vartheta 
\end{smallmatrix}\right)$ with the image of the set 
$\{(\sigma_{1},\sigma_{2})\in\C^2: 1+w_{jk}\cdot 
\sigma_{1}+w_{ki}\cdot \sigma_{2}=0 \text{ and } w_{jk}\cdot 
\sigma_{1},-w_{ki}\cdot \sigma_{2}\in\Ha \}$ 
under the map $L$.
As all sets $S_w$ have a translational
period of $2\pi$ in the second component, the set $E_\vartheta$
is $(2\pi\cos\vartheta)$-periodic if $\vartheta\not= \pi/2$.
By the inverse function theorem, we easily deduce that the set 
$E_\vartheta$ is open.

\begin{remark}
 Consider as above the normalized directed edges $z_{ij}=w_{ij}=1$, 
$z_{jk}=w_{jk}\cdot \text{exp}(\text{e}^{i\vartheta}(i\omega_{1}+ 
\nu_{1}))$, $z_{ki}=w_{ki}\cdot
\text{exp}(\text{e}^{i\vartheta}(i\omega_{2}+ \nu_{2}))$ which (possibly) build 
a non-degenerate triangle with counterclockwise orientation.  
In this description, the existence of suitable $\nu_1,\nu_2$ for given 
$\omega_1,\omega_2$ is equivalent to the fact that the two spirals $s_1(t)= 
a\cdot \text{exp}(\text{e}^{i\vartheta}t)$ and $s_2(t)= b\cdot 
\text{exp}(\text{e}^{i\vartheta}t)$, where $a=w_{jk}\cdot 
\text{exp}(i\text{e}^{i\vartheta}\omega_{1})$, $b=w_{ki}\cdot
\text{exp}(i\text{e}^{i\vartheta}\omega_{2})$ and $t\in\R$, intersect 
transversally in the upper half-plane $\Ha$. Depending on $a$ and $b$, 
there may exist several intersection points for some values of 
$\omega_1,\omega_2$. In these cases, we obtain several (smooth) functions 
$\nu_j(\omega)$.

If we only consider (small) variations of a given configuration, we may assume 
that $\omega$, and thus $\nu$, is in a neighborhood of zero and therefore 
obtain a well-defined function $\nu(\omega)$.
In the general case, we have to consider several functions $\nu^{(l)}(\omega)$ 
on different, overlapping domains.
\end{remark}

\subsection{Condition for discrete $\vartheta$-conformal maps in new 
variables}

Our next goal is to reformulate condition~\eqref{eqdefdiscconf}. To this end, 
it is sufficient to consider the embedded configuration of two adjacent 
triangles $\Delta[v_i,v_j,v_k]$ and  
$\Delta[v_i,v_l,v_j]$ and their images $F(\Delta[v_i,v_j,v_k])= 
\Delta[z_i,z_j,z_k]$ and $F(\Delta[v_i,v_l,v_j])= 
\Delta[z_i,z_l,z_j]$ for any discrete immersion $F$. The cross-ratios
$q_{ij}=q([v_i,v_j])$ defined by~\eqref{eqdefq} can be expressed as
\begin{equation}
 q_{ij}= \frac{w_{il}w_{jk}}{w_{lj}w_{ki}} \cdot 
\frac{\text{exp}(\text{e}^{i\vartheta}(i\omega_{il}+ \nu_{il}))}{
\text{exp}(\text{e}^{i\vartheta}(i\omega_{lj}+ \nu_{lj}))} \cdot 
\frac{\text{exp}(\text{e}^{i\vartheta}(i\omega_{jk}+ \nu_{jk}))}{
\text{exp}(\text{e}^{i\vartheta}(i\omega_{ki}+ \nu_{ki}))}.
\end{equation}
As $Q([v_i,v_j])= \frac{w_{il}w_{jk}}{w_{lj}w_{ki}}$ we deduce that 
\begin{equation*}
 \text{e}^{-i\vartheta}\log q_{ij}= \text{e}^{-i\vartheta}\log Q([v_i,v_j])
+i (\omega_{il}-\omega_{lj}+\omega_{jk}-\omega_{ki}) + \nu_{il}-\nu_{lj} 
+\nu_{jk}-\nu_{ki}.
\end{equation*}
Thus $F$ is discrete $\vartheta$-conformal on this minimal example of two 
adjacent triangles if and only if
\begin{equation}\label{eqcondnu}
 \nu_{il}-\nu_{lj} +\nu_{jk}-\nu_{ki}=0
\end{equation}
holds. 

Recall that the $\nu$'s depend on the variables $\omega$ in the respective 
triangles. We will emphasize this by writing $\nu_{il}^{\Delta[v_i,v_l,v_j]}$ 
etc. In particular, the two values of $\nu$ associated to the same 
edge will in general be different: $\nu_{ij}^{\Delta[v_i,v_l,v_j]} \not= 
\nu_{ij}^{\Delta[v_i,v_j,v_k]}$.

\subsection{Brief review on the known cases for $\vartheta=0$ and 
$\vartheta=\pi/2$}\label{SecReview}

The abstract variables $\omega_{ij}$ and $\nu_{ij}$ and 
condition~\eqref{eqcondnu} become geometrically more specific if 
we consider the case of circle patterns ($\vartheta=\pi/2$) and conformally 
equivalent triangulations ($\vartheta=0$). As in the general case studied in 
Section~\ref{secGeomTriangle}, we regard the embedded, counterclockwise oriented 
triangle 
$\Delta[z_i,z_j,z_k]$ as obtained from a given (embedded, counterclockwise 
oriented) triangle $\Delta[v_i,v_j,v_k]$ and consider the relativ changes of 
the directed edges, edge lengths and angles.

\subsubsection{Circle patterns}
In the case $\vartheta=\pi/2$, the free variables $\omega_{ij}$ are the 
logarithmic length changes $-\omega_{ij}=  \log|z_{ij}|-\log|w_{ij}|$ 
of the edges. The dependent variables are the relative counterclockwise  
rotation angles 
$\nu_{ij} =\arg(z_{ij})- \arg(w_{ij})$ (modulo $2\pi$) of the edges. For two 
incident edges in a triangle $\Delta[z_i,z_j,z_k]$, the differences 
$\nu_{ki}-\nu_{jk}$  (modulo $2\pi$) give the change of the angle 
$\alpha_k^{\Delta[z_i,z_j,z_k]} -\alpha_k^{\Delta[v_i,v_j,v_k]}$ opposite to 
the edge $ij$. Thus condition~\eqref{eqcondnu} expresses in this case that the 
sum of opposite angles in the two incident triangles sharing an edge does 
not change. This encodes the usual condition for circle patterns that 
intersection angles are preserved.

Let $\Phi_{ij}$ be the sums of opposite angles in two adjacent embedded
triangles sharing the edge $[v_i,v_j]$. Recall that this is the exterior 
intersection angle between the corresponding circumcircles of the triangles. 
For 
a boundary edge we set $\Phi_{ij}$ equal to the opposite angle.
Then as detailed in~\cite[App.~C]{BPS13}, a corresponding concave functional is 
given by
\[{\mathcal F}_{\frac{\pi}{2}}(\omega)= -\sum\limits_{\Delta[v_i,v_j,v_k]} 
\hat{V}(\log|w_{ij}|-\omega_{ij}, \log|w_{jk}|-\omega_{jk}, 
\log|w_{ki}|-\omega_{ki}) - \sum\limits_{[v_i,v_j]} \Phi_{ij}\omega_{ij},\]
where the first sum is taken over all triangles and the 
second sum over all edges of the given triangulation $T$. 

The function $\hat{V}$ is defined for variable $(\rho_{12}, 
\rho_{23},\rho_{31})\in \R^3$ such that the positive numbers 
$\ell_{12}=\text{e}^{\rho_{12}}$, $\ell_{23}=\text{e}^{\rho_{23}}$, $\ell_{31}
=\text {e}^{\rho_{31}}$ satisfy the triangle inequalities ($\ell_{ij}+ 
\ell_{jk}\geq \ell_{ki}$). In the triangle with these edge lengths 
denote by 
$\alpha_i^{jk}(\rho_{jk},\rho_{ij},\rho_{ki})$ the angle at vertex $i$ opposite 
to the edge of lengths $\ell_{jk}=\text{e}^{\rho_{jk}}$. Then, 
\begin{multline}\label{defV}
\hat{V}(\rho_{12}, \rho_{23},\rho_{31})= 
\rho_{23}\;\alpha_1^{23}(\rho_{23},\rho_{31},\rho_{12}) +
\rho_{31}\;\alpha_2^{31}(\rho_{31},\rho_{12},\rho_{23}) +
\rho_{12}\;\alpha_3^{12}(\rho_{12},\rho_{23},\rho_{31}) \\
\quad + 2\ELL(\alpha_1^{23}(\rho_{23},\rho_{31},\rho_{12}))+ 
2\ELL(\alpha_2^{31}(\rho_{31},\rho_{12},\rho_{23}))+ 
2\ELL(\alpha_3^{12}(\rho_{12},\rho_{23},\rho_{31})),
\end{multline}
where $\ELL(x)=-\int_0^x \log|2\sin(t)|dt$ is Milnor's Lobachevsky function.

According to the domain of definition of $\hat{V}$,
the functional ${\mathcal F}_{\frac{\pi}{2}}$ has to be considered on the domain
\begin{multline*}
 A=\{ \omega \in\R^{|E|}\ |\ \text{for all edges } [v_i,v_j] \text{ and their 
incident triangles } \Delta[v_i,v_j,v_k] \text{ there holds} \\
|w_{jk}|\text{e}^{-\omega_{jk}} + 
|w_{ki}|\text{e}^{-\omega_{ki}} \geq |w_{ij}|\text{e}^{-\omega_{ij}} \}.
\end{multline*}
Note that the functional ${\mathcal F}_{\frac{\pi}{2}}$ may be extended from the 
domain $A$ to $\R^E$ 
such that the extention is still continuously differentiable and concave, 
see~\cite[Section~4.2]{BPS13}.

\subsubsection{Conformally equivalent triangulations}
The case $\vartheta=0$ may be considered as ``dual'' to circle patterns. The 
free variables $\omega_{ij}$ are the relative counterclockwise rotation angles 
$\omega_{ij} =\arg(z_{ij})- \arg(w_{ij})$ (modulo $2\pi$) of the edges $z_{ij}$.
The dependent variables $\nu_{ij}= \log|z_{ij}|-\log|w_{ij}|$ are the changes 
of the 
logarithmic lengths of the edges. In this case, condition~\eqref{eqcondnu} 
expresses that the logarithm of the length cross-ratio does not change for any 
two incident triangles. 

Denote by $\alpha_i^{\Delta[z_i,z_j,z_k]} =
\alpha_i^{\Delta[v_i,v_j,v_k]}+ \omega_{ki}-\omega_{ij}$ the angle in the 
triangle $\Delta[z_i,z_j,z_k]$ (with counterclockwise orientation) opposite to 
the edge $z_{jk}$. As detailed in~\cite[Sec.~4.3]{BPS13}, a
corresponding concave functional is given by
\begin{align*}
{\mathcal F}_0(\omega)&= \sum\limits_{\Delta[v_i,v_j,v_k]} \Bigl(
 2\ELL(\alpha_i^{\Delta[v_i,v_j,v_k]}+ \omega_{ki}-\omega_{ij}) +
2\ELL(\alpha_j^{\Delta[v_i,v_j,v_k]}+ \omega_{ij}-\omega_{jk}) \\
&\qquad\qquad\qquad +
2\ELL(\alpha_k^{\Delta[v_i,v_j,v_k]}+ \omega_{jk}-\omega_{ki})  \\
&\qquad\qquad\qquad  + (\omega_{ki}-\omega_{ij})\log|w_{jk}| +
(\omega_{ij}-\omega_{jk})\log|w_{ki}| +
(\omega_{jk}-\omega_{ki}) \log|w_{ij}|\Bigr).
\end{align*}
The variables have to be restricted to the domain
\begin{align*}
 C&=\{ \omega\in\R^{|E|}\ |\ \text{for all vertices } v_i \text{ and their 
incident triangles } \Delta[v_i,v_j,v_k]: \\
&\qquad\qquad\qquad\quad \alpha_i^{\Delta[v_i,v_j,v_k]}+ 
\omega_{ki}-\omega_{ij}\in (0,\pi) \}.
\end{align*}

\subsection{The functional ${\mathcal F}_\vartheta(\omega)$ and its relations to 
discrete $\vartheta$-conformal maps}

Let $F:T\to\widehat{T}$ be a  discrete immersion of a simply connected 
triangulation $T$. Following our reasoning in 
Subsection~\ref{subsecvari}, we can define our new variables $\omega$ on
all edges $E$ and obtain corresponding dependent variables $\nu^\Delta(\omega)$ 
on the edges of the respective triangles $\Delta$. We assume that the variables 
$\omega$ vary within a neighborhood of (given) values which give rise to a discrete 
$\vartheta$-conformal map. That is, we consider (small) perturbations of 
a  discrete $\vartheta$-conformal map. In this way, we ensure that the 
$\nu^\Delta(\omega)$'s are well-defined and smooth. 

We will show that condition~\eqref{eqcondnu} is variational. In particular, we 
consider the 1-form
\begin{equation}\label{eqvari}
\Xi_\vartheta(\omega)=\sum_{e=[v_i,v_j]\in E_{int}} 
(\nu_{il}^{\Delta_{ilj}}-\nu_{lj}^{\Delta_{ilj}} 
+\nu_{jk}^{\Delta_{ijk}}-\nu_{ki}^{\Delta_{ijk}})d\omega_e,
\end{equation}
where $\omega\in\R^{|E|}$ denotes the vector of values of $\omega_e$ 
on the edges and the sum is taken over all interior edges $e=[v_i,v_j]$ which 
are 
contained in the two triangles $\Delta_{ilj}=\Delta[v_i,v_l,v_j]$ and 
$\Delta_{ijk}=\Delta[v_i,v_j,v_k]$ such that all edges in the triangles
are enumerated in counterclockwise orientation as in Figure~\ref{Fig2Triang}.
If the 1-form $\Xi_\vartheta$ is closed, then we can locally 
integrate: $\Xi_\vartheta=d{\mathcal F}_\vartheta$ where ${\mathcal 
F}_\vartheta(\omega)$ is some function (defined on a suitable open subset). 
Furthermore, all critical points of ${\mathcal F}_\vartheta$ 
satisfy~\eqref{eqcondnu} for all pairs 
of adjacent triangles. 

\begin{theorem}\label{theoclosed}
 The 1-form $\Xi_\vartheta$ defined by~\eqref{eqvari} in a neighborhood of a 
solution is closed. 
\end{theorem}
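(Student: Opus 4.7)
The plan is to verify closedness by reducing to a per-triangle computation that exploits a symmetry inherent to the closing condition of a triangle. I first regroup $\Xi_\vartheta$ as a sum over triangles: for each counterclockwise-oriented face $\Delta=\Delta[v_i,v_j,v_k]$ define
\[
\xi^\Delta = (\nu_{jk}^\Delta-\nu_{ki}^\Delta)\,d\omega_{ij} + (\nu_{ki}^\Delta-\nu_{ij}^\Delta)\,d\omega_{jk} + (\nu_{ij}^\Delta-\nu_{jk}^\Delta)\,d\omega_{ki},
\]
viewed as a 1-form on the three-variable slice spanned by the edges of $\Delta$. Only differences of $\nu^\Delta$ appear, so the coefficients are invariant under the one-parameter global-shift ambiguity in the implicit function $\nu^\Delta(\omega)$. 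Since each coefficient $C_e$ of $\Xi_\vartheta$ is the sum of such contributions from the two adjacent triangles and any two distinct edges lie in at most one common face, it is enough to prove $d\xi^\Delta=0$ for a single triangle: the required symmetry $\partial C_e/\partial\omega_{e'}=\partial C_{e'}/\partial\omega_e$ for all interior edges $e,e'$ then follows immediately.

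For the main computation I would differentiate the closing condition $z_{ij}+z_{jk}+z_{ki}=0$, where $z_e=w_e\exp(e^{i\vartheta}(i\omega_e+\nu_e))$, obtaining the complex identity $\sum_e z_e(i\,d\omega_e+d\nu_e)=0$. Its real and imaginary parts form a rank-two linear system on the six differentials; solving for the $d\nu_e$'s modulo the one-dimensional kernel spanned by $(1,1,1)$ and using $\bar z_{ij}+\bar z_{jk}+\bar z_{ki}=0$ to eliminate the free parameter yields the explicit formula
\[
d(\nu_{jk}^\Delta-\nu_{ki}^\Delta) = -\frac{1}{D}\sum_{e}\mathrm{Re}(\bar z_{ij}z_e)\,d\omega_e,
\]
together with its two cyclic analogues, where $D=\mathrm{Im}(\bar z_{jk}z_{ki})$ equals twice the signed area of the image triangle and is therefore nonzero by nondegeneracy. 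The key observation is that the matrix $M_{ee'}=\tfrac{1}{D}\mathrm{Re}(\bar z_ez_{e'})$ governing these partial derivatives is symmetric, since $\mathrm{Re}(\bar ab)=\mathrm{Re}(\bar ba)$ for any complex $a,b$.

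Substituting into $d\xi^\Delta$, the coefficient of $d\omega_{ij}\wedge d\omega_{jk}$ collapses to $\tfrac{1}{D}\bigl[\mathrm{Re}(\bar z_{ij}z_{jk})-\mathrm{Re}(\bar z_{jk}z_{ij})\bigr]=0$, and by cyclic symmetry the other two off-diagonal coefficients vanish as well, so $d\xi^\Delta=0$ and hence $d\Xi_\vartheta=0$ in the given neighborhood. The main obstacle I anticipate is organizing the implicit differentiation so that the one-parameter indeterminacy in $\nu^\Delta(\omega)$ drops out cleanly: working exclusively with $\nu$-differences is essential, and the cancellation of the free parameter $t$ in the formula for $d(\nu_{jk}^\Delta-\nu_{ki}^\Delta)$ relies crucially on the conjugate closing condition $\bar z_{ij}+\bar z_{jk}+\bar z_{ki}=0$ together with the reality of $|z_{ij}|^2$.
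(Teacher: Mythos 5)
Your proposal is correct and takes essentially the same route as the paper: reduce closedness to the per-triangle symmetry of the mixed partials of the $\nu$-differences and verify this by implicitly differentiating the closing condition~\eqref{eqtriangleom}. Your explicit formula $\partial(\nu_{jk}-\nu_{ki})/\partial\omega_{e'}=-\tfrac{1}{D}\,\mathrm{Re}(\bar z_{ij}z_{e'})$ is a cleaner packaging of the paper's computation (it is equivalent to the cotangent expressions the paper extracts from the same calculation in the proof of Theorem~\ref{theoconcave}), and the symmetry $\mathrm{Re}(\bar z_e z_{e'})=\mathrm{Re}(\bar z_{e'} z_e)$ is exactly the cancellation exhibited there.
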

\begin{proof}
By~\eqref{eqvari}, the condition $d\Xi_\vartheta=0$ is equivalent to 
\[\frac{\partial (\nu_{ij}-\nu_{jk})}{\partial \omega_{ij}} 
-\frac{\partial (\nu_{jk}-\nu_{ki})}{\partial \omega_{ki}} 
=0\] 
for every triangle $\Delta[v_i,v_j,v_k]$ and cyclic permutations of the 
indices.
Differentiating~\eqref{eqtriangleom} by $\omega_{ij}$ and 
$\omega_{ki}$, we see by straightforward calculations that
\begin{multline*}
 \frac{\partial (\nu_{ij}-\nu_{jk})}{\partial \omega_{ij}} 
-\frac{\partial (\nu_{jk}-\nu_{ki})}{\partial \omega_{ki}}  \\
= \frac{\text{Im}\left(w_{ij}\cdot i\text{e}^{i\vartheta}\cdot 
\text{exp}(\text{e}^{i\vartheta}(i\omega_{ij}+
(\nu_{ij}-\nu_{jk})))\cdot \overline{w_{ki}}\cdot
\text{e}^{-i\vartheta}\cdot 
\text{exp}(\text{e}^{-i\vartheta}(-i\omega_{ki} +
(\nu_{ki}-\nu_{jk})))\right)}{\text{Im}
\left(w_{ki}\cdot \text{e}^{i\vartheta}\cdot 
\text{exp}(\text{e}^{i\vartheta}(i\omega_{ki}+
(\nu_{ki}-\nu_{jk})))\cdot \overline{w_{ij}}\cdot
\text{e}^{-i\vartheta}\cdot 
\text{exp}(\text{e}^{-i\vartheta}(-i\omega_{ij} + 
(\nu_{ij}-\nu_{jk})) \right)} \\
\quad +\frac{\text{Im}\left(w_{ij}\cdot\text{e}^{i\vartheta}\cdot 
\text{exp}(\text{e}^{i\vartheta}(i\omega_{ij}+ 
(\nu_{ij}-\nu_{jk}))) \overline{w_{ki}}\cdot
(-i)\text{e}^{-i\vartheta}\cdot 
\text{exp}(\text{e}^{-i\vartheta}(-i\omega_{ki}+
(\nu_{ki}-\nu_{jk})))\right)}
{\text{Im}\left(w_{ki}\cdot \text{e}^{i\vartheta}\cdot 
\text{exp}(\text{e}^{i\vartheta}(i\omega_{ki}+ 
(\nu_{ki}-\nu_{jk})))\cdot \overline{w_{ij}}\cdot
\text{e}^{-i\vartheta}\cdot 
\text{exp}(\text{e}^{-i\vartheta}(-i\omega_{ij}+
(\nu_{ij}-\nu_{jk})))\right) } \\
=
\frac{\text{Im}\left(w_{ij}\overline{w_{ki}}\text{exp}(\text{e}^{i\vartheta}
(i\omega_{ij}+ (\nu_{ij}-\nu_{jk}))+ 
\text{e}^{-i\vartheta} (i\omega_{ki}+
(\nu_{ki}-\nu_{jk})))
(i\text{e}^{
i\vartheta}\cdot \text{e}^{-i\vartheta}- i\text{e}^{-i\vartheta}\cdot 
\text{e}^{i\vartheta})\right)}{\text{Im}\left( w_{ki}\overline{w_{ij}}
\text{exp}(\text{e}^{i\vartheta}(i\omega_{ki}+
(\nu_{ki}-\nu_{jk})) +\text{e}^{-i\vartheta}(-i
\omega_{ij}+ (\nu_{ij}-\nu_{jk})))\right) } \\
=0.
\end{multline*}
This shows that the 1-form $\Xi_\vartheta$ is closed.
\end{proof}

\begin{theorem}\label{theoconcave}
The function ${\mathcal F}_\vartheta$ which is a local integral of $\Xi_\vartheta$ 
is locally concave, that is, 
the second derivative is a negative semidefinite quadratic form. Its 
one-dimensional kernel is  spanned by $(1, 1,\dots, 1)$.
\end{theorem}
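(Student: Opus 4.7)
The plan is to exploit the additive structure $\Xi_\vartheta=\sum_\Delta \Xi^\Delta_\vartheta$, where each one-form $\Xi^\Delta_\vartheta$ depends only on the three edge variables $\omega_{ij},\omega_{jk},\omega_{ki}$ attached to a triangle $\Delta=\Delta[v_i,v_j,v_k]$. Consequently ${\mathcal F}_\vartheta=\sum_\Delta {\mathcal F}^\Delta_\vartheta$ and $\mathrm{Hess}({\mathcal F}_\vartheta)=\sum_\Delta H^\Delta$, with each $H^\Delta$ a symmetric $3\times 3$ block (symmetry follows from the proof of Theorem~\ref{theoclosed}, which is triangle-local). Moreover, adding a common constant $c$ to the three $\omega$'s of $\Delta$ multiplies the left-hand side of~\eqref{eqtriangleom} by $\exp(ic\,\mathrm{e}^{i\vartheta})$, hence leaves the $\nu$'s (and thus the gradient of ${\mathcal F}^\Delta_\vartheta$) invariant, so $(1,1,1)\in\ker H^\Delta$.

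To compute $H^\Delta$ explicitly, I would differentiate~\eqref{eqtriangleom} implicitly: for each $pq\in\{ij,jk,ki\}$,
\[iz_{pq}+\sum_{mn}\frac{\partial\nu_{mn}}{\partial\omega_{pq}}\,z_{mn}=0.\]
Solving these complex linear equations for the real partials, using $z_{ij}+z_{jk}+z_{ki}=0$ together with the law of sines applied to the image triangle $\Delta[z_i,z_j,z_k]$, yields after a short calculation
\[H^\Delta=\begin{pmatrix}-\dfrac{\sin\alpha_k}{\sin\alpha_i\sin\alpha_j}&\cot\alpha_j&\cot\alpha_i\\[1.2ex]\cot\alpha_j&-\dfrac{\sin\alpha_i}{\sin\alpha_j\sin\alpha_k}&\cot\alpha_k\\[1.2ex]\cot\alpha_i&\cot\alpha_k&-\dfrac{\sin\alpha_j}{\sin\alpha_i\sin\alpha_k}\end{pmatrix},\]
where $\alpha_i,\alpha_j,\alpha_k$ are the interior angles of the image triangle (which depend implicitly on $\vartheta$). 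Using $\sin(\alpha_i+\alpha_j)=\sin\alpha_k$ one checks directly that all row sums vanish, confirming $(1,1,1)\in\ker H^\Delta$.

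The core step is showing that $H^\Delta$ is negative definite modulo $(1,1,1)$. Because the rows of $H^\Delta$ sum to zero, for any $v=(v_1,v_2,v_3)\in\R^3$ one has
\[v^\top H^\Delta v=-\cot\alpha_j(v_1-v_2)^2-\cot\alpha_i(v_1-v_3)^2-\cot\alpha_k(v_2-v_3)^2.\]
Setting $A=v_1-v_2$, $B=v_2-v_3$ (so $v_1-v_3=A+B$) and using $\cot\alpha_i+\cot\alpha_j=\sin\alpha_k/(\sin\alpha_i\sin\alpha_j)$, which is equivalent to $\alpha_i+\alpha_j=\pi-\alpha_k$, this becomes $-Q(A,B)$ for a quadratic form $Q$ with positive leading coefficient and discriminant $\cot^2\alpha_i-1/\sin^2\alpha_i=-1<0$. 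Therefore $Q$ is positive definite and $H^\Delta$ is negative semidefinite with $v^\top H^\Delta v=0$ only on $\R\cdot(1,1,1)$. The main subtle point is that this argument works uniformly, including when the image triangle is obtuse and some off-diagonal entries of $H^\Delta$ become negative; the cotangent identity used above is what makes the positive- and negative-sign contributions balance.

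For the global statement, $\mathrm{Hess}({\mathcal F}_\vartheta)=\sum_\Delta H^\Delta$ is a sum of negative semidefinite matrices, hence negative semidefinite. If $\omega^\top\mathrm{Hess}({\mathcal F}_\vartheta)\omega=0$, then $\omega|_\Delta\in\ker H^\Delta=\R\cdot(1,1,1)$ for every triangle $\Delta$, so $\omega$ is constant on the three edges of each triangle. Two adjacent triangles share an edge, so the constants agree; since $T$ is simply (in particular, path-) connected, $\omega$ is globally constant on $E$, giving $\ker\mathrm{Hess}({\mathcal F}_\vartheta)=\R\cdot(1,1,\dots,1)$.
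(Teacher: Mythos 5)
Your proposal is correct and follows essentially the same route as the paper: both reduce the Hessian to the per-triangle cotangent form, with off-diagonal entries $\cot\alpha^{e,\tilde e}$ and diagonal entries forced by the zero row sums, so that it equals $-\sum_{e\cong\tilde e}\cot\alpha^{e,\tilde e}(d\omega_e-d\omega_{\tilde e})^2$, and your explicit block $H^\Delta$ agrees with the paper's computed values $\partial(\nu_{ij}-\nu_{jk})/\partial\omega_{ij}=\cot\alpha_i$ and $\partial(\nu_{ij}-\nu_{jk})/\partial\omega_{ki}=-\cot\alpha_i-\cot\alpha_k$. The only difference is that you prove the per-triangle negative semidefiniteness and the kernel $\R\cdot(1,1,1)$ directly (via the discriminant computation $\cot^2\alpha_i-1/\sin^2\alpha_i=-1$, equivalently $\sum\cot\alpha_p\cot\alpha_q=1$, plus the scaling invariance of \eqref{eqtriangleom} under $\omega\mapsto\omega+c(1,1,1)$), where the paper delegates this step to the argument of Prop.~4.2.4 in the cited work of Bobenko, Pinkall and Springborn.
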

\begin{proof}
The calculations in the proof of Theorem~\ref{theoclosed} show in 
fact that 
$\frac{\partial (\nu_{ij}-\nu_{jk})}{\partial 
\omega_{ij}}= \cot \alpha_i^{\widetilde{\Delta}_{ijk}}$, 
where $\alpha_i^{\widetilde{\Delta}_{ijk}}(\omega)$ is the (positively 
oriented) angle in the triangle $\widetilde{\Delta}_{ijk}=\Delta[z_i,z_j,z_k]$ 
corresponding to the given values of $\omega$. 
Furthermore, by similar calculations we obtain 
$\frac{\partial (\nu_{ij}-\nu_{jk})}{\partial 
\omega_{ki}}=-\cot \alpha_k^{\widetilde{\Delta}_{ijk}} -\cot 
\alpha_i^{\widetilde{\Delta}_{ijk}}$.
This shows by the same arguments as in~\cite[Prop.~4.2.4]{BPS13}, that the 
Hessian of the functional ${\mathcal F}_\vartheta$ is a negative semidefinite 
quadratic form 
$-\sum_{e \cong \tilde{e}} \cot \alpha^{e,\tilde{e}} 
(d\omega_e-d\omega_{\tilde{e}})^2$ where the sum is taken over all pairs of 
edges $e,\tilde{e}$ which are incident to the same triangular face and 
$\alpha^{e,\tilde{e}}$ is the angle between these edges. The one-dimensional 
kernel is spanned by $(1, 1,\dots, 1)$. 
\end{proof}

It is interesting to note that the entries 
of the second derivative of ${\mathcal F}_\vartheta$ (at $\omega=0$) are the same (up to sign) as 
in 
the known cases for circle 
patterns and discrete conformal equivalence, see for example~\cite{BPS13}.

We have now proved the first part of Theorem~\ref{theovari}. In order to use 
the variational principle for computational purposes, it is important that 
maximizers $\omega$ of ${\mathcal F}_\vartheta$ also correspond to 
discrete $\vartheta$-conformal maps, which can be obtained from the values of 
$\omega$ and $\nu(\omega)$ by gluing the corresponding 
triangles according to the given combinatorics.

\begin{theorem}\label{theoconstruct}
 Every maximizer of ${\mathcal F}_\vartheta$ on a simply connected triangulation 
gives rise to a $\vartheta$-conformal map.
\end{theorem}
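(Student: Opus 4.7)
\emph{Plan.} My plan is to deduce from first-order criticality of $\mathcal{F}_\vartheta$ at a maximizer $\omega^{*}$ the $\vartheta$-conformality condition on every interior edge, and then to build the discrete immersion $F\colon T\to\widehat{T}$ by inductively gluing triangles, using simple connectedness of $T$ together with the critical point equations to handle the closing of each interior flower.

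Any maximizer $\omega^{*}$ of the locally concave functional $\mathcal{F}_\vartheta$ (Theorem~\ref{theoconcave}) is in particular a critical point. By Theorem~\ref{theoclosed} and formula~\eqref{eqvari}, the vanishing of $\partial\mathcal{F}_\vartheta/\partial\omega_{e}$ for every interior edge $e=[v_i,v_j]$ yields precisely~\eqref{eqcondnu}, which by the derivation in Subsection~5.2 is equivalent to the $\vartheta$-conformality condition~\eqref{eqdefdiscconf} for the cross-ratio on that edge.

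To build $F$ I would fix a root triangle $\Delta_{0}$ and a placement of its image in $\C$; the $\omega$-values on its edges together with the closing relation~\eqref{eqtriangleom} determine the $\nu^{\Delta_{0}}$'s up to the common constant that is absorbed by the chosen placement, and hence $F|_{\Delta_{0}}$. Any unvisited triangle $\Delta'$ sharing an edge with a previously placed triangle is then placed by reading off the image of the shared edge and using~\eqref{eqtriangleom} to locate the new vertex. Since $T$ is simply connected, the only possible obstructions to well-definedness of $F$ come from loops in the dual graph generated by cycles of triangles around interior vertices.

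The main obstacle, and the heart of the argument, is therefore to verify that every interior flower closes. At an interior vertex $v_{0}$ with cyclically ordered neighbours $v_{1},\dots,v_{n}$, I would express this closing via the generalized versions of~\eqref{eqq1}--\eqref{eqq2} (cf.\ Remark~\ref{remGen}) for the image cross-ratios $q_{k}=q([v_{0},v_{k}])$. The critical point equation, written in the form $\log q_{k}-\log Q_{k}= i\text{e}^{i\vartheta}\,r_{k}$, forces $r_{k}=\omega_{il}-\omega_{lj}+\omega_{jk}-\omega_{ki}\in\R$ (the imaginary part of the cross-ratio change isolated in Subsection~5.2); summing over the edges incident to $v_{0}$, the $r_{k}$'s telescope to zero, so $\prod_{k} q_{k}=\prod_{k} Q_{k}=1$, inheriting~\eqref{eqq1} from the fact that $T$ is itself an immersion. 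A parallel telescoping argument, applied now to the polygon-closing identity obtained after pushing $z_{0}$ to $\infty$ in the proof of Lemma~\ref{theoq}, should establish the generalized~\eqref{eqq2} for $q$; Remark~\ref{remq} then produces the image flower at $F(v_{0})$ with the prescribed cross-ratios, and simple connectedness lets the flowers be assembled into the desired discrete $\vartheta$-conformal map $F$.
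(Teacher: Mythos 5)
Your overall skeleton agrees with the paper's proof: a maximizer of the concave functional is a critical point, criticality gives \eqref{eqcondnu} on every interior edge, the map is built by gluing triangles along shared edges, and simple connectedness reduces the consistency question to flowers around interior vertices. The divergence, and the gap, is in the flower-closing step. What must be shown is that the developing procedure --- which places each image triangle in the similarity class prescribed by $\omega$ and $\nu^{\Delta}(\omega)$ via \eqref{eqtriangleom}, pinned down by the already placed shared edge --- returns to its initial data after one turn around an interior vertex; equivalently, that the two values obtained for $\nu$ on the starting edge coincide, i.e.\ the scale-rotation monodromy about the image of $v_0$ is trivial. Verifying \eqref{eqq1}--\eqref{eqq2} for the image cross-ratios and then invoking Remark~\ref{remq} does not deliver this: Remark~\ref{remq} only produces \emph{some} configuration realizing the prescribed cross-ratios, unique up to M\"obius transformations and explicitly not guaranteed to be embedded, and M\"obius transformations do not preserve similarity classes of triangles. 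So the existence of such a configuration says nothing, without a further argument, about whether your specific chain of similarity-glued triangles (whose shapes are dictated by $\omega,\nu$) closes up. The missing link is a shape-level identity, e.g.\ writing $\tau_j$ for the ratio of the two edges of the $j$-th image triangle at the central vertex one has $\prod_k q_k=(-1)^n\bigl/\prod_k\tau_k$, and triviality of the monodromy is exactly $\prod_k\tau_k=1$; only with such an identity does your (correct) telescoping $\sum_k r_k=0$, hence $\prod_k q_k=\prod_k Q_k$, force the flower to close. You never establish this link, and the appeal to Remark~\ref{remq} cannot replace it.

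A second, related weakness is the sentence claiming that ``a parallel telescoping argument should establish'' the generalized \eqref{eqq2}: that relation is an alternating polynomial identity in the $q_k$ and does not follow by telescoping from $q_k=Q_k\exp(i\mathrm{e}^{i\vartheta}r_k)$ with $\sum_k r_k=0$ together with \eqref{eqq2} for $Q$; it holds here only because the $q_k$ come from an actual chain of glued triangle shapes, which is again the bookkeeping you skipped. The paper avoids cross-ratios entirely at this stage: it sums the critical-point equations \eqref{eqcondnu} over the interior edges of the flower, the $\nu$-differences telescope, and one gets $\nu_{0,1}=\nu_{0,1}^{*}$ directly --- precisely the monodromy statement needed, after which \eqref{eqcondnu} yields \eqref{eqdefdiscconf}. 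Your $\omega$-telescoping is essentially the same computation in different variables, so the repair is short; but as written, ``the flowers close because Remark~\ref{remq} produces them'' is a gap, not a proof.
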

\begin{proof}
 Note that we implicitly assume that every maximizer $\omega$ lies in the 
``allowed domain'', that is, for every triangle there exist values for the 
depending parameters $\nu$ to the corresponding values of $\omega$. So locally, 
$\omega$ gives rise to a realization of triangles which is an orientation preserving 
embedding.

We prove the claim by successive construction of an immersed triangulation with 
the same combinatorics as the given one and such that~\eqref{eqdefdiscconf} 
holds for all pairs of incident triangles.
Start with any triangle and its realization, that is suitable values 
for $\nu$ on the edges such that~\eqref{eqtriangleom} holds. As the values of 
$\nu$ are unique up to a common additive constant, this corresponds to fixing a 
similarity transformation.

Next we consider an incident triangle. We know by assumption, that suitable 
dependent parameters $\nu$ exist on its edges and fix the freedom for $\nu$ by 
choosing $\nu_{ij}$ to agree with the value from the first triangle on the 
common edge $[v_i,v_j]$. This uniquely determines the remaining two values for 
$\nu$ and thus the embedding of the second triangle. Note that by assumption 
the values of $\nu$ on the boundary edges of the pair of incident triangles 
satisfy~\eqref{eqcondnu}. We can successively continue  this procedure and 
obtain further values for $\nu$ and a sequence of embedded triangles according 
the given combinatorics as long as this sequence never closes up. In this case, 
there might appear two different values for $\nu$ on the same edge (i.e.\ the 
corresponding triangles cannot be glued together).

By assumption, the given triangulation is simply connected. Therefore, it is 
sufficient to show that no ambiguities occur if we consider the sequence of 
incident triangles $\Delta[v_0,v_j,v_{j+1}]$, $j=1,\dots,n$, $n+1=1$, which 
build a flower. Assume without loss of generality that we start with 
$\Delta[v_0,v_1,v_2]$ and given $\nu$'s on the edges and then successively 
determine the value of the $\nu$'s on the other edges. All these values are 
well-defined except possibly the value on the edge $[v_0,v_1]$. Here, 
$\nu_{0,1}$ is initially given, but following our construction gives some value 
$\nu_{0,1}^*$ in the triangle $\Delta[v_0,v_n,v_{1}]$ which might be different. 
But we can sum up condition~\eqref{eqcondnu}, which holds for all pairs of 
incident triangles, in particular 
\begin{align*}
\nu_{0,j}-\nu_{j,j+1} 
+\nu_{j+1,j+2}-\nu_{0,j+2}&=0\qquad \text{for }j=1,\dots, n-2,\\  
\nu_{0,n-1}-\nu_{n-1,n} +\nu_{n,1}-\nu_{0,1}^*&=0 \\
\nu_{0,n}-\nu_{n,1} +\nu_{1,2}-\nu_{0,2}=0. 
\end{align*}
Adding up all these equations leads to 
$\nu_{0,1}-\nu_{0,1}^*=0$. This proves that the values of $\nu$ are 
well-defined on a flower. 

Therefore, starting with any triangle, we can successively determine the values 
of $\nu$ on all edges and thus successively embed all triangles. This gives a 
discrete immersion $F$. Condition~\eqref{eqcondnu} implies that $F$ is also 
$\vartheta$-conformal.
\end{proof}

\subsection{A variational principle with variables at vertices}

For our variational principle explained above, we start from real parameters 
$\omega_e$ on the 
{\em edges} of the triangles (which then determine $\nu^\Delta(\omega)$ 
by~\eqref{eqtriangleom} and finally lead to an immersed
triangulation). Alternatively, one can consider real 
parameters $u_i$ on the {\em vertices} of the triangles. This is similar to the 
logarithmic scale factors considered in~\cite{BPS13} and Remark~\ref{remlog}.
To every edge $[v_i,v_j]$ we then associate the arithmetic mean 
$(u_i+u_j)/2$. 
Denoting the dependent parameters by $\xi=\xi(u)$, we can 
reformulate~\eqref{eqtriangleom} as
\begin{multline}\label{equxi}
w_{ij}\cdot \text{exp}({\text{e}^{i\vartheta}((u_i+u_j)/2+ 
i(\xi_i+\xi_j)/2}))
+w_{jk}\cdot \text{exp}({\text{e}^{-i\vartheta}((u_j+u_k)/2+
i(\xi_j+\xi_k)/2}))\\
+w_{ki}\cdot \text{exp}({\text{e}^{-i\vartheta}((u_k+u_i)/2+ 
i(\xi_k+\xi_i)/2})) =0.
\end{multline}
Analogously as for $\nu(\omega)$, these dependent values $\xi(u)$ are only 
unique up to a common constant and depend on all values of $u$ in the 
triangle $\Delta[v_i,v_j,v_k]$. 

We will now reformulate the condition for discrete $\vartheta$-conformal maps 
in these variables. To this end,
consider a flower as for example in Figure~\ref{Figdefalpha}. Assume that  
values $u_j$ 
are given on the center of the flower $v_0$ and all its incident vertices 
$v_1,\dots,v_N$ such that there exist a solution of~\eqref{equxi} in every 
triangle. Similarly as in~\eqref{eqdefalpha}, the oriented angle between the 
edges $[z_0,z_j]$ and $[z_0,z_{j+1}]$ is 
\[\alpha_{j,j+1}^{\Delta[z_0,v_j,v_{j+1}]}=  
\alpha_{j,j+1}^{\Delta[v_0,v_j,v_{j+1}]} 
+\textstyle\frac{1}{2} (u_{j+1}-u_j)\sin\vartheta+ \textstyle\frac{1}{2} 
(\xi_{j+1}^{\Delta_{0,j,j+1}}-\xi_j^{\Delta_{0,j,j+1}})\cos\vartheta.\]
Recall that the dependent parameters 
$\xi_{j+1}^{\Delta_{0,j,j+1}}-\xi_j^{\Delta_{0,j,j+1}}$ are defined locally in 
the triangles $\Delta_{0,j,j+1}=\Delta[v_0,v_j,v_{j+1}]$ at the vertex $v_0$. 
Note that  $\xi_{j+1}^{\Delta_{0,j,j+1}}-\xi_j^{\Delta_{0,j,j+1}}$
smoothly depends on the given values $u_j,u_{j+1},u_0$ in 
a neighborhood of a solution.
Then this flower can only be embedded if the following closing condition 
holds:
\begin{equation}\label{eqmono2}
 0=\sum_{j=1}^6 (\text{e}^{-i\vartheta}(u_{j+1}-u_j)+ 
i\text{e}^{-i\vartheta} 
(\xi_{j+1}^{\Delta_{0,j,j+1}}-\xi_j^{\Delta_{0,j,j+1}})= 
i\text{e}^{-i\vartheta}\sum_{j=1}^6 
(\xi_{j+1}^{\Delta_{0,j,j+1}}-\xi_j^{\Delta_{0,j,j+1}}),
\end{equation}
where we identify indices modulo $6$. 
Condition~\eqref{eqmono2} is variational, if for any two 
adjacent triangles $\Delta[v_i,v_l,v_j]$ and $\Delta[v_i,v_j,v_k]$ we have
\begin{align*}
 \frac{\partial (\xi_{k}^{\Delta_{i,j,k}}-\xi_j^{\Delta_{i,j,k}})}{\partial 
u_j} +
\frac{\partial (\xi_{j}^{\Delta_{i,l,j}}-\xi_l^{\Delta_{i,l,j}})}{\partial 
u_j} &=
 \frac{\partial (\xi_{i}^{\Delta_{i,j,k}}-\xi_k^{\Delta_{i,j,k}})}{\partial 
u_i} +
\frac{\partial (\xi_{l}^{\Delta_{i,l,j}}-\xi_i^{\Delta_{i,l,j}})}{\partial 
u_i}.
\end{align*}
This in turn is true as straightforward calculations (for example using a 
computer algebra system) show that if $\Delta_{1,2,3}=\Delta[v_1,v_2,v_3]$ is 
an embedded 
non-degenerate triangle with angles $\alpha_1,\alpha_2,\alpha_3\in(0,\pi)$ at 
the corresponding vertices, we have
\begin{align*}
 \frac{\partial}{\partial u_3}(\xi_{3}^{\Delta_{1,2,3}} 
-\xi_2^{\Delta_{1,2,3}})/2 &=\cot \alpha_2 =\frac{\partial}{\partial 
u_1}(\xi_{2}^{\Delta_{1,2,3}}-\xi_1^{\Delta_{1,2,3}})/2.
\end{align*}
This also proves that the Hessian of the functional is again the 
cotan-Laplacian as it is the case for conformal equivalence and for circle 
patterns. Furthermore, this implies  that the 
corresponding functional is locally concave and therefore the maximizers
are unique (up to a global transformation).

\paragraph{Brief review on known cases.}
For the known case of conformally equivalent triangulations 
($\vartheta=0$), the variables $u_i$ are logarithmic scale factors and the 
dependent variables $\xi_i$ correspond to relativ rotations. A corresponding 
concave functional is studied in~\cite[Section~4]{BPS13} and given by 
\begin{align*}
 {\mathcal E}_0(u)&=  -\sum\limits_{\Delta[v_i,v_j,v_k]} \Bigl(
\hat{V}(\log|w_{ij}|+(u_i+u_j)/2, \log|w_{jk}|+(u_j+u_k)/2, 
\log|w_{ki}|+(u_k+u_i)/2) \\
&\qquad\qquad -\pi(u_i+u_j+u_k)/2 
 -\frac{\pi}{2}(\log|w_{ij}| +\log|w_{jk}|+ \log|w_{ki}|+ u_i+u_j+u_k)\Bigr) \\
&\quad - \sum\limits_{v_i} \Theta_i u_i/2,
\end{align*}
where the first sum is taken over all triangles and the 
second sum over all vertices of the given triangulation $T$. 
Note that~\cite{BPS13} investigates twice the negative of this functional (that is 
$E_{T,\Theta,\lambda}(u)= -2 {\mathcal E}_0(u)$).
The function $\hat{V}$ has been defined in~\eqref{defV}. The numbers $\Theta_i$ 
are the desired angles sums at vertices of the image triangulation, so 
$\Theta_i=2\pi$ in our case for all interior vertices $v_i\in V_{int}$. For 
boundary vertices we either fix $\Theta_i$ or $u_i$, 
see~\cite[Section~3.1]{BPS13}.
Note that the functional ${\mathcal E}_0$ is only defined on a part of $\R^{|V|}$, namely where all 
triangle inequalities for the new edge lengths (given by $|w_{ij}|\text{e}^{(u_i+u_j)/2}$) hold. 
But this functional may be extended to a concave continously differentiable functional on 
$\R^{|V|}$, see~\cite[Prop.~4.1.5]{BPS13}.

For the known case of circle patterns ($\vartheta=\pi/2$), the variables $u_i$ 
are the relative counterclockwise rotation angles of the edge stars at $v_i$ and 
the dependent variables $\xi_i$ are logarithmic scale factors. A corresponding 
concave functional is studied in~\cite[Section~6.2]{BS02} and given by
\begin{align*}
 {\mathcal E}_{\frac{\pi}{2}}(u)=  \sum\limits_{\Delta[v_i,v_j]} \bigl(
\ELL(\alpha_k^{\Delta[v_i,v_j,v_k]}+(u_j-u_i)/2) 
+\ELL(\alpha_l^{\Delta[v_i,v_l,v_j]}+(u_i-u_j)/2) \bigr),
\end{align*}
where the sum is taken over all interior edges $[v_i,v_j]$ of $T$ with 
incident, counterclockwise oriented triangles $\Delta[v_i,v_j,v_k]$ and 
$\Delta[v_i,v_l,v_j]$. Note that also this functional is not defined for all possible values of 
$u\in\R^{|V|}$, as we need all angles $\alpha_k^{\Delta[v_i,v_j,v_k]}+(u_j-u_i)/2$ to only take 
values in $(0,\pi)$.

\section{Concluding remarks}

\paragraph{Relations to Ronkin functions, amoebas and coamoebas}
 It is interesting to note that the functional $\hat{V}$ defined in~\eqref{defV}, which reappears 
in the formula for ${\mathcal E}_0$, is the Ronkin function of the linear polynomial $z_1+z_2+z_3$, 
see~\cite[Remark~4.2.7]{BPS13} and the references therein. The domain of definition of $\hat{V}$ is 
an amoeba. 
This notion was introduced in~\cite{GKZ94} by Gelfand, Kapranov and Zelevinsky. The amoeba of a 
complex polynomial $p(z_1,\dots,z_n)$ is defined as the image of the set of zeros of $p$ under the 
map $(z_1,\dots,z_n)\mapsto (\log|z_1|,\dots,\log|z_n|)$. A coamoeba, also 
called Newton polygon or Newton polytope, constitutes a closely related ``dual'' notion, 
see for example~\cite{Pa08} for a nice introductory presentation. Similarly as for amoeba and 
coamoeba, our considerations are based on logarithmic coordinates. Unfortunately, it is still 
unclear why the Ronkin function is related to some of the known functionals and how to connect the 
known explicit formulas for the functionals detailed in Section~\ref{SecReview} into a common 
framework.

\paragraph{Relations to hyperbolic polyhedra and their duals}
Recall that discretely conformally equivalent trianguations are described intrinsically via edge 
lengths. Furthermore, this approach is related to hyperbolic polyhedra, see~\cite{BPS13,Sp17}. But 
it was also noted soon that the theory of discrete conformal equivalence requires changes in the 
combinatorics or else faces the degeneration of triangles (as indicated in the proofs of 
Lemma~\ref{lempart} and Lemma~\ref{lemdiscconf}).

On the other hand, circle pattern are defined using intersection angles which are not intrinsic 
variables. If we stereographically project a circle pattern to the sphere and regard 
this sphere as the boundary of hyperbolic space, we can associate to every circle a hyperbolic 
plane. The interior intersection angles between intersecting circles agree with the dihedral angles 
between the corresponding intersecting hyperbolic planes.  
The ideal hyperbolic polyhedron which is bounded by these planes is determined 
by the dihedral angles between these planes. Instead, one can consider the 
dual polyhedron as defined by Rivin in~\cite{HR93,Ri96} using a suitable Gauss-map. Rivin then 
characterized this dual polyhedron via its metric. 

It remains an open question how the one 
parameter family of discrete $\vartheta$-conformal maps may be related to a corresponding family of 
polyhedra which interpolates between the two known cases.

\subsection*{Funding}
This work was supported by the DFG Collaborative Research Center
TRR~109 ``Discretization in Geometry and Dynamics''.

\section*{Acknowledgments}
The author is grateful to Alexander Bobenko for initiating her research on
conformally symmetric triangulations. 

Furthermore, it is a pleasure to thank 
Boris Springborn, Wai Yeung Lam, and, in particular, Niklas Affolter for 
interesting discussions on discrete $\vartheta$-conformal maps and their 
connections to circle patterns and discrete conformal equivalence. Interesting 
insight into the variational principle for the discrete $\vartheta$-conformal 
maps has been gained during the inspiring Young Investigator's Workshop of the 
DFG Collaborative Research Center TRR~109 ``Discretization in Geometry and 
Dynamics'' in March 2018.

This research was supported by the DFG Collaborative Research Center
TRR~109 ``Discretization in Geometry and Dynamics''.
%
%

%

\bibliographystyle{amsplain}
\bibliography{ConfSymmetric}

\end{document}